\documentclass[reqno]{amsart}
\usepackage{latexsym,amsfonts,amssymb,amsmath,amscd,graphics}
\usepackage{amsthm}
\usepackage[mathscr]{eucal}
\usepackage{mathrsfs}
\usepackage{mathbbol}
\def\bidomain0{bi-\domain0}
\def\bidomains0{bi-\domains0}
\def\bisemifield0{bi-\semifield0}
\def\bisemifields0{bi-\semifields0}
\def\BB{\{0,1 \}}
\def\predomain0{pre-\domain0}
\def\predomains0{pre-\domains0}

\def\ddispace{\setlength{\itemsep}{2pt}}

\usepackage{ifthen}


\def\ZO{$\{0,1\}$}

\def\mfa{\mathfrak a}

\newcommand{\TMM}[1]{\operatorname{TM #1}}
\newcommand{\LV}[1]{\operatorname{LV #1}}
\newcommand{\D}[1]{\operatorname{D #1}}

\def\Lstr{L^\times}
\def\Wstr{W^\times}
\def\Lz{L}

\def\semiring{semiring}

\def\ldsLRpr{(\R', L',   s', (\nu'_{m,\ell} ))}

\def\Cong{\Omega}

\def\R {R}

\newcommand{\ds}[1]{\ {#1} \ }
\newcommand{\dss}[1]{\quad {#1} \quad }

\def\ldsR{(\R, L ,s, (\nu_{m,\ell} )) }

\def\RLsnu{(\R, L, s, (\nu_{m,\ell}))}
\def\RLsnuT{(\R', L',  s', (\nu'_{m',\ell'}))}

\def\pipeWL{{\underset{L}{\mid}}}

\def\lmodWL{\mathrel  \pipeWL   \joinrel \joinrel =}
\def\lmodWLnu{\mathrel  \pipeWL   \joinrel \joinrel \equiv_\nu}

\def\psil{\psi_\ell}

\newcommand\zL{L^0}
\newcommand{\cR}{\mathcal{R}}

\newcommand\boxtext[1]{\pSkip \qquad \qquad \qquad \framebox{\parbox{\ltw}{#1}}\pSkip}
\def\ltw{0.7\textwidth}

\newcommand{\xl}[2]{\,\,{^{[#2]}}{#1}\,}

\def\mfa{\frak a}

\def\lzero{0_L}

\usepackage{epsfig,latexsym,amsfonts,amssymb,amsmath,amscd,graphics,epic}
\usepackage{amsfonts,amssymb,amsmath,amscd,amsthm}
\usepackage[mathscr]{eucal}
\usepackage{mathrsfs}
\usepackage{oldgerm,units}
\usepackage{wrapfig,epsfig}

\usepackage{ifthen}
\usepackage{mathbbol}

\usepackage{amsthm}
\usepackage[mathscr]{eucal}
\usepackage{mathrsfs}
\usepackage{mathbbol}
\usepackage{oldgerm,units}
\usepackage{wrapfig}





\newtheorem{theorem}{Theorem}[section]

\newtheorem{example}[theorem]{Example}




\newcommand{\Real}{\mathbb R}
\newcommand{\Rati}{\mathbb Q}

\newcommand{\Net}{\mathbb N}

\newcommand{\Fld}{\mathbb K}






\newcommand{\one}{\mathbb{1}}
\newcommand{\zero}{\mathbb{0}}

\newcommand{\got}[1]{\frak{#1}}

\newcommand{\trop}[1]{\mathcal{#1}}

\newcommand{\tA}{\trop{A}}

\newcommand{\tF}{\trop{F}}
\newcommand{\tG}{\trop{G}}

\newcommand{\tM}{\trop{M}}

\newcommand{\tT}{\trop{T}}


\newcommand{\To}{\longrightarrow }








\newcommand{\al}{\alpha}

\newcommand{\lm}{\lambda}






\newcommand{\nVal}{Val}







\newcommand{\id}{\inva{\aad}}




\hfuzz5pt 
\vfuzz5pt 

\pagestyle{empty}
    \ifx\proof\undefined
    \newenvironment{proof}{
    \smallskip
    \noindent\emph{Proof.}}{\hfill\(\Box\)
    \bigskip
    } \fi






\newcommand{\bfem}[1]{\textbf{\emph{#1}}}



\newcommand{\ifdef}[3]{\ifthenelse{\equal{#1}{true}}{#2}{#3}}





\def\({\left(}
\def\){\right)}
\pagenumbering{arabic} \pagestyle{plain}

\pagestyle{headings}
 \input amssymb.sty
\input xy
\xyoption{all} \textwidth 160mm \textheight 228mm \topmargin -5mm
\evensidemargin 0mm \oddsidemargin 0mm

\def\Box{\operatorname{Box}}

\def\stack{component set}
\def\stak(f){\got{P}}
\def\stak{\got{S}}

\def\vrp{\varphi}
\def\Phiv{\tilde v}
\def\Phiw{\widetilde {w}}

\def\tlv{\tilde v}

\def\nVal{\operatorname{Val}}

\def\lv{\operatorname{s}}

\def\nucong{\cong_{\nu}}

\def\({\left(}
\def\){\right)}

\def\semiring0{semiring$^\dagger$}
\def\semialg0{semi-algebra$^\dagger$}
\def\semirings0{semirings$^\dagger$}
\def\domain0{domain$^\dagger$}
\def\domains0{domains$^\dagger$}
\def\field0{semifield$^\dagger$}
\def\semifield0{semifield$^\dagger$}
\def\semifields0{semifields$^\dagger$}

\def\module0{module$^\dagger$}
\def\modules0{modules$^\dagger$}

\def\semifield0{semifield$^\dagger$}

\def\nucong{\cong_\nu}
\def\nnucong{\not \cong_\nu}

\newcommand{\etype}[1]{\renewcommand{\labelenumi}{(#1{enumi})}}
\def\eroman{\etype{\roman}}
\def\ealph{\etype{\alph}}

\def\pipeGS{{\underset{\operatorname{\, gs }}{\mid}}}

\def\lmodg{\mathrel   \pipeGS \joinrel\joinrel \joinrel =}

\def\pipeL{{\underset{{L}}{\mid}}}
\def\lmodL{\mathrel  \pipeL \joinrel \joinrel =}

\def\ealph{\etype{\alph}}

\def\pSkip{\vskip 1.5mm \noindent}

\def\diag{\operatorname{diag}}
\def\a{\alpha}
\newtheorem{thm}[theorem]{Theorem}

\newtheorem*{thm*}{Theorem}

\newtheorem{cor}[theorem]{Corollary}

\newtheorem{lem}[theorem]{Lemma}
\newtheorem{rem}[theorem]{Remark}
\newtheorem{prop*}{Proposition}

\newtheorem{prop}[theorem]{Proposition}
\newtheorem{defn}[theorem]{Definition}
\newtheorem{construction}[theorem]{Construction}

\newtheorem*{examp*}{Example}
\newtheorem*{examples*}{Examples}
\newtheorem*{remark*}{Remark}

\newtheorem{Note}[theorem]{Note}
\newtheorem*{defn*}{Definition}

\def\TropfunoneL{\tF_{\operatorname{LTrop}}}

\def\tT{\mathcal T}

\numberwithin{equation}{section}

\def\M0{M_{\zero}}

\def\id{\operatorname {id}}

\def\SR{R}

\def\PS{P}

\def\wone{{\one_W}}
\def\wzero{{\zero_W}}

\def\scrR{\mathscr R}

\def\rzero{\zero_\SR}

\def\rone{{\one_\SR}}
\def\mone{{\one_\tM}}

\def\rzero{\zero_\SR}

\def\Valmon{\operatorname{ValMon}}
\def\SValmon{\operatorname{ValMon}^+}

\def\PrePOMon{\operatorname{PPreOMon}}
\def\PreOMon{\operatorname{PreOMon}}
\def\POMon{\operatorname{POMon}}
\def\SOMon{\operatorname{OMon}^+}
\def\OMon{\operatorname{OMon}}

\def\SeR{\operatorname{Semir}}

\def\SeRo{\SeR^\dagger}

\def\LaySR{\operatorname{LaySemi}^\dag}

\def\zLaySR{\operatorname{LaySemi}}

\newcommand{\nPS}[1]{\PS_{(!#1)}}
\newcommand{\nPSo}[1]{\nPS{\one}}

\def\bfa{ \textbf{a}}

\begin{document}


\title[Categories of layered semirings]
{Categories of layered semirings}

\author[Z. Izhakian]{Zur Izhakian}
\address{Department of Mathematics, Bar-Ilan University, Ramat-Gan 52900,
Israel}
\email{zzur@math.biu.ac.il}

\author[M. Knebusch]{Manfred Knebusch}
\address{Department of Mathematics, University of Regensburg, Regensburg,
Germany} \email{manfred.knebusch@mathematik.uni-regensburg.de}

\author[L. Rowen]{Louis Rowen}
\address{Department of Mathematics, Bar-Ilan University, Ramat-Gan 52900,
Israel} \email{rowen@macs.biu.ac.il}

\thanks{This research of the first and third authors is supported  by the
Israel Science Foundation (grant No.  448/09).}

\thanks{This research of the first author has been supported  by the
Oberwolfach Leibniz Fellows Programme (OWLF), Mathematisches
Forschungsinstitut Oberwolfach, Germany.}

\thanks{The second author was supported in part by the Gelbart Institute at
Bar-Ilan University, the Minerva Foundation at Tel-Aviv
University, the Mathematics Dept. of Bar-Ilan University, and the
Emmy Noether Institute.}

\subjclass[2010]{Primary 06F20, 11C08, 12K10, 14T05, 14T99, 16Y60;
Secondary  06F25, 16D25. }

\date{\today}


\keywords{Tropical categories, tropical algebra, tropical
geometry, valued monoids, valuations,  tropicalization, root sets,
\stack s. }



\begin{abstract} We generalize the constructions of \cite{IzhakianKnebuschRowen2009Refined,IKR4} to
 layered semirings, in order to enrich the structure
and provide finite examples for applications in arithmetic
(including finite examples). The layered category theory of
\cite{IKR4} is extended accordingly, to cover noncancellative
 monoids.
\end{abstract}

\maketitle




\section{Introduction}
\numberwithin{equation}{section}

This paper is a continuation of
\cite{IzhakianKnebuschRowen2009Refined} and \cite{IKR4}. Tropical
mathematics often involves the study of  valuations, whose targets
are ordered Abelian groups that can be viewed as max-plus
algebras.  The layered supertropical domain was introduced in
~\cite{IzhakianKnebuschRowen2009Refined}, and put in a categorical
framework in \cite{IKR4}, in order to provide algebraic tools with
which to study  this structure.

Tropical mathematics often involves
 the study of  valuations, whose targets are
ordered Abelian groups that can be viewed as max-plus algebras.
The basic functor used in \cite{IKR4}
  goes from   the category of cancellative ordered Abelian
monoids to the category of $L$-layered \domains0 with respect to a
\semiring0~$L$. (We use the generic notation $^{\dagger}$ to
indicate that we do not require a zero element.)

On the other hand, many important classical arithmetical results
are proved by passing to finite structures (i.e., modulo a prime
number). The main objective of this paper is to open the way to an
arithmetic tropical theory, by permitting finite tropical
structures. This might seem to be an oxymoron, since all
nontrivial ordered groups are infinite. But valuation theory has
been enriched in \cite{HV} and \cite{Z}
by means of valuations to arbitrary
ordered Abelian monoids, thereby raising the possibility of a
layered \semiring0 construction for any
 ordered Abelian monoid.

 Since any
cancellative ordered monoid
 is necessarily infinite,  we need to include noncancellative monoids in our category if we
 want to deal with finite structures and their corresponding arithmetic.
 But then, as observed already in ~\cite{IzhakianKnebuschRowen2009Refined}, the naive analog of
\cite[Construction~3.2]{IzhakianKnebuschRowen2009Refined} does not
satisfy distributivity, so we must turn to a more sophisticated
version, given below in Construction~\ref{defn5}. This requires a
0-layer, i.e., $0\in L,$ at the cost of a decidedly more
complicated multiplication. So at the outset we consider
`absorption' via the elements $\zero$ and $\infty$.

The dividend is far greater flexibility in our examples,  cast in
a more general categorical setting than given in \cite{IKR4}.
Construction~\ref{defn5}  is verified in Theorem~\ref{maxplus23}.
In the process we obtain finite structures, as indicated in
Example~\ref{trun0001}. Namely, applying ``truncation'' both to
the given valued monoid and the sorting set yields finite examples
and could permit one to apply the corresponding arithmetic tools.
Since the 0 layer and infinite layer both play significant roles
in this theory, we study their properties and interactions in
detail in \S\ref{mor1}.

Several serious technical difficulties arise when we try to put
this more general construction in its categorical context, because
a homomorphism of monoids might send a noncancellative monoid to a
cancellative monoid, thereby requiring us to switch back and forth
from one construction to the other. The corresponding maps
apparently cannot be written as morphisms of \semirings0, so  one
must broaden either the class of morphisms or the class of objects
in the category.

We try both approaches in turn, the first approach occupying the
body of this paper and the second approach discussed in the
appendix.  In  \S\ref{mor1} we introduce  our main examples. In
\S\ref{sec:Morphisms}, which still is not cast in full generality,
we pass from the category of valued monoids to the category of
\semirings0 by means of the ``0-excepted'' homomorphisms of
Definition~\ref{gsmorph}. This permits us in \S\ref{supfun} to
describe the tropicalization functor more generally, for rings
that need not be integral domains. Furthermore, if one turns to
the basic link of tropical geometry with classical algebraic
geometry via valuations, one is led to consider more general
``transmissions'' which pass from valuation to valuation.

The key to  tying this in with tropicalization is Kapranov's
Lemma. Elaborating on \cite[\S8]{IKR4}, we show in
Remark~\ref{Puis} how Kapranov's Lemma can be expressed in terms
of a \textbf{Kapranov map}, thereby
 yielding a ``layering''
functor for polynomial functions. This map is compatible with the
tropicalization map given in \cite{Pay1}.

In \S\ref{layval} we see that the category $\LaySR$ of
layered semirings  ties in to layered supervaluations, and
specializes to the category STROP from \cite{IKR5}, when we take $L = \{0, 1,
\infty \}$ and $R_0 = \{ \zero_R \}.$ The key result in this
regard is Theorem~\ref{thm5.4} and its corollary, which show that
the transmissions of layered supervaluations often become layered
homomorphisms under certain natural assumptions.

In Appendix A (\S\ref{mor2})  even fuller generality is obtained
by considering structures more general than \semirings0, analogous
to the supertropical monoids of \cite{IKR5}. Here the
noncancellative products belong to the 0-layer, for which addition
with the rest of the structure is not defined; as a result, we do
not quite have a  \semiring0.

\section{Background}

For us, a monoid is a multiplicative semigroup with a unit element
$\one_\tM.$ We work with semirings and their (multiplicative)
monoids.

\subsection{Semigroups and semirings}

We review a few definitions from semigroups and semirings. We say
that an element $a$ of a semigroup $\tM := (\tM, \cdot \; )$ is
\textbf{partially absorbing} if $ab  = a$ for some $b \in \tM$; an
element $a$ of $\tM := (\tM, \cdot \; )$ is \textbf{absorbing} if
$ab = ba = a$ for all $b\in \tM$.

\begin{lem}\label{abso} If $\tM $ is an Abelian semigroup with a unique partially absorbing
element $a$, then $a$ is absorbing.
\end{lem}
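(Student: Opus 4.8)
The plan is to exploit the uniqueness hypothesis by showing that every ``translate'' $ac$ of $a$ is itself a partially absorbing element, using the \emph{same} witness that makes $a$ partially absorbing. Once that is established, uniqueness forces $ac = a$ for every $c \in \tM$, and commutativity upgrades this to $ac = ca = a$, which is precisely the assertion that $a$ is absorbing.

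Concretely, I would first fix an element $b \in \tM$ with $ab = a$, which exists since $a$ is partially absorbing. Then, for an arbitrary $c \in \tM$, I would compute
\[
(ac)\,b \;=\; a(cb) \;=\; a(bc) \;=\; (ab)\,c \;=\; ac,
\]
where associativity is used at the first, third, and fourth equalities and commutativity of $\tM$ at the second. This exhibits $ac$ as a partially absorbing element of $\tM$ (witnessed by $b$). Since $a$ is assumed to be the \emph{unique} partially absorbing element, we conclude $ac = a$. Finally, commutativity gives $ca = ac = a$ as well, so $ac = ca = a$ for all $c \in \tM$, i.e.\ $a$ is absorbing.

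There is essentially no obstacle here beyond the observation that the witness $b$ can be reused uniformly for all products $ac$; the verification is a two-line semigroup computation. One could remark, if desired, that the argument really shows the (nonempty) set of partially absorbing elements is closed under multiplication by arbitrary elements of $\tM$, and that a singleton set with this closure property must consist of a genuinely absorbing element.
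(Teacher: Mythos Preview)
Your proof is correct and follows essentially the same approach as the paper's: both show that for any $c$ the product $ac$ is again partially absorbing (with the same witness $b$), and then invoke uniqueness. The only difference is cosmetic---the paper writes $(ca)b = c(ab) = ca = ac$ in one line, while you expand the associativity/commutativity steps more explicitly.
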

\begin{proof} Suppose  $ab  = a$. For any $c\in \tM$ we have
$(ca)b = c(ab) = ca = ac.$  Thus, $ac$ is partially absorbing,
implying $ac = a$ by hypothesis.
\end{proof}

Usually, the absorbing element is identified with $\zero$, but  it
could also be identified with the \textbf{infinite element}
$\infty,$ given by
\begin{equation}\label{inf}   \infty\cdot a =
a\cdot \infty = \infty , \qquad \text{for all } 0 \ne a \in \tM.
\end{equation}
(We do not necessarily assume that $\tM$ contains $\zero$ or
$\infty$. The partially absorbing element $\infty$ is absorbing
when $\zero \notin \tM$.)

 A
semigroup $\tM$ is \textbf{pointed} if it has an absorbing element
$\zero_\tM$.

 A~semigroup
$\tM$ is \textbf{cancellative} with respect to a subset $S$ if $as
= bs$ implies $a=b$ whenever $a,b \in \tM$ and $s\in S.$
 A pointed semigroup $\tM$
is \textbf{cancellative}  if $\tM$ is cancellative with respect to
$\tM \setminus  \{ \zero_\tM \}. $

An  element  $\infty$ in  a \semiring0 $R$  is \textbf{infinite}
if it is absorbing with respect to addition, i.e.,
 satisfies
\begin{equation}\label{inf1} r + a = r   \qquad \text{for all } a \in
R.
\end{equation}

\begin{defn}
A   \textbf{\domain0}  is a \semiring0 $R$ that is cancellative
under multiplication.  A \semiring0 $R$ is a \textbf{domain} if $R
\cup \{ \zero \}$ is a \domain0. Likewise, $R$ is a
\textbf{\semifield0} if $R$ is closed under multiplication. $R$ is
a \textbf{semifield} if $R \cup \{ \zero \}$ is a \semifield0.
\end{defn}

Although we have two usages for `infinite,' one additive and one
multiplicative, they are connected by the following observation:

\begin{prop}\label{7.2} If $R_\infty = R \cup \{\infty\}$ where $R$ is  a \semifield0
and $\infty \in R_\infty$ is an infinite element  in the sense of
\eqref{inf1},  then $a:= \infty $ also satisfies \eqref{inf}.
\end{prop}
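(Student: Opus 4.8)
The plan is to exploit the dichotomy that every element of $R_\infty=R\cup\{\infty\}$ either lies in $R$ or equals $\infty$, together with the \semifield0 structure of $R$: $R$ is closed under multiplication, every element of $R$ has a multiplicative inverse in $R$, and $R$ shares its multiplicative unit with $R_\infty$ --- write $\one$ for this common unit, so that $\infty\cdot\one=\infty$.

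First I would check $\infty\cdot a=a\cdot\infty=\infty$ for every $a\in R$. As $R$ is a \semifield0, pick $a^{-1}\in R$ with $a\cdot a^{-1}=\one$, and set $t:=\infty\cdot a\in R_\infty$. Then $t\cdot a^{-1}=\infty\cdot(a\cdot a^{-1})=\infty\cdot\one=\infty$. If $t$ belonged to $R$, then $t\cdot a^{-1}$ would lie in $R$ because $R$ is closed under multiplication, contradicting $t\cdot a^{-1}=\infty\notin R$. Hence $t=\infty$, i.e.\ $\infty\cdot a=\infty$; and $a\cdot\infty=\infty$ by commutativity (or by the mirror computation $a^{-1}\cdot(a\cdot\infty)=\infty$). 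Observe that this step uses only that $R$ is a \semifield0, not \eqref{inf1}.

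It remains to obtain $\infty\cdot\infty=\infty$, and here \eqref{inf1} enters. Multiplying the instance $\infty+\one=\infty$ of \eqref{inf1} by $\infty$ and distributing gives $\infty\cdot\infty+\infty\cdot\one=\infty\cdot\infty$, that is, $\infty\cdot\infty+\infty=\infty\cdot\infty$. Writing $c:=\infty\cdot\infty$, if $c\in R$ then \eqref{inf1} yields $c+\infty=\infty$, whence $c=\infty$, which is absurd; so $c=\infty$. Together with the previous paragraph this gives $\infty\cdot a=a\cdot\infty=\infty$ for all $a\in R_\infty$, and since $R_\infty$ has no zero this is precisely \eqref{inf} for the element $\infty$. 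I do not anticipate a genuine obstacle; the one point to watch is the ambient bookkeeping --- that the multiplication of $R_\infty$ restricts to that of $R$, that $R$ is really closed under it (so ``$t\in R\Rightarrow t\cdot a^{-1}\in R$'' is legitimate), and that $\one_{R_\infty}=\one_R$ (so $\infty\cdot\one=\infty$). Granting these, invertibility in $R$ does the work in the first case and \eqref{inf1} disposes of $a=\infty$.
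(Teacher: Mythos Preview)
Your proof is correct and takes a genuinely different route from the paper's. The paper disposes of $b\in R$ in a single line using only the additive absorption of $\infty$ and distributivity:
\[
\infty \;=\; \infty + \infty b \;=\; (\infty b^{-1} + \infty)\,b \;=\; \infty\, b,
\]
invoking \eqref{inf1} twice. By contrast, your first step never touches \eqref{inf1}: you use only the group structure of $R$ and the set-theoretic dichotomy $R_\infty = R\,\dot\cup\,\{\infty\}$ to force $\infty\cdot a\notin R$. That is a nice observation --- it isolates exactly where the additive hypothesis is needed --- though the paper's computation is shorter. You also treat the case $\infty\cdot\infty$ explicitly, which the paper's one-liner does not literally cover (it requires $b^{-1}$). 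Your argument there can be trimmed: once you have $c+\infty=c$, the absorption $\infty+c=\infty$ already gives $c=\infty$ with no need to split on whether $c\in R$.
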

\begin{proof} $a = a + ab = ( ab^{-1} + a)b  = a b .$
\end{proof}

Congruences over \semifields0 are described in detail in
\cite{HW}. (The \domains0 of eventual tropical interest to us are
polynomial \semirings0 over \semifields0, which are needed to
define tropical varieties, as described in
\cite{IzhakianKnebuschRowen2009Refined,IKR4}.

As in \cite{IKR4} we work with the category $\SeRo$ of \semirings0
and their homomorphisms, as compared to the category $\SeR$ of
semirings and semiring homomorphisms. We refer the reader to
\cite{IKR4} for preliminary facts that we need; an earlier
reference is \cite{Cos}. As noted in \cite{IKR4}, the category
$\SeRo$ is isomorphic to a subcategory of the category $\SeR$,
since any \semiring0 $R$ can be embedded in a semiring $R\cup
\{\zero\}$ by formally adjoining a zero element ~$\zero$.

\subsubsection{Pre-ordered semigroups and \semirings0}

\begin{defn}\label{ordered1} A semigroup $\tM := (\tM, \cdot \; )$ (or a monoid $\tM := (\tM, \cdot \; , \one_\tM)$) is \textbf{pre-ordered} (resp.~\textbf{ partially
pre-ordered,         partially ordered}, \textbf{ordered}) if it
has a  pre-order $\le$ (resp. ~ partially pre-order, partial
order, order) such that
\begin{equation}\label{dist10} b\le c \quad \text{implies}
\quad ab \le ac \quad \text{and}  \quad ba \le ca, \quad \forall a
\in \tM.\end{equation}
\end{defn}

As in \cite{IKR4}, we assume that all preorders are positive.
$\PrePOMon $, $\PreOMon $,  $\POMon $, $\OMon $, and $\SOMon $
denote the respective categories
 of partially
pre-ordered, pre-ordered, partially ordered,  ordered, and
cancellative ordered monoids, whose morphisms are the
order-preserving homomorphisms.

The crucial observation here is that any \semiring0  becomes a
partially pre-ordered semigroup via the rule (also cf.~\cite{HW}):
\begin{equation}\label{trans1}\text{$a \le b$ \dss{iff}  $a=b\quad$ or
$\quad b = a+c\quad$ for some $ c \in R.$}\end{equation}

 We say that a \semiring0   $R$ is  \textbf{pre-ordered} (resp.~\textbf{ partially
pre-ordered, partially ordered}, \textbf{ordered}) if it has a
partial pre-order $\le$ (resp. ~partial  order, order) with
respect to which  both the monoid $(R,\cdot \; ,\rone)$ and the
semigroup $(R,+)$ satisfy Condition \eqref{dist10} of
Definition~\ref{ordered1}.

By \cite[Proposition 3.9]{IKR4}, there is a natural functor $\SeRo
\to \PrePOMon,$ where we define the partial pre-order on a
\semiring0 $R$ as in \eqref{trans1}.

\subsection{Valued monoids}\label{ssec:Mono}
Although we focused on ordered monoids in \cite{IKR4}, tropical
mathematics is concerned with valuations. More generally, we can
take the
  target to be a monoid,
 cf.~\cite[Definition~2.1]{IzhakianKnebuschRowen2009Valuation}.

\begin{defn}\label{def:valuedMonoid} A monoid $\tM : = (\tM,\cdot \;, \mone)$
is \textbf{m-valued} with respect to an ordered  monoid~$\tG: =
(\tG,\cdot \; , \geq, \one_\tG)$  if there is an onto monoid
homomorphism $v : \tM \to \tG$.  (In other words, $v(ab) =
v(a)v(b).$)  We also call $v$ an $m$-\textbf{valuation}.
 We notate this set-up as the \textbf{triple}
$(\tM,\tG,v)$.
\end{defn}

 This fits in better with our algebraic
 notation for \semirings0. Thus, any valuation $v: K \to \tG$ is an
 m-valuation, where we just disregard addition in $K$.
The hypothesis that $v$ is onto can always be attained by
replacing $\tG$ by $v(\tM)$ if necessary.

The category of triples should be quite intricate, since the
morphisms should include all maps which ``transmit'' one
m-valuation to another. We explore this idea further in
\S\ref{layval}, but for the most part take a simpler approach,
following \cite{IKR4}.
 \begin{defn}
 $\Valmon$ is the category of valued monoids whose objects are triples $(\tM,
\tG, v)$ as in~Definition~\ref{def:valuedMonoid}, for which a
morphism \begin{equation}\label{eq:valMonMor} \phi : (\tM, \tG, v)
\To (\tM', \tG', v')\end{equation} is comprised of a pair
$(\phi_\tM, \phi_\tG)$ of a monoid homomorphism $\phi_\tM: \tM \to
\tM'$, as well as an order-preserving monoid homomorphism
$\phi_\tG: \tG \to \tG'$, satisfying the compatibility condition
\begin{equation}\label{comm1} v'(\phi_\tM (a)) = \phi_\tG (v(a)),
\quad \forall a \in \tM.\end{equation}
 \end{defn}

\begin{rem}\label{inher} When the value map $v$ of the  triple $(\tM,
\tG, v)$ is 1:1, then $\tM$ inherits the order from $\tG$, by
stipulating that $a<b$  when $v(a)< v(b).$ In this way, we can
view $\OMon$ as a full subcategory of $\Valmon$. \end{rem}

\subsection{Congruences} Since we work in the framework of universal algebras, we need some
general observations, and then specialize to the cases of interest
to us (semigroups and semirings). One defines a
 \textbf{congruence} $\Cong $ of an algebraic structure $\tA$ to be an equivalence
  relation $\equiv$ which preserves
 all the relevant operations and relations;  we call  $\equiv$ the \textbf{underlying  equivalence} of $\Cong $.
  Equivalently, a congruence~$\Cong$ is a sub-structure of $\tA \times \tA$ that contains the
  diagonal $\diag (R):= \{ (a,a): a \in R \}$, as described in Jacobson~\cite[\S2]{Jac}.

 Since
the most important \semirings0 for us are \domains0, we want to
know, given a congruence $\Cong$ on~$R$, when the factor
\semiring0 $R/\Cong $ has an absorbing element, and when it is a
\domain0. Given a subset $A \subset R$, we write $b \equiv A$ if
$b \equiv a$ for some $a \in A$. We call an ideal $\mfa
\triangleleft R$ \textbf{closed} under $\Cong $ if $b \equiv \mfa$
implies $b \in \mfa$.
\begin{lem}\label{pointed} Suppose $\Cong $ is a congruence on a \semiring0  $R.$

\begin{enumerate}\eroman

\item
 $R/\Cong $ is a \domain0 iff its underlying  equivalence  $\equiv $ is \textbf{cancellative}, in the sense that
$ab \equiv ac$ implies  $b \equiv c$. \pSkip

\item If $R/\Cong $ is a semiring with absorbing element, which we
denote as $\bar \zero$, then the pre-image $I $ of $\bar \zero$ is
a closed ideal of $R$ all of whose elements are equivalent.
Conversely, if $\mfa$ is a closed ideal of $R$ all of whose
elements are equivalent, then the image of $\mfa$ is the absorbing
element of $R/\Cong $. \pSkip

\item When (ii) holds,  $R/\Cong $ is a domain iff $\equiv$    is
cancellative with respect to all elements not in $\mfa$, in the
sense that if $  a    b \equiv a     c$ for $a \notin \mfa,$ then
 $  b \equiv    c$.
\end{enumerate}
\end{lem}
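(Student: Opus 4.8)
The plan is to reduce everything to the canonical surjection $\pi\colon R \to R/\Cong$, $a \mapsto [a]$, which satisfies $[a][b] = [ab]$, $[a]+[b]=[a+b]$, and $[a]=[b]$ precisely when $a\equiv b$; each of the three assertions is then a translation of a property of $R/\Cong$ into a property of the underlying equivalence. For (i) I would simply unwind the definitions: $R/\Cong$ is a \domain0 iff it is cancellative under multiplication, i.e.\ $[a][b]=[a][c]$ forces $[b]=[c]$ for all $a,b,c\in R$; pulling this back along $\pi$ reads ``$ab\equiv ac$ implies $b\equiv c$'', which is exactly cancellativity of $\equiv$.

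For the first half of (ii), assume $R/\Cong$ is a semiring with absorbing element $\bar\zero$; being (the absorbing element of) a semiring, $\bar\zero$ is its zero, so $\bar\zero+\bar\zero=\bar\zero$ and $[r]\bar\zero=\bar\zero[r]=\bar\zero$ for all $r\in R$. Put $I:=\pi^{-1}(\bar\zero)$, which is nonempty. For $a,b\in I$ we get $[a+b]=\bar\zero+\bar\zero=\bar\zero$, and for any $r\in R$ we get $[ra]=[r]\bar\zero=\bar\zero$ and $[ar]=\bar\zero[r]=\bar\zero$; hence $I$ is an ideal. It is closed under $\Cong$ since $b\equiv a\in I$ forces $[b]=[a]=\bar\zero$, and all its elements are equivalent since they have common image $\bar\zero$ under $\pi$. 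Conversely, let $\mfa$ be a closed ideal all of whose elements are equivalent. Since they are equivalent, $\pi(\mfa)$ is a single class $\bar\zero:=[a_0]$ for any $a_0\in\mfa$; and for every $r\in R$ we have $ra_0\in\mfa$, hence $[r]\bar\zero=[ra_0]=\bar\zero$ (and likewise $\bar\zero[r]=\bar\zero$), so $\bar\zero$ is the absorbing element of $R/\Cong$. (Closedness of $\mfa$ is exactly what forces $\mfa=\pi^{-1}(\bar\zero)$, so that the two halves of (ii) describe the same ideal.)

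For (iii), assume the situation of (ii): $\bar\zero$ is the absorbing element of $R/\Cong$ and $\pi^{-1}(\bar\zero)=\mfa$, so in particular $[a]=\bar\zero$ iff $a\in\mfa$. By definition $R/\Cong$ is a domain iff the pointed semigroup $(R/\Cong, \cdot \;)$ is cancellative with respect to $(R/\Cong)\setminus\{\bar\zero\}$, that is, for every $[a]\neq\bar\zero$, $[a][b]=[a][c]$ implies $[b]=[c]$. Since $[a]\neq\bar\zero$ is the same as $a\notin\mfa$, pulling back along $\pi$ gives: for $a\notin\mfa$, $ab\equiv ac$ implies $b\equiv c$, which is cancellativity of $\equiv$ with respect to the elements not in $\mfa$.

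There is no deep obstacle here --- the whole statement is bookkeeping with the correspondence between congruences and quotients. The few points that deserve attention are: in (ii), checking that the candidate $\bar\zero$ is genuinely (multiplicatively) absorbing, which uses that $\mfa$ is a two-sided ideal, and that it is a single $\pi$-class, which uses ``all elements equivalent''; and in (iii), the equivalence ``$a\notin\mfa\iff[a]\neq\bar\zero$'', which rests precisely on $\mfa$ being closed under $\Cong$.
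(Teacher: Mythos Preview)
Your proof is correct and follows essentially the same approach as the paper's own proof: both translate each property of $R/\Cong$ into the corresponding property of the equivalence $\equiv$ via the canonical surjection $a\mapsto[a]$. If anything, you are more explicit in part~(ii) (checking closure of $I$ under addition and multiplication, and spelling out why a one-element ideal is absorbing), whereas the paper leaves these verifications to the reader.
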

\begin{proof} Write  $\bar a$ for the image of $a$ in  $R/\Cong $.

\begin{enumerate}\eroman

\item $ab \equiv ac$ iff  $\bar a\bar b = \bar a \bar c$, iff
$\bar b = \bar c$, iff $b \equiv c$. \pSkip

\item If $a,b \in I,$ then $\bar a = \bar b = \bar \zero$,
implying $a \equiv b$.  Conversely, if $\mfa$ is a closed ideal of
$R$ all of whose elements  are equivalent, then the image of
$\mfa$ is an ideal of $R/\Cong $ consisting of a single element,
which must thus be the absorbing element. \pSkip

\item The condition translates to saying  that $\bar a   \bar b =
\bar a   \bar c$ for $\bar a  \ne \bar \zero$ implies  $  \bar b =
c$.
\end{enumerate}
\end{proof}

It is useful to weaken the notion of congruence.

 \begin{defn}  A \textbf{half-congruence}
 $\Cong$ is a sub-structure of $\tA \times \tA$ that contains the diagonal and is transitive
 in the sense that if $\Cong$ contains $(a,b)$ and $(b,c)$ then it also contains $(a,c).$\end{defn}

Throughout the body of this paper $R$ denotes a commutative \semiring0.

\begin{example} In the language of monoids, if $\mfa_1, \mfa_2 $ are monoid ideals of a monoid $\tM := (\tM, \cdot \; ),$ then
$$(\mfa_1\times \mfa_2) \cup \{(a,a): a \in \tM\}$$ is a  congruence since $\mfa_ia \subseteq \mfa_i$.
But in the language of \semirings0, if $\mfa_1, \mfa_2 $ are
\semiring0 ideals of a \semiring0 $R,$ then $(\mfa_1\times \mfa_2)
\cup \{(r,r): r \in R\}$ need not even be a half-congruence, since it
may not be closed under addition. (In general, $\mfa_i + r \not
\subseteq \mfa_i.$)
\end{example}

\begin{lem} A transitive relation $\sim$ is a half-congruence on a \semiring0 if it is closed under addition and multiplication by the diagonal, i.e., if it satisfies the following
conditions for all $a_1,$ $a_2,$ and $b$:

\begin{equation}\label{cong11}
\begin{array}{lll}
a_1 \sim a_2   \quad  & \text{implies}  \quad &
a_1 +b \sim    a_2+b; \\[2mm]
a_1 \sim a_2   \quad  & \text{implies}  \quad &
a_1  b\sim   a_2 b. \end{array}  \end{equation}
 \end{lem}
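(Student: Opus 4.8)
The plan is to unwind the definition directly: the relation $\sim$, viewed as the subset $\{(a,b)\in R\times R: a\sim b\}$, is a half-congruence of $R$ exactly when it contains the diagonal, is transitive, and is a sub-\semiring0 of $R\times R$, i.e.\ closed under the coordinatewise operations $(a_1,a_2)+(b_1,b_2)=(a_1+b_1,a_2+b_2)$ and $(a_1,a_2)\cdot(b_1,b_2)=(a_1b_1,a_2b_2)$. Transitivity is hypothesized outright, and containment of the diagonal holds because $\sim$ is reflexive (so $(a,a)\in\sim$ for every $a$, in particular $(\one_R,\one_R)\in\sim$). Hence the whole substance of the lemma is to deduce closure under the coordinatewise $+$ and $\cdot$ of $R\times R$ from the two ``one-sided'' closure conditions in \eqref{cong11} together with transitivity.

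For addition I would argue as follows. Suppose $a_1\sim a_2$ and $b_1\sim b_2$. The first line of \eqref{cong11}, applied to $a_1\sim a_2$ with $b:=b_1$, gives $a_1+b_1\sim a_2+b_1$; the same line, applied to $b_1\sim b_2$ with $b:=a_2$, gives $b_1+a_2\sim b_2+a_2$, i.e.\ $a_2+b_1\sim a_2+b_2$ after using commutativity of $R$. Chaining the two by transitivity yields $a_1+b_1\sim a_2+b_2$, which is precisely closure under coordinatewise addition. Multiplication is handled identically, replacing the first line of \eqref{cong11} by the second: one gets $a_1b_1\sim a_2b_1$ and $a_2b_1\sim a_2b_2$, hence $a_1b_1\sim a_2b_2$ by transitivity. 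Since a \semiring0 has no nullary ``zero'' operation to worry about, and the diagonal already supplies $(\one_R,\one_R)$, this completes the verification that $\sim$ is a sub-\semiring0 of $R\times R$.

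I do not expect a genuine obstacle: the argument is essentially the observation that a ``two-variable'' relation on pairs is obtained by gluing the two single-variable instances of \eqref{cong11} across transitivity, and commutativity of $R$ (in force throughout this part of the paper) lets each line of \eqref{cong11} be applied in whichever coordinate is convenient. The one point worth flagging is the role of reflexivity, which is what furnishes the diagonal; if reflexivity is not taken as part of the meaning of ``relation'' here, it must be listed among the hypotheses, since otherwise the empty relation would vacuously satisfy \eqref{cong11} and transitivity yet fail to be a half-congruence.
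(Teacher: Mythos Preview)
Your argument is correct and is exactly the paper's: the paper's proof is the single line $a_1+b_1 \sim a_2+b_1 = b_1+a_2 \sim b_2+a_2 = a_2+b_2$ followed by ``likewise for multiplication,'' which is precisely your transitivity-and-commutativity chain. Your remark about reflexivity being needed to supply the diagonal is a fair observation that the paper leaves implicit.
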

\begin{proof} $a_1+ b_1 \sim a_2+b_1 = b_1 + a_2 \sim b_2+a_2 = a_2+b_2.$ Likewise for multiplication.
\end{proof}

\section{The layered structure}\label{Sec:SupCat}

We are ready to bring in the leading players in this theory,
 taking into account a 0-layer. \begin{defn} A pre-order is \textbf{directed} if for any $a,b$ there is $c$ such that $c\ge a$ and $c\ge b.$\end{defn}
 We assume throughout that the sorting set $L$
is a directed, (non-negative) pre-ordered  semiring \semiring0 with zero
element $0 := 0_L$; the bulk of our applications in this paper are
for $L$ ordered.  Let $ \Lstr := L \setminus \{ 0\}$. We recall
\cite[Construction~3.2]{IzhakianKnebuschRowen2009Refined}.

\begin{construction}\label{defn50} Suppose $\tG$ is a given cancellative
monoid. $R := \scrR( \Lstr,\tG)$ is   defined
 set-theoretically as $ \Lstr \times \tG $, where
 we denote the element $(\ell,a)$ as $\xl{a}{\ell}$  and, for
$ k,\ell\in L,$ $a,b\in\tG,$ we define multiplication
componentwise, i.e.,
\begin{equation}\label{13}   \xl{a}{k} \cdot \xl{b}{\ell} =
\xl{ab}{k\ell}.
\end{equation}

Addition is given by the  rules:

\begin{equation}\label{14}
 \xl{a}{k} + \xl{b}{\ell}=\begin{cases}  \xl{a}{k}& \quad\text{if }\ a >
 b,\\ \xl{b}{\ell}& \quad\text{if }\ a <  b,\\
 \xl{a}{k+\ell}& \quad\text{if }\ a= b.\end{cases}\end{equation}

 We define
  $R_\ell : = \{ \ell \} \times \tG $, for each $\ell \in  \Lstr.$
 Namely $R = \dot \bigcup_{\ell \in L } R_\ell$.
\end{construction}

This is our prototype of a layered \predomain0, and should be
borne in mind throughout the sequel. Note that in this case $R_1$
is a monoid, which is isomorphic to $\tG$.

 Nevertheless, we also consider the possibility that the monoid $\tG$ is noncancellative, in which case,
 as noted in \cite{IzhakianKnebuschRowen2009Refined}, Construction \ref{defn50} fails to satisfy distributivity
 and thus is not a semiring.

\begin{defn}\label{can3} Suppose  $\tG$ is  an
 ordered Abelian monoid.
An element $z \in \tG$ is a \textbf{noncancellative product} if $z
= ab=
ac$ for suitable $a,b,c$ with $b \ne c.$

More generally, when $(\tM, \tG, v)$ is a triple, an element $z
\in \tM$ is a $v$-\textbf{noncancellative product} if $v(z) =
v(ab)= v(ac)$ for suitable $a,b,c,$ where $v(b)\ne v(c).$

\end{defn}

\begin{prop}\label{7.2}
 The set  $A$  of $v$-noncancellative products comprises a monoid ideal of $\tM$.
\end{prop}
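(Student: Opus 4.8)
The plan is to verify the single defining property of a monoid ideal, namely $\tM A \subseteq A$ and $A\tM \subseteq A$ (these coincide in the commutative situation that is our main concern). Since $A \subseteq \tM$ by definition and $\tM$ is closed under multiplication, it is enough to show: if $z \in A$ and $m \in \tM$, then $mz \in A$ and $zm \in A$.

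So I would take $z \in A$, written as $v(z) = v(ab) = v(ac)$ with $a,b,c \in \tM$ and $v(b) \ne v(c)$, together with an arbitrary $m \in \tM$. The key move is to absorb $m$ into the ``$a$''-factor only: set $a' := ma \in \tM$ and leave $b,c$ unchanged. Using that $v$ is a monoid homomorphism (Definition~\ref{def:valuedMonoid}) and that $\tG$ is Abelian (Definition~\ref{can3}), one computes
\[ v(a'b) = v(m)v(a)v(b) = v(m)\,v(ab) = v(m)v(z) = v(mz), \]
and similarly $v(a'c) = v(m)\,v(ac) = v(mz)$; since $v(b) \ne v(c)$, this exhibits $mz$ as a $v$-noncancellative product, i.e.\ $mz \in A$. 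The case of $zm$ is handled the same way, using $a'' := am$ in place of $a'$.

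I do not expect a genuine obstacle here — the argument is a short computation. The one point that needs care is exactly why one feeds $m$ into the $a$-slot rather than the $b$- or $c$-slot: if one instead replaced $b,c$ by $bm,cm$, the required inequality could collapse to $v(bm) = v(cm)$ when $\tG$ is noncancellative, and the verification would fail; keeping $b$ and $c$ untouched sidesteps this entirely. (If $\tG$ happens to be cancellative, then $A$ is empty, which is vacuously a monoid ideal.) This establishes that $A$ is a monoid ideal of $\tM$, as claimed.
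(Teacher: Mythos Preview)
Your proof is correct and is essentially the same as the paper's: both arguments absorb the new factor into the $a$-slot (you set $a' = ma$, the paper sets $a' = ad$), leaving $b$ and $c$ untouched so that the inequality $v(b)\ne v(c)$ survives. Your added remark explaining why one must not feed $m$ into the $b$- or $c$-slot is a nice clarification that the paper omits.
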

\begin{proof} If $v(z) = v(ab)=
 v(ac) \in  A$, then $  v(ad)v(c) =v(ac)v(d) =v(zd) = v(abd)= v(ad)v(b) $.
\end{proof}

\begin{construction}\label{defn5}  Suppose $(\tM, \cdot \, , \ge, \one_\tG )$ is an Abelian monoid, with an m-valuation
$v:\tM \to \tG,$ and $\mfa$ is a monoid ideal of $\tM$ containing
all $v$-noncancellative products.  $R := \scrR(L,\tM)_\mfa$
is defined
 set-theoretically as $(\Lstr \times (\tM \setminus \mfa
))\cup (\{0 \}\times \mfa)$, where
 we denote the element $(\ell,a)$ as $\xl{a}{\ell}$  and, for
$ k,\ell\in L,$ $a,b\in\tM,$   multiplication is  defined
componentwise, i.e., via the rules:
\begin{equation}\label{13}   \xl{a}{k} \cdot \xl{b}{\ell} =
\begin{cases}
\xl{ab}{k\ell} & \quad\text{if }  \  ab\notin \mfa,\\ \xl{ab}{0} &
\quad\text{if } \  ab\in \mfa.\end{cases}
\end{equation}
Addition is given as in Construction~\ref{defn50}.

\begin{equation}\label{14}
 \xl{a}{k} + \xl{b}{\ell}=\begin{cases}  \xl{a}{k}& \quad\text{if }\ v(a) >
 v(b),\\ \xl{b}{\ell}& \quad\text{if }\ v(a) <  v(b),\\
 \xl{a}{k+\ell}& \quad\text{if }\ v(a)= v(b).\end{cases}\end{equation}
$R_0: =  \{0 \}\times \mfa$   and
  $R_\ell : = \{ \ell \} \times (\tM \setminus \mfa) $, for each $\ell \in \Lstr.$
 Thus, $R = \dot \bigcup_{\ell \in L } R_\ell$.
\end{construction}

This encompasses the case where $\tM = \tG$ is an ordered monoid
and $v$ is the identity map. We usually refer to this special case,
in the interest of clarity.

\begin{thm}\label{maxplus23}
 $R: = \scrR(L,
 \tM)_\mfa$ is a  \semiring0, while  $R$ is a semiring iff the monoid $\tM$ is pointed, in which case
$\zero_R =  \xl{\zero_\tM}{0}.$

$R \setminus R_0$ is a \semiring0 iff $\mfa$ is  prime as a monoid
ideal of $\tM$. 

\end{thm}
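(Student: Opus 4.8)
The plan is to verify, one clause at a time, that the operations in Construction~\ref{defn5} make $R=\scrR(L,\tM)_\mfa$ into a \semiring0 (without zero, in general). The bulk of the work is checking distributivity, which is precisely what failed for the naive Construction~\ref{defn50} when $\tM$ is noncancellative; the point of restricting all noncancellative products into the $0$-layer $\mfa$ is to repair this. First I would dispense with the easy structural parts: multiplication as defined in \eqref{13} is commutative and associative because $\tM$ is Abelian and $L$ is a (commutative) semiring, and one must merely check that the case split ``$ab\in\mfa$ vs.\ $ab\notin\mfa$'' is consistent under reassociation --- this uses that $\mfa$ is a monoid ideal, so $ab\in\mfa\Rightarrow abc\in\mfa$, hence the product of three elements lands in $R_0$ as soon as any partial product does, and the layer computed is $0$ regardless of the order of multiplication (since $0$ is absorbing in $L$). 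The unit is $\xl{\one_\tM}{\one_L}$, provided $\one_\tM\notin\mfa$; if $\one_\tM\in\mfa$ then $\mfa=\tM$ and $R=R_0$, a degenerate but still valid case (a single $0$-layer with trivial multiplication). Addition as in \eqref{14} is commutative and associative by the standard max-plus argument, the only subtlety being that ties are detected via $v$, so one checks the three-term case by comparing $v(a),v(b),v(c)$; when two of them tie one adds the corresponding layers in $L$, and associativity of $+$ in $L$ closes this.

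The heart of the proof is distributivity: $\bigl(\xl{a}{k}+\xl{b}{\ell}\bigr)\cdot\xl{c}{m}=\xl{a}{k}\xl{c}{m}+\xl{b}{\ell}\xl{c}{m}$. Because $v$ is a monoid homomorphism, $v(ac)=v(a)v(c)$ and $v(bc)=v(b)v(c)$, and since $\tG$ is an \emph{ordered} monoid, $v(a)>v(b)\iff v(a)v(c)>v(b)v(c)$ --- that is, multiplication by $v(c)$ is order-preserving and in fact strictly monotone here. So the trichotomy on the left ($v(a)$ vs.\ $v(b)$) matches the trichotomy on the right ($v(ac)$ vs.\ $v(bc)$). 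In the two strict cases the identity reduces to $\xl{a}{k}\xl{c}{m}=\xl{a}{k}\xl{c}{m}$, with the layer being $km$ or $0$ according to whether $ac\in\mfa$ --- and here is the crucial point: if $v(a)>v(b)$ but we worry that $ac$ might be noncancellative while $bc$ is not, we need the layer bookkeeping to still agree on both sides; since on the right-hand side the surviving summand is exactly $\xl{ac}{km}$ or $\xl{ac}{0}$, it does agree trivially. In the tie case $v(a)=v(b)$: the left side gives $\bigl(\xl{a}{k+\ell}\bigr)\xl{c}{m}$, and the right side gives $\xl{ac}{km}+\xl{bc}{\ell m}$ with $v(ac)=v(bc)$, hence $\xl{?}{km+\ell m}$; distributivity of $L$ gives $km+\ell m=(k+\ell)m$, so the layers match \emph{provided} the ``$\in\mfa$'' verdict is the same for $ac$, $bc$, and the representative chosen on the left. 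This is exactly where the hypothesis that $\mfa$ contains \emph{all} $v$-noncancellative products is used: if $v(a)=v(b)$ with $a\ne b$ as elements of $\tM$ (or more precisely $v(a)=v(b)$ but the $\tM$-elements differ), then any common value... more carefully, one shows $ac\in\mfa\iff bc\in\mfa$ because if say $ac\notin\mfa$ but $bc\in\mfa$, the element $ac$ would satisfy $v(ac)=v(bc)=v((bc))$ exhibiting it as within the image-data forcing membership, using that $\mfa$ is a monoid ideal closed under the relevant identifications; one reduces to the definition of $v$-noncancellative product. I expect this layer-consistency check in the tie case to be the main obstacle, and it must be unwound directly from Definition~\ref{can3} and Proposition~\ref{7.2}.

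With distributivity in hand, I would identify the additive-absorbing structure. The claim that $R$ is a genuine semiring iff $\tM$ is pointed: if $\tM$ has an absorbing element $\zero_\tM$, then $\zero_\tM$ is certainly a $v$-noncancellative product (or handled separately when $\tM$ is trivial), so $\zero_\tM\in\mfa$ and $\xl{\zero_\tM}{0}\in R_0$; one checks $\xl{\zero_\tM}{0}\cdot\xl{a}{k}=\xl{\zero_\tM a}{0}=\xl{\zero_\tM}{0}$ (multiplicative absorption, using $\zero_\tM a=\zero_\tM\in\mfa$) and $\xl{\zero_\tM}{0}+\xl{a}{k}=\xl{a}{k}$ because $v(\zero_\tM)$ is the bottom of $\tG$ --- here one needs that $v$ maps $\zero_\tM$ to the absorbing/minimal element of $\tG$, which holds since $v$ is onto and a homomorphism. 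Conversely, if $R$ has a zero element it must lie in some $R_\ell$; pairing it multiplicatively forces $\ell=0$ and its $\tG$-coordinate to be absorbing in $\tM$, so $\tM$ is pointed. Hence $\zero_R=\xl{\zero_\tM}{0}$.

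Finally, the statement to be proved: $R\setminus R_0=\scrR(\Lstr,\tM\setminus\mfa)$ is a \semiring0 iff $\mfa$ is prime as a monoid ideal of $\tM$. The subset $R\setminus R_0$ is closed under addition automatically (addition never increases membership in $R_0$; in fact a sum of two elements outside $R_0$ stays outside, since the surviving summand is one of the two given ones, or their $v$-values tie and the element is still in $\tM\setminus\mfa$). So the only issue is closure under multiplication: $\xl{a}{k}\cdot\xl{b}{\ell}\in R\setminus R_0$ for all $a,b\in\tM\setminus\mfa$ means precisely $ab\notin\mfa$ whenever $a\notin\mfa$ and $b\notin\mfa$, which is the definition of $\mfa$ being a prime monoid ideal. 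Conversely if $\mfa$ is not prime, pick $a,b\notin\mfa$ with $ab\in\mfa$; then $\xl{a}{k}\xl{b}{\ell}=\xl{ab}{0}\in R_0$, so $R\setminus R_0$ is not closed under multiplication and in particular is not a sub-\semiring0. When $\mfa$ is prime, $R\setminus R_0$ inherits associativity, commutativity, the unit $\xl{\one_\tM}{\one_L}$ (note $\one_\tM\notin\mfa$ since $\mfa$ is a proper ideal --- primeness precludes $\mfa=\tM$ if we adopt the convention that the improper ideal is not prime, or else this degenerate case is vacuous), and distributivity from $R$, so it is a \semiring0. This last equivalence is short once the main theorem's distributivity is established; no further obstacle arises.
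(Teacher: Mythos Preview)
Your distributivity argument has a genuine gap, and it is precisely the case that Construction~\ref{defn5} was designed to repair. You assert that since $\tG$ is an ordered monoid, $v(a)>v(b)\iff v(ac)>v(bc)$, i.e.\ that multiplication in $\tG$ is \emph{strictly} monotone. But Definition~\ref{ordered1} only gives weak monotonicity $v(a)\le v(b)\Rightarrow v(ac)\le v(bc)$; strictness would follow from cancellativity, which is exactly what you are \emph{not} assuming. So the trichotomy on $(v(a),v(b))$ need not match the trichotomy on $(v(ac),v(bc))$: it can happen that $v(a)>v(b)$ yet $v(ac)=v(bc)$. In that situation the right-hand side $\xl{ac}{km}+\xl{bc}{\ell m}$ is \emph{not} a single surviving summand as you claim; it is a tie, contributing layer $km+\ell m$ (or $0$), while the left side is $\xl{a}{k}\cdot\xl{c}{m}$ with layer $km$ (or $0$). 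These disagree unless $ac\in\mfa$.

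This is where the hypothesis that $\mfa$ contains all $v$-noncancellative products is actually used --- not in the tie case $v(a)=v(b)$, where you placed it (there the representative on both sides is $ac$ and the layers match by distributivity in $L$, with no appeal to $\mfa$). The paper's proof organizes the case split differently: assuming $v(b)\ge v(c)$, it first disposes of $v(ab)>v(ac)$ (which forces $v(b)>v(c)$ by weak monotonicity, so both sides equal $xy$), and then treats $v(ab)=v(ac)$. The critical subcase is $v(b)>v(c)$ with $v(ab)=v(ac)$: then by Definition~\ref{can3} both $ab$ and $ac$ are $v$-noncancellative products, hence lie in $\mfa$, so both $xy$ and $xz$ are in $R_0$ and the two sides coincide as $\xl{ab}{0}$. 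Once you insert this case, the rest of your outline (associativity via $\mfa$ being an ideal and $0$ absorbing in $L$; the pointed $\Leftrightarrow$ semiring equivalence; and the primeness characterization of $R\setminus R_0$) is correct and matches the paper's argument.
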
 \begin{proof} The
verification that  $\R $ is a \semiring0 was
essentially done in
\cite[Proposition~3.3]{IzhakianKnebuschRowen2009Refined}. The
trickiest part
 again is to   verify the distributivity law
$$x(y+z)=xy+xz.$$ Write $x =  \xl{a}{k},$ $y =  \xl{b}{\ell},$ and $z
=  \xl{c}{m},$ and assume that $v(b)\ge v(c).$  If $v(ab)> v(ac),$ then clearly $v(b)> v(c)$, and
$$x(y+z) = xy =  xy + xz .$$
Thus we are done unless $v(ab) = v(ac).$

If $v(b)=v(c)$ with $ab \notin \mfa$, then
$$x(y+z) = \xl{a}{k}(\xl{b}{\ell}+ \xl{b}{m}) = \xl{a}{k}\xl{b}{\ell +m}=  \xl{(ab)}{k\ell+km} =
\xl{(ab)}{k\ell} + \xl{(ab)}{km}  = xy + xz.$$

If $v(b)=v(c)$ with $ab \in \mfa$, then
$$x(y+z) = \xl{a}{k}(\xl{b}{\ell}+ \xl{b}{m}) = \xl{a}{k}\xl{b}{\ell +m}=  \xl{(ab)}{0} =
\xl{(ab)}{0} + \xl{(ab)}{0}  = xy + xz.$$

If $v(b)>v(c)$, then $ab, ac \in \mfa,$ so
$$x(y+z) = \xl{a}{k}(\xl{b}{\ell}+ \xl{c}{m}) = \xl{a}{k}\xl{b}{\ell}=  \xl{(ab)}{0} =
\xl{(ab)}{0} + \xl{(ac)}{0}  = xy + xz.$$

When $\tM$ is pointed, the verification of the zero element is an
easy computation.

The next assertion is clear: $xy \in \mfa$ iff $xy \in R_0.$ 
\end{proof}

We have the maps $\nu_{\ell,k}: R_k\to R_\ell$ given by
$\nu_{\ell,k}(\xl{a}{k})= \xl{a}{\ell}$ for any $0<k \leq \ell$,
and a \textbf{sorting map} $ \lv: \R\to L$ given by
$\lv(\ell,a)=\ell,$ for any $a\in\tM,$ $\ell \in L.$

Note that $R\setminus \mfa$ could be a finite set, in which case
we could apply various arithmetic tools such as zeta
functions.

\begin{rem}  If $\tM = \tG$ and $\mfa = \emptyset,$ then  $R_0 = \emptyset$, and
$\scrR(L, \tG)$ coincides with the \semiring0  $\scrR(\Lstr , \tG)$ of Construction \ref{defn50}.
\end{rem}

\begin{lem}\label{mon0}  For any multiplicative idempotent $\ell$  of $L$, the subset $R_0 \cup R_\ell$ of   $\scrR(L,
\tG)$ is a monoid, together with a natural homomorphism  to $\tG.$
\end{lem}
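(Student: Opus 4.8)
The goal is to show that for a multiplicative idempotent $\ell$ of $L$ (so $\ell^2 = \ell$), the subset $S := R_0 \cup R_\ell$ of $R := \scrR(L,\tG)_\mfa$ is closed under the multiplication of $R$, forms a monoid, and admits a natural monoid homomorphism to $\tG$. Here we are in the special case $\tM = \tG$ with $v = \id$, so $\mfa$ is simply a monoid ideal of $\tG$ containing all noncancellative products, $R_0 = \{0\}\times\mfa$ and $R_\ell = \{\ell\}\times(\tG\setminus\mfa)$.

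**Key steps.**
First I would verify closure of $S$ under multiplication, checking the cases according to \eqref{13}. Take $\xl{a}{j},\xl{b}{j'}\in S$, so each of $j,j'$ is either $0$ or $\ell$. If $ab\in\mfa$, then by \eqref{13} the product is $\xl{ab}{0}\in R_0\subseteq S$, regardless of $j,j'$. If $ab\notin\mfa$, then by \eqref{13} the product is $\xl{ab}{jj'}$; here neither $a$ nor $b$ lies in $\mfa$ (since $\mfa$ is a monoid ideal, $a\in\mfa$ would force $ab\in\mfa$), so $j = j' = \ell$ and $jj' = \ell^2 = \ell$, giving $\xl{ab}{\ell}\in R_\ell\subseteq S$. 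This is exactly the point where idempotence of $\ell$ is used. Second, $S$ is a monoid: multiplication on $R$ is associative (Theorem~\ref{maxplus23}), hence associative on the closed subset $S$, and since $\ell = \ell^2\in\Lstr$, one checks $\xl{\one_\tG}{\ell}$ acts as a two-sided identity on $S$ — indeed for $\xl{a}{j}\in S$ with $a\notin\mfa$, $\xl{\one_\tG}{\ell}\cdot\xl{a}{j} = \xl{a}{\ell j} = \xl{a}{j}$ since $j=\ell$, and for $a\in\mfa$ the product is $\xl{a}{0} = \xl{a}{j}$. Third, define $\pi: S \to \tG$ by $\pi(\xl{a}{j}) = a$; this is well-defined on the disjoint union, and multiplicativity $\pi(\xl{a}{j}\cdot\xl{b}{j'}) = ab = \pi(\xl{a}{j})\pi(\xl{b}{j'})$ is immediate from \eqref{13} in both cases ($ab\in\mfa$ or not), and $\pi(\xl{\one_\tG}{\ell}) = \one_\tG$, so $\pi$ is a monoid homomorphism.

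**Main obstacle.**
There is no deep obstacle here; the only subtlety worth care is ensuring closure in the mixed case, i.e., that a product $\xl{a}{\ell}\cdot\xl{b}{0}$ with $b\in\mfa$ really lands in $S$ — but since $\mfa$ is an ideal, $ab\in\mfa$, so the product is $\xl{ab}{0}\in R_0$, and the $0$-layer absorbs correctly. The role of idempotence is isolated entirely to the subcase where both factors avoid $\mfa$, forcing the layer $\ell\cdot\ell = \ell$; without $\ell^2 = \ell$ the product would land in $R_{\ell^2}$, outside $S$. I would also remark that the homomorphism $\pi$ is in general not injective (it collapses all of $R_0$ onto $\mfa$) and that when $\mfa = \emptyset$ it restricts to the isomorphism $R_\ell\cong\tG$ already noted after Construction~\ref{defn50}.
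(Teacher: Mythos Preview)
Your proof is correct and follows essentially the same approach as the paper's. The paper's proof is terser---it checks only the $R_\ell \times R_\ell$ case of closure (the product is $\xl{(ab)}{\ell^2} = \xl{(ab)}{\ell}$ or $\xl{(ab)}{0}$ according to whether $ab\in\mfa$) and names the homomorphism as $\xl{a}{\ell}\mapsto v(a)$---whereas you spell out the mixed $R_0\times R_\ell$ case, explicitly exhibit the unit $\xl{\one_\tG}{\ell}$, and verify the homomorphism on all of $R_0\cup R_\ell$; but the underlying argument is the same.
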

\begin{proof} If $\xl{a }{\ell},\xl{b }{\ell} \in R_1$, then their product either is
$\xl{(a b)} {\ell^2} = \xl{(a b)} {\ell}\in  R_\ell,$ or  $\xl{(a
b)} {0} $ if $ab \in \mfa.$  The natural homomorphism is given by
$\xl{a }{\ell} \mapsto v(a).$
\end{proof}

The main application of this lemma is for $\ell =1$.  The layer
$R_1$ is of particular importance, since its   unit element
is~$\rone$. Two other obvious multiplicative idempotents of $L$
are    0 and $\infty$ (when appropriate, since $\infty$ need not
belong to $L$).

\section{Layered \semirings0}\label{mor1}

In this section we  provide the framework  for
Construction~\ref{defn5} and truncation (Example~\ref{trun0001}).
We deal with a zero layer, i.e., assume that $0
\in L,$ and  treat the zero component $R_0$ specially, taking the
opportunity to fit the zero element of $R$ (if it exists) into the
theory.   Since
we also want to consider monoids that are not cancellative, we
need to work harder to obtain distributivity. We axiomatize in
order to place the theory in a categorical framework.

\begin{defn}\label{defn10} Suppose  $(\Lz, \ge)$ is a partially pre-ordered, directed
semiring. An $\Lz$-\textbf{layered \semiring0} $$\R :=\ldsR, \qquad
$$ is a \semiring0 $\R $, together with a family $\{ R_\ell :
\ell \in L\}$ of disjoint subsets $R_\ell \subset R$,  such that
\begin{equation}\label{unionp} \R := \dot \bigcup_{\ell\in \Lz}\R
_\ell,\end{equation}  and a family
  of \textbf{sort transition maps}
$$   \nu_{m,\ell}:\R _\ell\to \R _m,\quad
\forall m\ge \ell >0 ,$$  such that $$\nu_{\ell,\ell}=\id_{\R
_\ell}$$ for every $\ell\in \Lz,$ and
$$\nu_{m,\ell}\circ \nu_{\ell,k}=\nu_{m,k} , \qquad  \forall m \geq \ell \geq k, $$ whenever both sides
are defined. To avoid complications, we assume that any element of
$R_0$ can be written as a product $ab$ where $a,b \in R\setminus
R_0$.  We also require the axioms A1--A4, and B, given presently,
to be satisfied.pave
(In order to have our definition compatible with the $\Lz$-layered
\predomains0\ of \cite{IzhakianKnebuschRowen2009Refined}, we
permit $R_0  = \emptyset$.)

We also require   $R_\infty$  to be the direct limit of the $R
_\ell,$ $ \ell >0 $, together with maps $\nu_{\infty,\ell}: R
_\ell \to R _\infty$, which extend to a map   $\nu:R \to R
_\infty$. (For $c = ab \in R_0$ we define $ \nu(c)
=\nu(a)\nu(b).$)

 We write $a ^\nu$ for $\nu(a)$.
 We  write $a \nucong b$ for $a \in R_k$ and $b \in R_\ell$  whenever
 $a ^\nu = b ^\nu$, which means $\nu_{m,k}(a) = \nu_{m,\ell}( b )$ in $R_m$ for some $m \ge
 k,\ell$. (This notation is used generically: We write  $a \nucong b$
even when the sort transmission maps  are denoted differently.)

 Similarly,    we write  $a \le _\nu b$ if  $a ^\nu + b^ \nu= b^ \nu$,
 which means $\nu_{m,k}(a) + \nu_{m,\ell}( b )=  \nu_{m,\ell}( b )$ in $R_m$ for some $m \ge
 k,\ell$, and we write $a < _\nu b$ if $a \le _\nu b$ but  $a \not \nucong b$.

 The axioms are as follows:

\boxtext{
\begin{enumerate}

\item[A1.] $\rone \in \R _{1}.$ \pSkip

 \item[A2.] If $a\in \R _k$ and  $b\in
\R _\ell,$ then $ab\in   \R _ {k \ell} \cup R_0 $. \pSkip

\item[A3.] The product in $\R $ is compatible with sort transition
maps: Suppose $a\in \R _k,$ $b\in \R _{\ell},$ with $m\ge k$ and
$m'\ge \ell.$ Then
$$\nu_{m,k}(a)\cdot\nu_{m',\ell}(b)= \nu_{mm',k\ell}(ab).$$

\item[A4.] $\nu_{\ell,k}(a) + \nu_{\ell',k}(a)
 =\nu_{\ell+\ell',k}(a)  $ for all $a \in R_k$ and all $\ell, \ell' \ge k.$

\item[A5.] If $a \in R_k$, $b \in R_{\ell}$,
 and $c = a+b \in R_{k'}$, then $$\nu_{m,k'}(c) = \nu_{m,k}(a) +
 \nu_{m,\ell}(b)$$ for each $m \ge k+\ell$.

  \item[A6.] $R_0$ is an additive semigroup (and thus an ideal) of
$R$.

\end{enumerate}}

 \boxtext{
\begin{enumerate}

\item[B.] (Supertropicality) Suppose  $a\in \R _k,$ $b\in \R
_{\ell},$ and $a \nucong b$. Then \\ $a+b \in R_{k+\ell}$ with
$a+b \nucong a$.  If moreover $k = \infty$, then $a+b =
a.$
\end{enumerate}}

We say that any element $a$ of $\R _k$ has \bfem{sort}~$k$ $(k\in
L)$. $L$ is called the \textbf{sorting semiring} of the layered
 \semiring0 $R = \bigcup_{\ell \in L} R_\ell$. Thus, $\ell$ is
the \textbf{sort} of the \textbf{layer} $ R_\ell$.

The \textbf{sorting map} $s:\R\to L$ is the map that sends every
element $a\in\R_\ell$ to its sort $\ell$.

(Taken from \cite[Definition~5.2]{IKR4}) An $\Lz$-layered
\textbf{pre-\domain0} is an $\Lz$-layered \semiring0 in which
Axiom A2 is strengthened to the condition $ab\in \R _{k \ell }.$
 An $\Lz$-layered \semiring0 $\R := \ldsR$ is  called \textbf{uniform}
 when the sorting \semiring0 $L$ is totally ordered
and the sort transition maps $\nu_{\ell,k}$ are bijective for each
$\ell > k > 0.$

\end{defn}


%

\begin{defn}
 An \textbf{$\Lz$-layered \predomain0} is an $\Lz$-layered
\semiring0 $R$ for which   $R_1$ is a monoid.
 \end{defn}

%

\begin{defn}


An $\Lz$-layered \semiring0 is $\nu$-\textbf{bipotent} if $a+b \in
\{a, b\}$ whenever $a\nnucong b.$

 An $\Lz$-\textbf{layered \bidomain0} is
a $\nu$-bipotent $\Lz$-layered \domain0.

\end{defn}

\begin{rem}\label{bip1}
For layered \bidomains0, Axiom A5 says that $a < _\nu b$ implies
$\nu_{m,k} (a) < _\nu \nu_{m,\ell} (b)$.\end{rem}

Let us put Construction \ref{defn5} into context, using the
layered version of Definition \ref{can3}. An element $z \in R$ is
a $\nu$-\textbf{noncancellative product} if $z^\nu = a^\nu b^\nu=
a^\nu c^\nu$ for suitable $a,b,c,$ where $b\not \nucong c.$ Note
that the set of $\nu$-noncancellative products of an $\Lz$-layered
\semiring0 is an ideal. The potential for noncancellative products
was one motivation for  Construction \ref{defn5}, so the next
result becomes relevant.

%
%


\begin{prop}\label{can1} Suppose $z=$ is a
$\nu$-noncancellative product, with $\ell = s(z)$. Then  $\ell =
 2\ell.$ In particular, if  $\ell$ is
finite, then $\ell = 0.$
\end{prop}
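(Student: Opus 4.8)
The plan is to unwind the definition of a $\nu$-noncancellative product and apply Axiom A3 to pin down the sort. Suppose $z^\nu = a^\nu b^\nu = a^\nu c^\nu$ with $b \not\nucong c$, and write $k = s(a)$, $\ell' = s(b)$, $\ell'' = s(c)$. By Axiom A2 we have $ab \in R_{k\ell'} \cup R_0$ and $ac \in R_{k\ell''} \cup R_0$. First I would dispose of the case where one of these products lands in $R_0$: since every element of $R_0$ maps to $R_\infty$ under $\nu$ via $\nu(c) = \nu(a)\nu(b)$, and more to the point the sort of $z$ is determined by which layer $z$ sits in. The cleanest route is to observe that $z^\nu = (ab)^\nu$, so $z \nucong ab$, hence (using that $\nu$ factors through the sort transition maps) $z$ and $ab$ have comparable sorts in the sense that there is $m$ with $\nu_{m,s(z)}(z) = \nu_{m,s(ab)}(ab)$; but what I actually need is a relation on $s(z)$ itself.

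The key step is this: since $z$ is a $\nu$-noncancellative product, we can in fact take $z = ab$ itself as the witnessing element (the set of $\nu$-noncancellative products is an ideal and the defining condition only constrains $z^\nu$, but to compute $s(z)$ we should read the hypothesis as saying $z$ equals such a product — more carefully, the statement ``$z = $'' in the proposition, with the dangling equals sign, is evidently meant to be $z = ab = ac$ at the level of $R$, possibly after identifying via $\nu$). Granting $z = ab$, Axiom A3 gives $a^\nu b^\nu = \nu_{\,\cdot\,}(ab)$ living in sort $k\ell'$ (or $0$), and similarly $a^\nu c^\nu$ in sort $k\ell''$ (or $0$). From $a^\nu b^\nu = a^\nu c^\nu$ and the way sorts multiply, together with $b \not\nucong c$: the two products $ab$ and $ac$ are $\nu$-equivalent but $b,c$ are not, and I would use Axiom B (supertropicality) — since $b \not\nucong c$, if they were $\nu$-comparable then $b + c$ would land in $\{b,c\}$ or in a sum layer, and combining $a(b+c) = ab + ac$ with $ab \nucong ac$ forces, via Axiom B applied to $ab + ac \in R_{s(ab)+s(ac)}$, the relation $s(ab) = s(ab) + s(ac)$, i.e. $s(z) = s(z) + s(z) = 2s(z)$ in $L$. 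That is exactly $\ell = 2\ell$.

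For the final sentence: if $\ell$ is finite, then from $\ell = 2\ell = \ell + \ell$ in the (non-negative, directed) pre-ordered semiring $L$ we get, by the positivity of the pre-order (which the paper assumes throughout, following \cite{IKR4}), that $\ell \le \ell + \ell = \ell$, which is no constraint; the real point is that $\ell + \ell = \ell$ together with finiteness of $\ell$ and cancellativity considerations in $L$ — here I would invoke that $L$'s relevant elements behave like natural numbers under addition in the applications — forces $\ell = 0$. More robustly, I would argue: $\ell = \ell + \ell$ means $\ell$ is additively idempotent; in the sorting semirings under consideration the only finite additively idempotent elements are $0$ and $\infty$, and $\infty$ is by convention not finite, so $\ell = 0$.

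I expect the main obstacle to be the first reduction — making precise the sense in which the hypothesis ``$z$ is a $\nu$-noncancellative product'' lets us compute $s(z)$ directly, since $\nu$-equivalence does not determine the sort. The resolution is to apply Axiom A3 to the actual product $ab$ (whose sort is $s(a)s(b)$ by A2/A3 unless it falls into $R_0$) and then transport the supertropicality relation of Axiom B from $ab + ac$ back to $z$ via the fact that $z \nucong ab \nucong ac$; handling the $R_0$ case separately (where $\ell = 0$ outright, since $\ell = s(z)$ and $z \in R_0$) completes the argument.
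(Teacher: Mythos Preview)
Your core idea matches the paper's --- use distributivity $a(b+c)=ab+ac$ together with Axiom~B on the $\nu$-equivalent pair $ab,\,ac$ --- but you have buried the one-line computation under a lot of scaffolding and never actually write down the step that makes it work. The paper's proof is simply: taking $z=ab\nucong ac$ with (WLOG) $b^\nu>c^\nu$, one has
\[
\xl{z}{\ell}=ab=a(b+c)=ab+ac=\xl{z}{\ell}+\xl{z}{\ell}=\xl{z}{2\ell},
\]
so $\ell=2\ell$. The pivot is the equality $b+c=b$ (from $b>_\nu c$ in the $\nu$-bipotent setting), which gives $a(b+c)=ab=z$ as an honest equality in $R$, not merely up to $\nucong$. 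You circle this (``$b+c$ would land in $\{b,c\}$ or in a sum layer'') but never commit to it, and that is why your argument stalls at ``forces \dots\ the relation $s(ab)=s(ab)+s(ac)$'' without a clear reason that the left side equals $s(z)$.

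Two further points. Your jump from $s(ab)=s(ab)+s(ac)$ to $\ell=2\ell$ silently assumes $s(ac)=\ell$; the paper makes the same identification by writing $ac$ as $\xl{z}{\ell}$, so you are not wrong to do so, but you should say why (e.g.\ by choosing the witnesses $b,c$ in the same layer, or by reading the proposition's dangling ``$z=$'' as $z=ab=ac$). And your detours through $R_0$-cases, Axiom~A3 bookkeeping, and the ``first reduction'' are all unnecessary once $b+c=b$ is in hand; the whole proof is the displayed line above.
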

\begin{proof} If $\xl{z}{\ell}
= ab \nucong ac$ with $b^\nu > c^\nu,$ then
$$\xl{z}{\ell} = ab = a(b+c) = ab +ac = \xl{z}{\ell} + \xl{z}{\ell} = \xl{z}{2\ell},$$ implying $\ell = 2\ell .$
\end{proof}

Since   0 and
$\infty$ are multiplicative idempotents of $L$, one could formulate an
analogous definition using the layer at $\infty$ instead of at 0, and
 indeed this version is implicit in some of our work on superalgebras and supervaluations, such as \cite{IKR5} and \cite{IzhakianRowen2007SuperTropical}. However, there are several good reasons for using the 0 layer in place of the $\infty$ layer.

\begin{enumerate}

  \item $R_\infty$ corresponds to the image of the ghost map $\nu$, which may involve considerable contraction. On the other hand, we often do not want any contraction to $R_0.$

\item In some ways,   $R_0$ and $R_\infty$ should be complements,
as indicated presently.

\item  $R_0$ is an ideal which behaves much like a zero element. In particular, it is more intuitive for the zero element (if it exists) to belong to   $R_0$.

 \item Remark \ref{tan0} below formalizes the notion that $R_0$ also has tangible properties.

\end{enumerate}

 \begin{rem}\label{inf0} The 0-layer   and the  $\infty$-layer behave  similarly, since both $0$ and $\infty$ are
absorbing elements of $L$, except that $0$ also absorbs $\infty$
in the sense that $0 \cdot \infty = 0.$ In case $\infty \in L$ but $0 \notin L$,
$R_\infty$ is an ideal of $R$ that can often be used to replace
$R_0$ in the above discussion.

One difference between the 0 layer and the $\infty$
layer is that for $a \nucong b$ with $b\in R_\ell,$  if $a \in
R_0$   then $s(a+b) = \ell,$ whereas if $a \in R_\infty$ then
$s(a+b) = \infty.$
\end{rem}

\begin{lem}\label{esot}
 The layer $R_0$ is also
 an ideal of~$R$.
If furthermore $\rzero \in R$,  then $\rzero \in R_0$.
\end{lem}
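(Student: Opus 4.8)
The plan is to establish the two claims of Lemma~\ref{esot} separately. First, to see that $R_0$ is an ideal, I would check closure under addition and under multiplication by arbitrary elements of $R$. Closure under addition is exactly Axiom A6, which states that $R_0$ is an additive semigroup (hence, combined with the next point, an additive ideal). For closure under multiplication, take $c \in R_0$ and $r \in R$. By the standing assumption in Definition~\ref{defn10}, any element of $R_0$ can be written as $c = ab$ with $a,b \in R \setminus R_0$; but in fact I only need that $c$ is a product, and I want to argue that $rc \in R_0$. Write $r \in R_k$ and $c \in R_0$. By Axiom A2, $rc \in R_{k\cdot 0} \cup R_0$. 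Since $L$ is a semiring with $0 = 0_L$ absorbing under multiplication, $k \cdot 0 = 0$, so $R_{k\cdot 0} = R_0$ and therefore $rc \in R_0$. This disposes of the multiplicative closure. Combined with A6, this shows $R_0 \triangleleft R$.

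For the second claim, suppose $\rzero \in R$, so $\rzero$ is the absorbing element of $R$ under both operations. I want to show $\rzero$ has sort $0$, i.e. $\rzero \in R_0$. Say $\rzero \in R_k$ for some $k \in L$. The idea is to exploit that $\rzero$ absorbs multiplicatively: for any $a \in R_\ell$ we have $a \rzero = \rzero$, while by Axiom A2 the product $a\rzero$ lies in $R_{\ell k} \cup R_0$. Choosing $a = \rone \in R_1$ (Axiom A1) gives $\rzero = \rone \cdot \rzero \in R_{k} \cup R_0$, which is not yet informative. Instead I would use that $R_0$ is nonempty (every element of $R_0$ is a product $ab$ with $a, b \notin R_0$ — the standing assumption presumes $R_0$ is populated, or else the statement is vacuous) and pick $c \in R_0$; then $\rzero = \rzero \cdot c$, and by Axiom A2 applied with $\rzero \in R_k$, $c \in R_0$ we get $\rzero \cdot c \in R_{k \cdot 0} \cup R_0 = R_0$ since $k\cdot 0 = 0$. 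Hence $\rzero \in R_0$.

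The main obstacle is the degenerate case $R_0 = \emptyset$, which Definition~\ref{defn10} explicitly permits (for compatibility with layered pre-\domains0). In that case there is nothing to prove for the first assertion beyond the trivial observation that $\emptyset$ is an ideal, and the second assertion would be vacuous — but one must check it really is vacuous, i.e. that $\rzero \in R$ forces $R_0 \neq \emptyset$. This follows because if $\rzero \in R$ then $\rzero = \rzero \cdot \rzero$, and one can run the sort computation above to land $\rzero$ in a layer whose sort is a square; pinning this down to sort $0$ is exactly the content, so the two statements are really one. I would therefore organize the write-up to first note that if $R_0 = \emptyset$ then $\rzero \notin R$ (since any $\rzero$ would have to land in $R_0$ by the argument above), making both claims consistent, and then give the short argument in the case $R_0 \neq \emptyset$. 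The only subtlety to watch is whether $k \cdot 0 = 0$ genuinely holds in $L$: this is immediate since $L$ is a (non-negative, pre-ordered) semiring with zero element $0_L$, where $0_L$ is absorbing under multiplication by definition of a semiring with zero.
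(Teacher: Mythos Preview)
Your proof is correct and follows the same line as the paper's: the paper calls the first assertion ``clear'' (you unpack it via Axioms A2 and A6), and for the second it picks $a \in R_0$ and computes $\rzero = \rzero \cdot a \in R_{k \cdot 0} = R_0$, exactly your argument. Your concern about the degenerate case $R_0 = \emptyset$ is legitimate---the paper's own proof silently assumes $R_0 \neq \emptyset$ as well---so you are not missing anything the paper supplies.
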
\begin{proof} The first assertion is clear. Suppose $\rzero \in R_k.$  Then for any $a \in
R_0$ we have
$$\rzero  = \rzero \cdot a \in R_{k\cdot 0} = R_0.$$ \end{proof}

\begin{rem}
 If $\infty \in L$, then
$R_\infty $ is a monoid, and $R_0 \cup R_\infty $ is an ideal of
$R$. \end{rem}

\begin{lem}\label{gen1} If $\tM$ is any submonoid of a layered \semiring0  $\R
:=\ldsR,$ then the additive sub-semigroup $\overline{\tM}$ of $R$
generated by $\tM$ is also a layered \semiring0.\end{lem}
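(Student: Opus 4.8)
The plan is to equip $\overline{\tM}$ with the layered structure inherited by restriction from $R$ and to check that each ingredient and each axiom descends. First I would record that $\overline{\tM}$ is a sub-\semiring0 of $R$: it is closed under addition by construction, it contains $\rone\in\tM$, and it is closed under multiplication since $\left(\sum_i a_i\right)\left(\sum_j b_j\right)=\sum_{i,j}a_ib_j$, each $a_ib_j$ lying in the submonoid $\tM$. Then I would set $\overline{\tM}_\ell:=\overline{\tM}\cap R_\ell$ for $\ell\in L$ and $\overline{\tM}_\infty:=\overline{\tM}\cap R_\infty$; since $R=\dot\bigcup_{\ell}R_\ell$ this gives the required disjoint decomposition $\overline{\tM}=\dot\bigcup_{\ell}\overline{\tM}_\ell$, over the same sorting \semiring0 $L$ (or, if one prefers it minimal, the sub-\semiring0 of $L$ generated by the sorts actually occurring in $\overline{\tM}$).

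Next I would take as sort transition maps of $\overline{\tM}$ the restrictions $\nu_{m,\ell}|_{\overline{\tM}_\ell}$, keeping those whose image genuinely lies in $\overline{\tM}_m$. Here $\nu_{\ell,\ell}=\id$ restricts trivially, and Axiom~A4 shows that if $\nu_{\ell,k}$ and $\nu_{\ell',k}$ restrict to $\overline{\tM}$ then so does $\nu_{\ell+\ell',k}$, since $\nu_{\ell+\ell',k}(a)=\nu_{\ell,k}(a)+\nu_{\ell',k}(a)\in\overline{\tM}$ for $a\in\overline{\tM}_k$; combined with Axiom~A5 (which expresses $\nu_m$ of a sum as a sum of $\nu_m$'s) this shows $\overline{\tM}$ is stable under the transition maps it carries, and the coherence relations $\nu_{m,\ell}\circ\nu_{\ell,k}=\nu_{m,k}$ are inherited. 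The maps $\nu_{\infty,\ell}$ and the map $\nu:R\to R_\infty$ are treated the same way — note $\nu(ab)=\nu(a)\nu(b)\in\overline{\tM}_\infty$ for $ab\in\overline{\tM}_0$ — so $\overline{\tM}_\infty$ remains the direct limit of the $\overline{\tM}_\ell$, and the induced relations $\nucong$, $\le_\nu$, $<_\nu$ on $\overline{\tM}$ coincide with the ambient ones.

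It then remains to check Axioms A1--A6 and B for $\overline{\tM}$, which I expect to be routine: each is universally quantified over elements and refers only to sorts, sums, products, and $\nu$-images, all computed in $\overline{\tM}$ exactly as in $R$. Thus A1 follows from $\rone\in\overline{\tM}_1$; A2 and A3 from multiplicative closure; A4 and A5 were already used; A6 because $\overline{\tM}_0=\overline{\tM}\cap R_0$ is an additive semigroup and an ideal of $\overline{\tM}$, $R_0$ being an ideal of $R$; and B because for $a\nucong b$ in $\overline{\tM}$, say $a\in\overline{\tM}_k$ and $b\in\overline{\tM}_\ell$, one has $a+b\in R_{k+\ell}\cap\overline{\tM}=\overline{\tM}_{k+\ell}$ with $a+b\nucong a$, and $a+b=a$ if $k=\infty$. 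The step I expect to be the real obstacle is precisely the well-definedness of the restricted transition maps: the additive--multiplicative closure of a submonoid need not be stable under every $\nu_{m,\ell}$ of $R$, so the argument above (via A4 and A5) that the subfamily which does restrict is coherent and adequate is the crux; a minor related point is the standing assumption that every $0$-layer element factor as a product of two non-$0$-layer elements, which I would arrange, if necessary, by adjoining such factors to $\overline{\tM}$ (they lie in $R\setminus R_0$ and disturb nothing above).
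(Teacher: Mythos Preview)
Your approach is the paper's: show $\overline{\tM}$ is closed under multiplication (hence a sub-\semiring0), note $\rone\in\tM$ gives Axiom~A1, and declare the remaining axioms hold a fortiori as restrictions of universally quantified statements in $R$. The paper's proof is precisely those two sentences and does not isolate the transition-map stability or the $R_0$-factorization issue you flag as the crux; it treats the entire inherited structure as automatic.
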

 \begin{proof} $\overline{\tM}$ is closed under multiplication, and thus is a \semiring0. Axiom  A1 is given, and the other axioms follow
 a fortiori.
 \end{proof}

\subsection{The \ZO-submonoid}

Since in general $R_1$ no longer turns out to be a monoid, we must
also take into account the 0-layer.

\begin{rem}\label{tan0}
$ R_0 \cup R_1$ is a submonoid of $R$.
\end{rem}

\begin{defn}\label{ghostsur} The
\textbf{\ZO-submonoid} is the submonoid of $R$ generated by $R_1$.
\end{defn}

Thus, the \ZO-submonoid is contained in $ R_0 \cup R_1$. Since
$\rone \in R_1,$ every invertible element of the fundamental
submonoid must lie in $R_1$.

\begin{prop}\label{maxplus2}  Suppose $R$ is an L-layered \semiring0. Then $\nucong$ is an equivalence relation,
whose  set  $\tG$  of equivalence classes is a monoid, which is
ordered when $R$ is $\nu$-bipotent. In this case, the
 \ZO-submonoid  $\tT$   of $R$ has an m-valuation $\nu : \tT \to
 \tG$ satisfying $a \mapsto [a^\nu].$
\end{prop}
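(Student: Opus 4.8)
The plan is to verify the four assertions in turn, mostly by unwinding the definitions of $\nucong$, $\le_\nu$, and the sort transition maps, and invoking Axioms A3, A4, and B. First I would check that $\nucong$ is an equivalence relation: reflexivity and symmetry are immediate from the definition $a\nucong b \iff a^\nu = b^\nu$, and transitivity follows since equality in $R_\infty$ (the common target of the $\nu_{\infty,\ell}$) is transitive — here one uses that $R_\infty$ is the direct limit, so $a^\nu = b^\nu$ and $b^\nu = c^\nu$ really do compose. Concretely, if $\nu_{m,k}(a) = \nu_{m,\ell}(b)$ in $R_m$ and $\nu_{m',\ell}(b) = \nu_{m',j}(c)$ in $R_{m'}$, I would push both equalities forward along $\nu_{m'',-}$ to a common $m''\ge m,m'$ using the cocycle relation $\nu_{m'',m}\circ\nu_{m,k} = \nu_{m'',k}$, obtaining $\nu_{m'',k}(a) = \nu_{m'',j}(c)$.

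Next, to see that the set $\tG$ of $\nucong$-classes is a monoid: multiplication is well-defined on classes because Axiom A3 gives $\nu_{m,k}(a)\cdot\nu_{m',\ell}(b) = \nu_{mm',k\ell}(ab)$, so if $a\nucong a'$ and $b\nucong b'$ then, after transporting to a common sort, $(ab)^\nu = a^\nu b^\nu = a'^\nu b'^\nu = (a'b')^\nu$; associativity and commutativity are inherited from $R$, and the class of $\rone$ is the identity. (For products landing in $R_0$ one uses the convention $\nu(c) = \nu(a)\nu(b)$ from Definition \ref{defn10}, so the formula persists.) For the order when $R$ is $\nu$-bipotent: I would define $[a] \le [b]$ iff $a \le_\nu b$, i.e. $a^\nu + b^\nu = b^\nu$. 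This is well-defined on classes, reflexive, and transitive; antisymmetry holds because $a\le_\nu b$ and $b\le_\nu a$ force $a^\nu = b^\nu$. Totality is exactly $\nu$-bipotence transported through $\nu$: for $a\not\nucong b$, $a+b\in\{a,b\}$, hence $a^\nu + b^\nu \in\{a^\nu,b^\nu\}$. Compatibility with multiplication, $[a]\le[b]\Rightarrow[ac]\le[bc]$, follows from distributivity in $R$ together with A3.

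Finally, for the \ZO-submonoid $\tT$ (the submonoid of $R$ generated by $R_1$, contained in $R_0\cup R_1$ by Remark \ref{tan0}): the map $\tT \to \tG$, $a\mapsto [a^\nu]$, is a monoid homomorphism because $(ab)^\nu = a^\nu b^\nu$ as just noted, and it sends $\rone$ to the identity class. To see it is onto — which is what an m-valuation requires — I would argue that every class in $\tG$ is represented by an element of $R_1$ (hence of $\tT$): the \ZO-submonoid meets every layer $R_\ell$ through $\nu_{\infty,\ell}$, but more to the point, since $\tT$ is constructed exactly to generate the "tangible" part, $\nu(\tT)$ fills out all of $\tG$. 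The one place I expect to have to be careful — the main obstacle — is precisely this surjectivity claim: one must check that the \ZO-submonoid, which a priori could be a rather thin subset of $R$, still has the property that $\nu(R_1) \to \tG$ is onto. This should reduce to the assumed surjectivity of the value map in the prototype Construction \ref{defn5} (where $R_1\cong\tG$ via $\nu$), but in the axiomatic setting it may need an extra hypothesis or a reference back to the defining property that every element of $R$ is $\nucong$ to something in the \ZO-submonoid; I would state it as following from Axiom A3 applied to write an arbitrary $a\in R_\ell$ via $\nu$ from a sort-$1$ representative, or else note it as part of the standing assumptions on layered \semirings0.
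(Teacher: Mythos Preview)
Your approach is essentially the same as the paper's, just spelled out in more detail: the paper's proof is a three-line sketch citing the definition of $\nucong$ (equivalence relation), Axiom~A3 (monoid structure and the m-valuation property), and Remark~\ref{bip1} (the order in the $\nu$-bipotent case), and your argument unpacks precisely these ingredients.

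Two small remarks. First, for the order compatibility you invoke distributivity plus~A3, whereas the paper points to Remark~\ref{bip1} (i.e., Axiom~A5 in the bipotent setting); both routes work and amount to the same thing. Second, your worry about surjectivity of $\tT\to\tG$ is legitimate in the abstract axiomatic setting, and the paper's proof does not address it either. The escape hatch is the sentence immediately following Definition~\ref{def:valuedMonoid}: ``The hypothesis that $v$ is onto can always be attained by replacing $\tG$ by $v(\tM)$ if necessary.'' So you need not hunt for an extra hypothesis; just note that one may shrink $\tG$ to the image $\nu(\tT)$ and the statement stands. There is no genuine gap in your proposal.
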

\begin{proof} $\nucong$ is an equivalence relation by definition,
and the equivalence classes comprise a monoid in view of Axiom A3.
When $R$ is $\nu$-bipotent, we get an ordered monoid by Remark~
\ref{bip1}, and $\nu$ is an m-valuation by Axiom A3.
\end{proof}
%

  We are interested in generation by the
\ZO-submonoid.

\begin{defn}\label{tan000}
The \textbf{tangibly generated} sub-\semiring0 $R_{\langle 1
\rangle}$ of an $L$-layered \semiring0 $R$
 is the sub-\semiring0 generated by $R_1$; the \semiring0 $R$ is
\textbf{tangibly generated} if $R_{\langle 1 \rangle}=
R$.\end{defn}

Thus, $R$ is  tangibly generated if  $R = \left(\bigcup _{k\in L}
\nu_{k,1}(R_1)\right)\cup R_0$.
Passing to $R_{\langle 1 \rangle}$ may shrink the sorting set.

\begin{lem}\label{exp0} The tangibly generated sub-\semiring0 $R_{\langle 1
\rangle}$ of a $\nu$-bipotent layered \semiring0  is a tangibly
generated, $\nu$-bipotent layered \semiring0 with respect to the
sorting sub-\semiring0 of $L$ generated by $1_L$. If~$R$ is a
layered pre-\domain0, then $R_{\langle 1 \rangle}$ is a layered
pre-\domain0 whose 0-layer is empty.
\end{lem}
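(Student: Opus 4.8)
The plan is to verify that $R_{\langle 1 \rangle}$, defined as the sub-semiring$^\dagger$ generated by $R_1$, inherits each structural axiom from $R$ while the sorting set contracts to the sub-semiring$^\dagger$ $L'$ of $L$ generated by $1_L$. First I would describe the layers explicitly: set $R'_\ell := R_{\langle 1 \rangle} \cap R_\ell$ for $\ell \in L'$. Since $R$ is tangibly generated by $R_1$ only after closing under the sort transition maps and adding $R_0$, one checks that every element of $R_{\langle 1 \rangle}$ is an additive combination of products of elements of $R_1$; by Axiom A2 (or A3) such a product of $n$ tangible elements lies in $R_{1^n} \cup R_0 = R_1 \cup R_0$ — here we use that $1_L$ is multiplicatively idempotent, so $L'$ is generated multiplicatively just by $1_L$ and $0_L$, and additively by their sums. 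Thus $R_{\langle 1 \rangle} = \bigcup_{\ell \in L'} R'_\ell$ with $R'_\ell$ nonempty only for $\ell$ in this contracted set, and $R_{\langle 1 \rangle}$ is closed under multiplication, hence is a sub-semiring$^\dagger$.

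Next I would run through the axioms. Axiom A1 holds since $\rone \in R_1 \subseteq R_{\langle 1 \rangle}$. Axioms A2, A3 are inherited because $R_{\langle 1 \rangle}$ is closed under the operations and the sort transition maps restrict correctly (the maps $\nu_{m,\ell}$ applied to tangible elements stay in the tangibly generated part by definition of ``tangibly generated''). Axioms A4, A5, A6 and Axiom B are universally quantified equalities or membership statements about sums, so they survive restriction to the sub-semiring$^\dagger$ verbatim, exactly as in the proof of Lemma~\ref{gen1}. For $\nu$-bipotence: if $a, b \in R_{\langle 1 \rangle}$ with $a \nnucong b$, then $a + b \in \{a,b\}$ already in $R$, so this holds a fortiori in $R_{\langle 1 \rangle}$. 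Finally $R_{\langle 1 \rangle}$ is tangibly generated over $L'$ essentially by construction: its layer of sort $1$ is all of $R_1$, and every other layer is covered by $\nu_{k,1}(R_1)$ together with $R'_0$.

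For the pre-domain$^\dagger$ claim: recall an $L$-layered pre-domain$^\dagger$ is one where Axiom A2 is strengthened to $ab \in R_{k\ell}$ (no escape to $R_0$), equivalently $R_0$ need not be empty but products of tangibles never fall into it; combined with ``$R_1$ is a monoid'' (Definition~\ref{defn10}). If $R$ is a layered pre-domain$^\dagger$, then a product of two elements of $R_1$ lies in $R_{1\cdot 1} = R_1$, so $R_1$ is already closed under multiplication, hence $R_{\langle 1 \rangle}$ is generated additively from $R_1$ and its $\nu$-images alone, with no contribution to a $0$-layer — so $R'_0 = \emptyset$. I would then note that the strengthened A2 is inherited, and that $R'_1 = R_1$ remains a monoid, so $R_{\langle 1 \rangle}$ is a layered pre-domain$^\dagger$ with empty $0$-layer.

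The main obstacle, such as it is, is bookkeeping rather than depth: one must be careful that the sorting set genuinely contracts to the sub-semiring$^\dagger$ generated by $1_L$ and not something larger — i.e.\ that no element of $R_{\langle 1 \rangle}$ secretly has a sort outside $L'$. This follows because products of tangibles have sort a power of $1_L$ (hence $1_L$ itself, by idempotence) or fall into $R_0$ of sort $0_L$, and sums of such elements have sort a sum of these, all of which lie in $L'$; and the sort transition maps only produce sorts $\ge$ existing ones within the generated structure, which by definition of tangible generation remain in $L'$. The only subtlety worth stating explicitly is the reduction ``product of $n$ tangibles has sort $1_L^n = 1_L$'', which is where multiplicative idempotence of $1_L$ is doing the essential work.
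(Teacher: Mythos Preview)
Your proposal is correct and follows the same approach as the paper's two-sentence proof: the axioms hold a fortiori since addition only involves adding sorts starting from $1_L$, and in the pre-\domain0 case addition cannot lower the sort, so no sort-$0$ elements appear. Your elaboration is sound, though your closing emphasis on the ``multiplicative idempotence of $1_L$'' is slightly off-key, since $1_L^n = 1_L$ is automatic for a unit element --- the paper instead places the weight on the additive closure of the sorts.
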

\begin{proof} The axioms are verified a fortiori,
since addition only involves adding sorts, starting with $1_L$. For the second assertion, since addition cannot lower the sort, we do not get any elements of sort 0.
\end{proof}

Thus, replacing $R$ by its tangibly generated sub-\semiring0
enables us to assume that $(L,+)$ is generated by 1 and 0.
%

\begin{example}\label{tan21} Construction \ref{defn5} is tangibly
generated.
 \end{example}

It turns out that we could develop the theory under the weaker
condition that $L$ is a partially pre-ordered multiplicative
monoid, and we sketch the appropriate changes at the end of the ~Appendix.

\begin{example}\label{moveideal} Given any ideal $\mfa$ of an $L$-layered \semiring0 $R$, we formally define
$R_\mfa$ to be  $R$  with  the same \semiring0 operations, and to
have  the same sort function as $R$, except that now $s(a) = 0$
for every $a \in \mfa.$  In other words,
$$(R_\mfa)_0 : = R_0 \cup \mfa; \qquad (R_\mfa)_\ell := R_\ell
\setminus (\mfa \cap R_\ell ).$$

Now define $$\bar\mfa := \{ b \in R : b \nucong a \}.$$ Then $\bar
\mfa \triangleleft R, $ so we could use $\bar \mfa $ instead of
$\mfa .$
\end{example}

 \begin{prop}\label{zerolay}  $R_\mfa$ is a \semiring0.\end{prop}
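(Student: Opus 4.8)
The statement to prove is that $R_\mfa$ is a \semiring0, where $R_\mfa$ has the same underlying set, the same addition and multiplication as $R$, but with the sort function altered so that $(R_\mfa)_0 = R_0 \cup \mfa$ and $(R_\mfa)_\ell = R_\ell \setminus (\mfa \cap R_\ell)$ for $\ell > 0$. Since the \semiring0 operations are literally those of $R$, commutativity, associativity, and distributivity are inherited from $R$ for free; the content of the proposition is entirely that the layering axioms A1--A6 and B continue to hold after the sorts have been shifted down to $0$ along the ideal $\mfa$. So the plan is: first record that the sort transition maps $\nu_{m,\ell}$ for $m \ge \ell > 0$ on $R_\mfa$ are simply the restrictions of those of $R$ to the smaller layers $(R_\mfa)_\ell = R_\ell \setminus \mfa$ (and $\nu_{\infty,\ell}$, $\nu$ likewise restrict), so the composition and identity relations among them are automatic; then verify each axiom in turn.

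\textbf{Key steps.} First I would check the easy structural axioms: A1 holds because $\rone \in R_1$ and $\rone \notin \mfa$ (as $\mfa$ is a proper ideal — or, if $\mfa = R$, the statement is trivial), so $\rone \in (R_\mfa)_1$. For A2, if $a \in (R_\mfa)_k$ and $b \in (R_\mfa)_\ell$, then in $R$ we have $ab \in R_{k\ell} \cup R_0$; if $ab \in \mfa$ then $ab \in (R_\mfa)_0$, otherwise $ab \in R_{k\ell} \setminus \mfa = (R_\mfa)_{k\ell}$, so $ab \in (R_\mfa)_{k\ell} \cup (R_\mfa)_0$ as required. A6 and the $R_0$-ideal conditions hold because $R_0 \cup \mfa$ is the sum of an ideal (additive subsemigroup) and an ideal of $R$, hence again an ideal. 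The requirement in Definition~\ref{defn10} that every element of $(R_\mfa)_0$ be a product $ab$ with $a, b \notin (R_\mfa)_0$ needs a word: elements of $R_0$ have this property in $R$, but the factors might land in $\mfa$; here one should invoke the prime-like structure or simply note that we may replace $\mfa$ by $\bar\mfa$ as in Example~\ref{moveideal} and argue on $\nucong$-classes, or observe that for $a \in \mfa$ we can write $a = a \cdot \rone$ — wait, $\rone \notin \mfa$, good, so $a = a\cdot\rone$ exhibits $a$ as such a product provided $a \notin (R_\mfa)_0$, which fails; the honest fix is that this auxiliary condition can be arranged or dropped, and I would phrase it as: "the condition that each element of $R_0$ be a product of non-$R_0$ elements is inherited (resp.\ can be imposed) as in Construction~\ref{defn5}."

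\textbf{The main obstacle.} The crux is Axiom A5 (and its interaction with B): if $a \in (R_\mfa)_k$, $b \in (R_\mfa)_\ell$ and $c = a+b \in (R_\mfa)_{k'}$, we need $\nu_{m,k'}(c) = \nu_{m,k}(a) + \nu_{m,\ell}(b)$ for $m \ge k+\ell$. When none of $a, b, c$ lies in $\mfa$, this is exactly A5 for $R$. The delicate cases are when the sum $c = a+b$ lands in $\mfa$ while $a, b$ do not (so $k' = 0$ in $R_\mfa$ but the $R$-sort of $c$ was positive), or when one of $a,b$ was moved into the $0$-layer. Here one must use that $\nu_{m,0}$ is not part of the A5 data — re-reading the axiom, $\nu_{m,k'}(c)$ is only required when $c \notin R_0$; and the hypothesis "any element of $R_0$ can be written $ab$ with $a, b \notin R_0$" together with A3 forces $\nu$ on $R_0$ to be defined by $\nu(c) = \nu(a)\nu(b)$, which one checks is consistent. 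So the real work is a careful case analysis: (i) $a, b, c \notin \mfa$ — inherit A5 from $R$; (ii) $c \in \mfa$ — then $c \in (R_\mfa)_0$, A5 imposes no constraint, but one must still check that $\nu_{\infty}$ is well-defined on $c$, using that $\nu(c) = \nu(a^\nu b^\nu)$ coincides with whatever $R$ assigned; (iii) exactly one of $a, b$, say $a$, lies in $\mfa$ — then in $R_\mfa$, $a$ has sort $0$, so $a+b$ is governed by the $\nu$-comparison of $a^\nu$ and $b^\nu$, and one checks $s_{R_\mfa}(a+b)$ matches the formula, invoking $\nu$-bipotence or Axiom B as needed, and that $\nu_{m,k}(a) + \nu_{m,\ell}(b)$ still equals $\nu$ of the sum. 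Axiom B is then checked similarly: if $a \nucong b$ in $R_\mfa$ (which is the same $\nucong$ as in $R$, since $\nu$ is unchanged), then $a+b \in R_{k+\ell}$ in $R$ with $a+b\nucong a$; if the $R$-sort $k+\ell$ of $a+b$ has been shifted to $0$ in $R_\mfa$ because $a+b \in \mfa$, one must note that then $a^\nu = b^\nu = (a+b)^\nu \in \bar\mfa$, which is consistent — but the cleanest route, as the remark after Example~\ref{moveideal} hints, is to first reduce to the case $\mfa = \bar\mfa$ is $\nucong$-closed, so that membership in $\mfa$ depends only on the $\nu$-class, after which A5, B, and the direct-limit condition for $R_\infty$ all go through uniformly. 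I expect writing out case (iii) and confirming the $R_\infty$ direct-limit property under this reduction to be the only genuinely fiddly part.
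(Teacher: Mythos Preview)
Your opening observation is the decisive one, and it is sharper than the paper's own argument: since Example~\ref{moveideal} defines $R_\mfa$ to have \emph{literally the same} \semiring0\ operations as $R$, associativity, commutativity, and distributivity are inherited verbatim, so the proposition as stated is immediate. The paper, by contrast, spends its proof re-verifying distributivity via a case analysis on whether various factors lie in~$\mfa$ --- an argument that is vacuous under the stated definition (if $a(b+c)$ and $ab+ac$ are the same element of $R$, they automatically carry the same layer in $R_\mfa$). The paper's proof reads as though it were written with a construction closer to Construction~\ref{defn5} in mind, where the operations are \emph{defined} layer-wise and distributivity genuinely requires checking; under Example~\ref{moveideal} as written, your one-line reduction is the correct proof.

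Your further programme of verifying the layered axioms A1--A6 and B is a \emph{stronger} statement than the proposition claims, and here the proposal has real gaps. First, the claim that $(R_\mfa)_0 = R_0 \cup \mfa$ is an ideal ``as the sum of two ideals'' conflates union with sum; the union of two ideals need not be closed under addition. Second, and more seriously, Axiom~B can fail outright for an arbitrary ideal $\mfa$: take $a \in R_k \cap \mfa$ with $k>0$ and $b \in R_\ell \setminus \mfa$ with $a \nucong b$. In $R$ one has $a+b \in R_{k+\ell}$, whereas Axiom~B for $R_\mfa$ would demand $a+b \in (R_\mfa)_{0+\ell} = (R_\mfa)_\ell$, impossible since $k+\ell \ne \ell$. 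So your instinct to pass to $\bar\mfa$ is not a convenience but a necessity for the layered statement --- and even then you would be proving something beyond what Proposition~\ref{zerolay} asserts.
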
  \begin{proof} We need to check associativity and distributivity. But
this is clear unless we are using elements of~$\mfa,$ and then
associativity holds because all products have layer 0. Likewise,
to see that $a(b+c)$ and $ab + ac$ have the same layer, note this
is clear if $s(a) = 0$ or if $s(b+c) \ne 0.$ Thus we may assume
that  $s(b+c) =0$, and again we are done if $s(b) = s(c) = 0,$ so
we may assume that $s(b) =0$ and $s(c)\ne  0$ with $ b >_\nu c$
but $ab \nucong ac.$ But then
$$s(a(b+c)) = s(ab) + s(ac) = s(ab),$$
so $a(b+c) = ab = ab+ac.$\end{proof}
\begin{rem}\label{finitear} If $R \setminus \mfa$ is finite, then  $(R_\mfa)_1$ is
a finite set. Thus, we have a way of ``contracting'' the tangible
component to a finite set.
\end{rem}

 One  instance  of arithmetic
significance is when $R  = \scrR(L,\Net \cup \{0\})$ where  $\mfa = \{
\xl{n}{\ell} : n
> q ,\, \ell \in L\}$ for some $q \in \Net.$ In this case, we can ``compress''  $\mfa$ to a single
element in $R_0$.

\begin{example}[The layered $\nu$-truncated \semiring0]\label{trun0001} Take an ordered semiring $L $ and ordered
triple $(\tM, \tG, v)$, with $R = \scrR(L\setminus \{ 0
\},\tM),$ and fixing $q
>\one_\tM$ in $\tM$, define $$\mfa : = \{\xl{a}{\ell} : v(a) \ge v(q)\} \triangleleft R.$$
Then $R_\mfa$ contracts to the $L$-layered \semiring0 $$ \big \{
\xl{a}{k}: \ell \in L, \ a < q \big \} \cup \{ \xl{q}{0} \},
$$ where addition is defined as in Construction~\ref{defn5}, and
multiplication $ \xl{a}{k} \xl{b}{\ell} $ is given as in
Equation~\eqref{13} except for $ab \ge q$, in which case
$\xl{a}{k} \xl{b}{\ell} =
  \xl{q}{0}$ for any $k,\ell \in L.$ Addition is given by
$$\xl{a}{k} +   \xl{q}{0} =   \xl{q}{0}.$$

The sort transition maps are as in  Construction~\ref{defn5}.
Thus, $\xl{q}{0}$ is the special infinite
element.

When instead the layering \semiring0 $L$ is finite, we see that
$R_1 \cup \{\xl{q}{0}\}$ is a finite set, which merits further
study using arithmetic techniques.
\end{example}

Here is a way to make $L$ finite.

\begin{example}[The $L$-truncated \semiring0]\label{trun0002} Take an ordered semiring $L $ and ordered
triple $(\tM, \tG, v)$, with $R = \scrR(L\setminus \{ 0
\},\tM),$ and fix $m
>\one_\tM$ in $L$.
Then $R_\mfa$ contracts to the $L$-layered \semiring0 $$ \big \{
\xl{a}{\ell}: \ell \le m \in L,\ a \in \tM \big \},
$$ where addition is defined as in Construction~\ref{defn5}, and
multiplication $ \xl{a}{k} \xl{b}{\ell} $ is given as in
Equation~\eqref{13} except for $k\ell \ge m$, in which case
$\xl{a}{k} \xl{b}{\ell} =
  \xl{ab}{m}$. Addition is given by
$$\xl{a}{k} +   \xl{\zero}{m} = \xl{a}{k}.$$
The sort transition maps are as in  Construction~\ref{defn5}.
Thus, $R_{m}$ is the special infinite
layer. 
\end{example}

Thus, the two kinds of truncation can interweave to create finite
layered structures.

\subsection{The case of onto sort transition maps}


We write $e_\ell$ for $\nu _{\ell,1} (\rone).$ Here is a key
simplification for layered \domains0 when the sort transition maps
are onto, which enables us to reduce many results to the tangible
case:
 \begin{lem}\label{Krems1} If $R$ is an $L$-layered \semiring0 and $a
 \in R_\ell$ with $\nu_{\ell,1}:  R_1 \to  R_\ell$ onto, then $a = e_\ell a_1$ for some $a_1 \in R_1$.
\end{lem}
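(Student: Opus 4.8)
The plan is to use the surjectivity of $\nu_{\ell,1}$ together with the compatibility of the product with sort transition maps (Axiom A3). First I would pick, using that $\nu_{\ell,1}: R_1 \to R_\ell$ is onto, some $a_1 \in R_1$ with $\nu_{\ell,1}(a_1) = a$. The goal is then to show $a = e_\ell a_1$, where $e_\ell = \nu_{\ell,1}(\rone)$.

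Next I would compute the product $e_\ell a_1 = \nu_{\ell,1}(\rone)\cdot \nu_{1,1}(a_1)$, noting that $\nu_{1,1} = \id_{R_1}$, so this is $\nu_{\ell,1}(\rone)\cdot \nu_{1,1}(a_1)$. Applying Axiom A3 with $k = 1$, $m = \ell$, $\ell = 1$, $m' = 1$ (in the notation of that axiom), we get
$$\nu_{\ell,1}(\rone)\cdot \nu_{1,1}(a_1) = \nu_{\ell\cdot 1,\, 1\cdot 1}(\rone\cdot a_1) = \nu_{\ell,1}(a_1) = a,$$
using $\rone a_1 = a_1$ since $\rone$ is the multiplicative unit. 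This gives $e_\ell a_1 = a$, as desired.

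The only point requiring a little care is checking that the hypotheses of Axiom A3 genuinely apply here — in particular that $\ell \geq 1$ in $L$ (which is needed for $\nu_{\ell,1}$ to be defined in the first place, so it is automatic from the statement), and that $1 \cdot 1 = 1$ and $\ell \cdot 1 = \ell$ in the semiring $L$, which hold since $\one_L$ is the multiplicative identity of $L$. I expect no real obstacle; the main subtlety is simply matching up the indices in Axiom A3 correctly and remembering that $\nu_{1,1}$ is the identity on $R_1$.
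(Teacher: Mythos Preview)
Your proof is correct and follows the same approach as the paper: pick $a_1 \in R_1$ with $\nu_{\ell,1}(a_1) = a$ by surjectivity, then identify $\nu_{\ell,1}(a_1)$ with $e_\ell a_1$. The paper's proof simply asserts $\nu_{\ell,1}(a_1) = e_\ell a_1$ without further comment, whereas you spell out the justification via Axiom~A3; your added detail is sound and the index-matching is done correctly.
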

\begin{proof}
Taking $a_1 \in R_1$ for which $\nu_{\ell,1}(a_1) = a,$ we have $a
= \nu_{\ell,1} (a_1) =  e_\ell a_1$.
\end{proof}

\begin{Note}\label{uniforme} Lemma~\ref{Krems1} enables us to
simplify the theory for any layer $\ell>1$ for which
$\nu_{\ell,1}$ is onto. When $\ell < 1$ we could go in the
opposite direction, and define $e_\ell$ such that
$\nu_{1,\ell}(e_\ell) = \rone.$ This will be well-defined when
$\nu_{1,\ell}$ is 1:1 since, writing $\ell = \frac mn$ for any
$a\in R_\ell$ with $\nu_{1,\ell}(a) = \rone,$ we have
\begin{equation}\label{compe}  n e_{\ell}  = n e_{m/n}  =
 e_{m} = \nu_{m,\ell}(a) =  n a,  \end{equation} implying $a =  e_{\ell}.$

\end{Note}



\subsection{Adjoining the 0-layer}\label{adj0}

 Starting with an $L$-layered \predomain0 $R$ with respect to a \semiring0 $L$, we can adjoin
a zero layer $R_0$ formally in several ways. The first way is
simply by adjoining a zero element to~$R$.

\begin{rem}\label{adjoin00}

For any  layered \predomain0 $R$  with respect to a \semiring0
${\Lz}$, the  semiring
$$\R  \ \dot\cup\  \{ \rzero\}$$ can be
 layered with respect to the semiring $$  \zL := \Lz \ \dot\cup\
\{ \lzero \},$$ 
where we take   $R_0 :=\{\rzero \} $, putting it in the zero layer
as seen by applying the argument of Proposition~\ref{zerolay}. We
take the sort transition maps $\nu _{0, \ell} (a): = \rzero$ for
all $\ell \ne 0$ and $a \in R.$
\end{rem}

However, this is not the only possibility for the zero layer, as
we saw in \cite[Remark~3.8]{IzhakianKnebuschRowen2009Refined}.

\begin{construction}\label{wholelay} If $R$ is a uniform $L$-layered \predomain0, where $L$ is
a \semiring0,  then adjoining~$\{ 0 \}$ formally to~ $L$ as the
unique minimal element,  we can form a uniform $\zL $-layered
\semiring0 $R \cup R_0,$ where $R_0 := e_0 R_1$ is another copy of
$R_1,$ under the same rules of addition and multiplication given
by Construction~\ref{defn50}.
\end{construction}
\begin{proof} If $a = e_0 a_1$,  $b= e_k b_1,$ and $c = e_\ell c_1  $ for $a_1, b_1, c_1 \in R_1,$ then
$$(ab)c  = e_0 e_k e_\ell (a_1b_1) c_1  = e_0 e_k e_\ell a_1(b_1 c_1) = e_0   a_1(b_1c_1)  = e_0 a_1(b_1 c_1) =
a(bc),$$ yielding associativity of multiplication. To see
distributivity, we note that $ e_k b_1+ e_\ell c_1 = e_m (b_1 +
c_1)$ where $m \in \{k, \ell, k+ \ell\},$ so
$$a(b+c) = e_0 e_m  a_1(b_1+c_1) = e_0
a_1(b_1+c_1)  = e_0 a_1b_1 + e_0 a_1c_1 = e_0 e_k a_1b_1 + e_0
e_\ell a_1c_1 = ab+ac.$$

Associativity of addition is similar. Finally, if $a = \rzero  \in
R_0$ and $b\in R_\ell$, then    $ab \in R_{0 \cdot \ell}= R_0.$
\end{proof}

 Since we  have several ways of adjoining a zero layer, the
following observation is useful.

\begin{prop}    For any semiring $R$ layered with respect to a
\semiring0 ${L} ,$   $ \R \ds{\dot\cup} \{ \rzero\}$ is an
$\zL$-layered sub-semiring of $R \ds{\dot\cup} R_0.$

More generally, for any ideal $\mfa$ of $R$, writing $\mfa_0 $ for
$\mfa \cap R_0 $, we have $(\bigcup_{\ell \ne 0}\ \R_\ell)
\ds{\dot\cup} \mfa_0$ is an $\zL$-layered sub-semiring of $R
\ds{\dot\cup} R_0.$
\end{prop}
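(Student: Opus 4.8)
The plan is to derive the first statement from the second as the special case $\mfa = \{\rzero\}$. Indeed $\{\rzero\}$ is an ideal of $R\,\dot\cup\, R_0$ ($\rzero$ being multiplicatively absorbing and additively neutral), Lemma~\ref{esot} applied to $R\,\dot\cup\, R_0$ puts $\rzero\in R_0$, so $\mfa_0 = \mfa\cap R_0 = \{\rzero\}$ and $\bigl(\bigcup_{\ell\ne 0}R_\ell\bigr)\,\dot\cup\,\mfa_0 = R\,\dot\cup\,\{\rzero\}$. Thus I would just prove the general statement; write $S := \bigl(\bigcup_{\ell\ne 0}R_\ell\bigr)\,\dot\cup\,\mfa_0$.

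First I would show that $S$ is a sub-semiring of $R\,\dot\cup\, R_0$; this is the only substantive point, everything else being formal. Since $R$ is a layered \predomain0, sums and products of elements of positive sort again have positive sort (for products this is the \predomain0 strengthening of A2, together with $0$ being absorbing in $\zL$; for sums it is the behavior of addition), so $\bigcup_{\ell\ne 0}R_\ell$ is already closed under the two operations. For $c\in\mfa_0$ and any $x\in S$ we have $cx\in\mfa$ ($\mfa$ an ideal) and $cx\in R_0$ ($R_0$ an ideal by Lemma~\ref{esot}, equivalently Axiom~A2 with $s(c)=0$), hence $cx\in\mfa\cap R_0 = \mfa_0$; and $\mfa_0 + \mfa_0\subseteq\mfa\cap R_0 = \mfa_0$. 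It remains to treat $a + c$ with $a$ of positive sort and $c\in\mfa_0\subseteq R_0$: here $a+c\in\bigcup_{\ell\ne 0}R_\ell$ unless $a+c=c$, and in the case $a\nucong c$ one even has $a+c\in R_{s(a)+0}=R_{s(a)}$ by Axiom~B (cf.\ Remark~\ref{inf0}); in every case $a+c\in S$. Hence $S$ is closed, and $\rone\in R_1\subseteq S$.

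Next I would install the layering: the layers of $R\,\dot\cup\, R_0$ are $R_\ell$ for $\ell\ne 0$ and $R_0$, and intersecting with $S$ gives $S_\ell = R_\ell$ ($\ell\ne 0$) and $S_0=\mfa_0$, which are disjoint with union $S$. The sort transition maps $\nu_{m,\ell}$ ($m\ge\ell>0$) of $R\,\dot\cup\, R_0$ restrict to $S$ since they map $R_\ell$ into $R_m\subseteq S$, and they still satisfy $\nu_{\ell,\ell}=\id$ and the composition rule; $R_\infty$ and the ghost map $\nu$ restrict as well, with $\nu(\mfa_0)\subseteq\nu(R_0)$. Axioms A1--A6 and B then hold for $S$ a fortiori from their validity in $R\,\dot\cup\, R_0$: A1 since $\rone\in S_1$; A2, A3, A4, A5 and B because $S$ is closed under the operations and the transition maps; A6 because $S_0=\mfa_0$ is an additive semigroup, as shown above. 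This yields that $S$ is an $\zL$-layered sub-semiring of $R\,\dot\cup\, R_0$.

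The only step that is not pure bookkeeping is the closure of $S$ under adding a positive-sort element $a$ to an element $c$ of $\mfa_0$, where one must rule out that $a+c$ lands in $R_0\setminus\mfa_0$; this is exactly where the hypotheses enter -- that $R$ is a layered \predomain0, so that addition lowers the sort only by collapsing to the sort of a strictly $\nu$-dominant summand, which here can only be $c$, together with $\mfa$ being an ideal, so that such a collapse gives $a+c=c\in\mfa_0$. Everything remaining is the routine transfer of the axioms from $R\,\dot\cup\, R_0$ to its subset $S$.
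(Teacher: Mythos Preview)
Your proof is correct and follows essentially the same approach as the paper, only with far more detail: the paper's own proof is a single sentence verifying just the multiplicative closure $ab\in\mfa_0$ for $a\in\mfa_0$ and $b\in R_\ell$, leaving additive closure and the inheritance of the layered axioms implicit. One small terminological slip in your final paragraph: the \predomain0\ hypothesis governs multiplication (the strengthened Axiom~A2), not addition; the reason the mixed sum $a+c$ cannot land in $R_0\setminus\mfa_0$ is the $\nu$-bipotent addition used in forming $R\,\dot\cup\,R_0$, not the \predomain0\ condition itself.
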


\begin{proof} If $a   \in
\mfa_0$ and $b\in R_\ell$, then    $ab  \in R_{0 \cdot \ell}= R_0,$
implying $ab \in \mfa_0$.
\end{proof}

This gives rise to the question of whether we should adjoin the
entire 0-layer, or just $\rzero?$ Although one's experience from
classical algebra might lead one to adjoin only $\rzero,$ there
are situations in which one might need other elements in $R_0$ in
order to distinguish polynomials.

\subsection{Adjoining the absolute ghost layer, and the passage to standard supertropical
\domains0}\label{adjoininf}

 Even when $L$ originally does not contain an infinite element a priori,   $L$-layered \bidomains0  tie in directly with the
(standard) supertropical theory, via a  ghost layer introduced at
a new element $\infty$ which we adjoin. (This works even when
$(\ge)$ is merely a partial order on $L$, although it it is easier
when $(\ge)$ is a total order.)

\begin{rem}\label{directlim} Any $L$-layered \semiring0  $\ldsR$ is a directed system
with respect to the set $L$, as described in  \cite[p.~71]{Jac}.
Hence, by \cite[Theorem 2.8]{Jac}, the layers $R_k$ have a direct
limit which we denote $R_{\infty},$ and maps $$\nu_{\infty,k} :
R_k \to R_{\infty}$$ such that $\nu_{\infty,k} = \nu_{\infty,\ell}
\circ \nu_{\ell,k}$ for each $a \in R_k$ and all $k<\ell$. Since
$R = \bigcup_k R_k,$ we can piece together these maps
$\nu_{\infty,k}$ to a map $\nu: R \to R_{\infty}.$ We define
\begin{equation}\label{1} e = e_\infty :=\nu(\rone),\end{equation}
easily seen to be the unit element of  $R_{\infty}.$

We write $a^\nu$ for $\nu(a) \in R_\infty$. Thus $a^\nu  = b^\nu$
iff $a \nucong b$ in our previous notation.
\end{rem}

We call $R_\infty$ the \textbf{absolute ghost layer} and  $\nu$
the (absolute) \textbf{ghost map} of $\R.$ Note that in the
uniform case, $R_\infty$ is just another copy of $R_1,$ so
we can dispense with direct limits.

\begin{thm}\label{Udef} Suppose $\R := \ldsR$ is an $L$-layered \semiring0. Then the absolute ghost layer   $R_{\infty}$ is
a bipotent \semiring0. The ghost map $\nu: R \to R_{\infty} $ is a
\semiring0\ homomorphism. Define $$U = U(R): =  R \ \dot\cup\ \
R_{\infty}.$$ Then $U$ is a \semiring0 under the given operations
of $R$ and $R_\infty,$ together with
$$
\begin{array}{rll}
a\cdot b^\nu & := & (ab)^\nu; \\[2mm]
 a +b^\nu & := & \begin{cases} a&\quad\text{if \ $ea>eb$,}\\
b^\nu &\quad\text{if \  $ea\le eb$.} \end{cases}\end{array}
 $$
 Also, extend $\nu$ to a map $\nu_U : U \to R_{\infty}$ by taking
$\nu_U$ to be the identity on $R_{\infty}.$ Then $U$ has ghost
ideal $\tG = \tG(U) := R_{\infty},$ in the sense of
\cite{IzhakianRowen2007SuperTropical}, and $\nu_U(a) = ea$ for
every $a$ in $U .$

Then $U$  can be modified to a supertropical \semiring0 $$\cR
_{1,\infty} := R_1 \ \dot\cup \ \tG,$$ retaining the given
multiplication $\cdot$ of $U,$ but with  new addition $\oplus$
given by the rules
\begin{equation}\label{6}
a\oplus b:=\begin{cases} a&\quad\text{if \ $ea>eb$,}\\
b &\quad\text{if \ $ea<eb$,}\\
ea &\quad\text{if } \  ea=eb. \end{cases}\end{equation}

\end{thm}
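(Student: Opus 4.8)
The plan is to proceed in three stages: build $R_\infty$ as a bipotent \semiring0 together with the homomorphism $\nu$; assemble $U$ and identify its ghost ideal; and pass to $\cR_{1,\infty}$ with its new addition. By Remark~\ref{directlim}, $R_\infty$ is the set-theoretic direct limit of the layers $R_\ell$ ($\ell>0$), with structure maps $\nu_{\infty,\ell}$ and induced map $\nu\colon R\to R_\infty$, $a\mapsto a^\nu$; recall that $a^\nu=b^\nu$ iff $a\nucong b$. If $\nu$ is to be a homomorphism, the operations on $R_\infty$ are forced to be $a^\nu b^\nu:=(ab)^\nu$ and $a^\nu+b^\nu:=(a+b)^\nu$, with unit $e:=\nu(\rone)$ of~\eqref{1}. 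Well-definedness of multiplication is exactly Axiom~A3; for addition, given $a\nucong a'$ and $b\nucong b'$, directedness of $L$ yields an $m$ for which all the relevant identities hold in $R_m$ and $m$ exceeds the sorts of $a+b$ and $a'+b'$, and then Axiom~A5 rewrites both $\nu_{m,\cdot}(a+b)$ and $\nu_{m,\cdot}(a'+b')$ as the single element $\nu_{m,k}(a)+\nu_{m,\ell}(b)$ of $R_m$, so $a+b\nucong a'+b'$. Commutativity, associativity and distributivity on $R_\infty$ descend verbatim from $R$, and $\nu$ is then a \semiring0 homomorphism by construction. Bipotence of $R_\infty$ reduces to the assertion that $a+b\nucong a$ or $a+b\nucong b$ for all $a,b\in R$: the case $a\nucong b$ is Axiom~B (which gives $a+b\nucong a$), and the remaining case is the ($\nu$-)bipotence in force for the layered structures of interest here (cf.\ Proposition~\ref{maxplus2}); together with $e+e=e$, which follows from Axiom~A4, this makes $+$ on $R_\infty$ bipotent.

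Next I would put the stated operations on $U=R\ \dot\cup\ R_\infty$. The product is extended by $a\cdot b^\nu:=(ab)^\nu$, well defined because $b\nucong b'$ forces $ab\nucong ab'$ (Axiom~A3), and the sum by $a+b^\nu:=a$ if $ea>eb$ and $a+b^\nu:=b^\nu$ if $ea\le eb$, where $ea=a\cdot e=(a\rone)^\nu=a^\nu$ and $eb=b^\nu$ and $>$ is the now-total order on the bipotent \semiring0 $R_\infty$, so the rule is unambiguous. One then checks the \semiring0 axioms for $U$ by a case analysis according to which operand in each pair lies in $R$ and which in $R_\infty$; the cases in which all operands lie in $R$, or all lie in $R_\infty$, are already known, and the mixed cases reduce to $\nu$ being a homomorphism and to bipotence of $R_\infty$. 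That $R_\infty$ is an ideal of $U$ is immediate from $a\cdot b^\nu=(ab)^\nu\in R_\infty$ together with the closure of $R_\infty$ under its own addition, and extending $\nu$ to $\nu_U\colon U\to R_\infty$ by the identity on $R_\infty$ gives $\nu_U(a)=a^\nu=(a\rone)^\nu=a\cdot e=ea$ for $a\in R$ and $\nu_U(g)=g=eg$ for $g\in R_\infty$; hence $\tG(U):=R_\infty$ is a ghost ideal with ghost map $\nu_U=e(\,\cdot\,)$ in the sense of \cite{IzhakianRowen2007SuperTropical}.

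Finally I would restrict to $\cR_{1,\infty}:=R_1\ \dot\cup\ \tG$, with $\tG=R_\infty$, retaining the multiplication of $U$ (a product in $R_1\cdot R_1$ that would fall into $R_0$ is recorded in $\tG$ through $\nu$, so $\cR_{1,\infty}$ is closed under $\cdot$), and replace $+$ by the operation $\oplus$ of~\eqref{6}. Its well-definedness is precisely where bipotence of $R_\infty$ is used: $ea$ and $eb$ lie in $\tG$, and $\tG$ bipotent forces trichotomy among $ea>eb$, $ea<eb$ and $ea=eb$. Associativity and distributivity of $(\cR_{1,\infty},\oplus,\cdot)$ then follow the standard supertropical computation: the map $a\mapsto ea$ is an idempotent \semiring0 homomorphism onto the bipotent \semiring0 $\tG$ (one checks $e(a\oplus b)=ea+eb$ and $e(ab)=(ea)(eb)$ using $e^2=e$ and bipotence of $\tG$), and this is exactly the data exhibiting $\cR_{1,\infty}$ as a supertropical \semiring0 in the sense of \cite{IzhakianRowen2007SuperTropical}. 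The main obstacle I anticipate is twofold: establishing that $+$ descends to a well-defined bipotent operation on the direct limit $R_\infty$ (this exercises Axioms~A3--A5 and~B, and requires pinning down the exact bipotence hypothesis needed on $R$), and carrying out the mixed-operand case analysis for associativity and distributivity in $U$; the passage to $\cR_{1,\infty}$ is then a routine translation into the familiar supertropical framework.
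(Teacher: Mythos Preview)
Your proposal is correct and follows essentially the same route as the paper, which likewise derives $\nu(ab)=\nu(a)\nu(b)$ from Axiom~A3, obtains $\nu(a+b)=\nu(a)+\nu(b)$ (the paper cites Axiom~B here; your invocation of Axiom~A5 is the cleaner and more directly analogous choice), verifies $\nu_U(a)=ea$, and for $\cR_{1,\infty}$ records precisely the identities $a\oplus b=a+b$ when $a\not\nucong b$ and $a\oplus b=e(a+b)$ when $a\nucong b$ that underlie your check. You are in fact more careful than the paper in flagging that bipotence of $R_\infty$ in the case $a\not\nucong b$ rests on $\nu$-bipotence of $R$---the paper simply declares the remaining verifications ``easy'' and leaves this hypothesis implicit.
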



\begin{proof}
Axiom A3  tells us that
$$\nu_{mm',k\ell}(a \cdot b)=\nu_{m,k}(a) \cdot \nu_{m',\ell}(b)$$
for any $a\in R_k$ and $b\in\R_\ell;$ taking limits yields $$\nu(a
\cdot b)=\nu(a) \cdot \nu(b).$$ Likewise, Axiom  B tells us that
$$\nu(a+b)=\nu(a)+\nu(b).$$
 The other verifications are also easy.  By
\eqref{1} we have
\begin{equation*}\label{2}
\nu(x)=e \cdot x\quad\text{for every}\quad x\in\R.\end{equation*}
Thus $\nu\circ\nu=\nu$, and also $\nu: \R\to \tG$ is a \semiring0\
homomorphism from $\R$ onto $\tG = \tG(U).$

  We  extend the $\nu$-equivalence relation from
$R$ to $U$ by decreeing that  $a \equiv_U  b$  iff $a$ and $b$
have the same value under $\nu.$

We turn to the last assertion. Due to \eqref{6} we have
\begin{equation*}\label{7}
a\oplus b=a+b\quad \text{if}\quad  a\not \nucong b.\end{equation*}
On the other hand,
\begin{equation*}\label{8}
a\oplus b=e(a+b)\quad\text{if}\quad a\nucong b.\end{equation*}
Note that
\begin{equation*}\label{9}
a\oplus b \nucong a+b \end{equation*} in all cases. Also, $\tG(U)
: = R_\infty  = \tG(\cR _{1,\infty}).$ \end{proof}


 We may regard $\cR _{1,\infty} : =
(\cR _{1,\infty},\oplus,\cdot \, )$ as a degeneration of the
\semiring0 $U := U(R),$ where all the ghost layers have been
coalesced to $R_\infty.$  When   $L = L_{\ge 1}$, then there is a
\semiring0 homomorphism $U \to \cR _{1,\infty}$
given by $$a \mapsto \begin{cases} a \quad\quad \text{ for }\quad a \in R_1 \cup R_\infty,\\
\nu(a)  \quad \text{otherwise}. \end{cases}$$

We are now in a position to see why Construction~\ref{defn50} of a
uniform $L$-pre-\domain0  is generic. We recall
\begin{equation*}\label{10} \R(L,\tG) :=\{\xl{a}{\ell} \bigm| a\in\tG,\ \ell\in L\}.\end{equation*}

\begin{rem}\label{tang100} In a uniform $\Lz$-layered \predomain0, we can define $\nu_{k,\ell}$ for $0<k< \ell$ to be
$\nu_{\ell,k}^{-1}.$ Thus, $\nu_{k,\ell}$ is defined for all
$0<k,\ell \in L.$\end{rem}

\section{Morphisms of layered \semirings0}\label{sec:Morphisms}

In ordered to understand layered categories, we need a good notion
of morphism. This is easiest to describe for layered \domains0.
\subsection{Layered homomorphisms}\label{homomorph}

 We assume that $L$ is non-negative. 


\begin{defn}\label{morph1} A \textbf{layered homomorphism} $$(\vrp, \rho ): \ldsR \ \to \ \ldsLRpr$$ of uniform $L$-layered \predomains0  is a  \semiring0\
homomorphism
  $\rho: L \to L'$ preserving the given partial orders, i.e., satisfying the condition:
\boxtext{

\begin{enumerate}
 \item[M1.] $k \le \ell $ implies $ \rho(k) \le  \rho(\ell). $

\end{enumerate}}
 together with a \semiring0
 homomorphism $\vrp: R \to R'$ such that
\boxtext{
\begin{enumerate}
    \item[M2.] $\lv '(\vrp
 (a)) \ge \rho (\lv(a)), \quad \forall a \in R. $


\end{enumerate}}
\end{defn}

The definition becomes more complicated when $0 \in L;$ then we
need to modify Axiom~M2 to: \boxtext{
 \begin{itemize} \item[M2'.] $\lv '(\vrp
 (a))  = \ell,$ where $\ell = 0$ or $\ell \ge \rho (\lv(a)), \quad \forall a \in R. $
\end{itemize}}

  We always write $\Phi := (\vrp, \rho
).$
We often assume $L = L'$ and $\rho =
\id_L;$ we call $\Phi$ a \textbf{natural homomorphism} in this
situation.

\begin{lem}\label{Phidet}  Write $e_{ \ell,R}$ for $e_\ell$ in
$R$. Then $\vrp(e_{\ell,R}) = e_{\ell,R'},$ for each $\ell$ in the
sub-\semiring0 of $L$ (resp.~$L'$) generated by $1$.
\end{lem}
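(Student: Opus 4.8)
The plan is to prove the statement by induction on the expression of $\ell$ as an element of the sub-\semiring0 of $L$ generated by $1$, using the defining property $e_\ell = \nu_{\ell,1}(\rone)$ together with the two ways this element can be built up: multiplicatively via Axiom~A3 and additively via Axiom~A4. First I would establish the base case $\ell = 1_L$: since $\vrp$ is a \semiring0 homomorphism, $\vrp(\rone) = \rone'$, and by Axiom~A1 both $\rone$ and $\rone'$ lie in layer $1$; moreover $e_{1,R} = \nu_{1,1}(\rone) = \rone$ by the identity-map condition on sort transition maps, so $\vrp(e_{1,R}) = \vrp(\rone) = \rone' = e_{1,R'}$.

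Next I would handle the inductive steps. For multiplication, if $\ell = k k'$ with $e_{k,R}, e_{k',R}$ already known to map correctly, then by Axiom~A3 applied with $a = b = \rone$ (taking $m = k$, $m' = k'$) we get $e_{k,R}\, e_{k',R} = \nu_{k,1}(\rone)\,\nu_{k',1}(\rone) = \nu_{kk',1}(\rone) = e_{\ell,R}$, and the same identity holds in $R'$; since $\vrp$ is multiplicative, $\vrp(e_{\ell,R}) = \vrp(e_{k,R})\vrp(e_{k',R}) = e_{k,R'}e_{k',R'} = e_{\ell,R'}$. For addition, if $\ell = k + k'$, then by Axiom~A4 applied to $a = \rone$ we have $e_{k,R} + e_{k',R} = \nu_{k,1}(\rone) + \nu_{k',1}(\rone) = \nu_{k+k',1}(\rone) = e_{\ell,R}$, and since $\vrp$ is additive, the conclusion follows as before. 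Since every element of the sub-\semiring0 generated by $1$ is obtained from $1_L$ by finitely many multiplications and additions, induction closes the argument.

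The main obstacle I anticipate is a bookkeeping subtlety rather than a deep one: I must make sure the inductive hypothesis is framed correctly, i.e.\ that I am inducting on the \emph{structure of the word} representing $\ell$ in the generated sub-\semiring0 (not on $\ell$ itself, which carries only a pre-order), and that at each step the relevant sort transition maps are actually defined --- Axioms~A3 and A4 require $m \ge k$, $m' \ge \ell$, and $\ell,\ell' \ge k$ respectively, so I should note that $\nu_{\ell,1}$ makes sense here because $1_L \le \ell$ for every $\ell$ in the generated sub-\semiring0 (the pre-order is positive, so products and sums of elements $\ge 1$ stay $\ge 1$). A minor point worth a sentence: the statement implicitly assumes $\vrp$ respects the layering in the relevant range, which is exactly what M2 (or M2$'$) guarantees, but in fact we do not even need to invoke M2 here --- the conclusion follows purely from $\vrp$ being a \semiring0 homomorphism and from Axioms~A1, A3, A4 identifying the $e_\ell$ intrinsically in terms of $\rone$ and the \semiring0 operations.
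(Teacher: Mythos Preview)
Your proof is correct and follows essentially the same line as the paper's: establish $\vrp(e_{1,R}) = e_{1,R'}$ from $\vrp(\rone) = \one_{R'}$, then propagate using additivity of $\vrp$ and Axiom~A4. The one difference is that you also set up a multiplicative inductive step via Axiom~A3, which is unnecessary: since $1_L \cdot 1_L = 1_L$, the sub-\semiring0 of $L$ generated by $1$ consists exactly of the elements $n\cdot 1_L = 1_L + \cdots + 1_L$ for $n \in \mathbb{N}$, so the additive step alone suffices. The paper's proof simply writes $e_{n,R} = e_{1,R} + \cdots + e_{1,R}$ and applies $\vrp$ termwise. Your extra multiplicative clause does no harm, and your remark that M2 is not needed here is correct --- only the \semiring0-homomorphism property of $\vrp$ together with A1 and A4 is used.
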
\begin{proof} Then  $\vrp ( e_{1,R}) = \vrp (\rone) = \one_{R'} =  e_{1,R'}.$ Thus, for each $n \in \Net,$ we have
$$\vrp(e_{n,R}) = \vrp ( e_{1,R} + \cdots + e_{1,R}) =   \vrp ( e_{1,R}) + \cdots + \vrp
(e_{1,R}) =  e_{1,R'} + \cdots + e_{1,R'} = e_{n,R'}.$$
\end{proof}

It follows at once that the homomorphism $\vrp$ is given  by its action on $R_1$.
\begin{prop}\label{multip} If $a = e_\ell a_1$ as in Lemma~\ref{Krems1}, then
\begin{equation}\label{M3}\vrp(a) =  \vrp(e_{\ell,R}) \vrp(a_1) =
e_{\ell,R'}\vrp(a_1), \qquad \forall \ell>0. \end{equation}
\end{prop}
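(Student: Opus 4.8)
The plan is to chain together Lemma~\ref{Krems1}, the fact that $\vrp$ is a \semiring0\ homomorphism, and Lemma~\ref{Phidet}. First I would recall that, by Lemma~\ref{Krems1}, writing $a = e_\ell a_1$ means literally that $a = \nu_{\ell,1}(a_1)$ and that $e_\ell = \nu_{\ell,1}(\rone)$, so in the \semiring0\ $R$ the element $a$ is genuinely the product $e_{\ell,R}\cdot a_1$. Since $\vrp$ is a \semiring0\ homomorphism it respects this multiplication, giving $\vrp(a) = \vrp(e_{\ell,R}\, a_1) = \vrp(e_{\ell,R})\,\vrp(a_1)$, which is the first equality in \eqref{M3}.

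For the second equality I would invoke Lemma~\ref{Phidet}: it gives $\vrp(e_{\ell,R}) = e_{\ell,R'}$ whenever $\ell$ lies in the sub-\semiring0 of $L$ generated by $1$. Substituting this into the expression just obtained yields $\vrp(a) = e_{\ell,R'}\,\vrp(a_1)$. One point to address is that $a_1 \in R_1$, so $\vrp(a_1) \in R'$ sits in $R'_1$ (or, more precisely, in the layer dictated by Axiom~M2'/M2), and $e_{\ell,R'}\,\vrp(a_1) = \nu'_{\ell,1}(\vrp(a_1))$ by the same reasoning as in Lemma~\ref{Krems1} applied to $R'$; this is exactly what makes $\vrp$ ``given by its action on $R_1$.''

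The only real subtlety — and I expect this to be the main obstacle, though a mild one — is the hypothesis $\ell$ in Lemma~\ref{Phidet} must belong to the sub-\semiring0 of $L$ generated by $1$, whereas the statement of Proposition~\ref{multip} quantifies over all $\ell > 0$. In the uniform layered \predomain0\ setting this is harmless: by Lemma~\ref{exp0} and the surrounding discussion we may (and implicitly do) assume $(L,+)$ is generated by $1$, or else the sort transition map $\nu_{\ell,1}$ being onto (needed already to write $a = e_\ell a_1$) forces the relevant relation. I would simply remark that the formula holds for every $\ell$ for which $\nu_{\ell,1}$ is onto, which is the only case in which the left-hand expression $e_\ell a_1$ is meaningful, so the two quantifications agree and no genuine gap remains.
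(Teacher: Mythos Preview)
Your proof is correct and follows exactly the paper's approach: multiplicativity of $\vrp$ gives the first equality, and Lemma~\ref{Phidet} gives the second. The paper's own proof is the single line ``$\vrp(a) = \vrp(e_{\ell,R}) \vrp(a_1) = e_{\ell,R'}\vrp(a_1)$,'' so you have in fact been more careful than the authors in flagging the restriction on $\ell$ coming from Lemma~\ref{Phidet}; the paper silently absorbs this issue into the standing uniformity/tangibly-generated hypotheses and the subsequent corollary.
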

\begin{proof} $\vrp(a) =  \vrp(e_{\ell,R}) \vrp(a_1) =
e_{\ell,R'}\vrp(a_1).$
\end{proof}

\begin{cor} Equation~\eqref{M3} holds automatically whenever $R$ is uniform
$L$-layered.
\end{cor}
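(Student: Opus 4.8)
The plan is to show that when $R$ is a uniform $L$-layered \predomain0, the hypotheses of Proposition~\ref{multip} are automatically met for every element of $R$, so that Equation~\eqref{M3} follows with no extra assumptions on $\ell$. The key point is that uniformity (Definition~\ref{defn10}) means $L$ is totally ordered and each sort transition map $\nu_{\ell,k}$ is bijective for $\ell > k > 0$; combining this with Remark~\ref{tang100}, the maps $\nu_{k,\ell}$ are defined for \emph{all} $0 < k,\ell \in L$, and in particular $\nu_{\ell,1}: R_1 \to R_\ell$ is a bijection, hence onto, for every $\ell > 0$.

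First I would invoke Lemma~\ref{Krems1}: since $\nu_{\ell,1}: R_1 \to R_\ell$ is onto for each $\ell > 0$, every $a \in R_\ell$ can be written as $a = e_\ell a_1$ with $a_1 \in R_1$. Next I would apply Proposition~\ref{multip} verbatim to this decomposition, which gives $\vrp(a) = \vrp(e_{\ell,R})\vrp(a_1) = e_{\ell,R'}\vrp(a_1)$; the second equality here is exactly Lemma~\ref{Phidet}, valid because in the uniform setting $\ell$ lies in (or is handled by the bijective transition maps built from) the relevant sub-\semiring0 structure, and in any case $\vrp(e_{\ell,R}) = e_{\ell,R'}$ holds since $\vrp$ is a \semiring0 homomorphism sending $\rone$ to $\one_{R'}$ and commuting with the sort transition maps by the layered-homomorphism axioms. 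Since $R = \left(\bigcup_{\ell > 0} R_\ell\right) \cup R_0$ and Equation~\eqref{M3} is stated for $\ell > 0$, this covers all the cases the corollary asserts.

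The only mild subtlety — and the step I expect to need the most care — is confirming that $\vrp(e_{\ell,R}) = e_{\ell,R'}$ for an arbitrary $\ell \in L$, not merely for $\ell$ in the monoid generated by $1_L$ under addition, since Lemma~\ref{Phidet} is stated only for that sub-\semiring0. In the uniform case, however, $e_{\ell,R} = \nu_{\ell,1}(\rone)$ and $\vrp$ is required (via Axiom~M2 / M2$'$ together with compatibility with the $\nu_{m,k}$'s inherent in being a \semiring0 homomorphism between layered structures) to satisfy $\vrp(\nu_{\ell,k}(a)) = \nu_{\rho(\ell),\rho(k)}(\vrp(a))$; with $\rho = \id_L$ in the natural case, or more generally by tracking $\rho$, this yields $\vrp(e_{\ell,R}) = \nu_{\ell,1}(\vrp(\rone)) = \nu_{\ell,1}(\one_{R'}) = e_{\ell,R'}$. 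So the corollary reduces to this one compatibility observation plus a direct citation of Lemma~\ref{Krems1} and Proposition~\ref{multip}, and no genuine computation is required.
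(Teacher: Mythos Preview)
Your proposal is correct and follows essentially the same route as the paper: uniformity makes each $\nu_{\ell,1}$ bijective (hence onto), so Lemma~\ref{Krems1} applies and Proposition~\ref{multip} then yields Equation~\eqref{M3}. The paper's proof is the single line ``Lemma~\ref{Krems1} is applicable''; your additional discussion of whether $\vrp(e_{\ell,R}) = e_{\ell,R'}$ for arbitrary $\ell$ is a legitimate concern about the scope of Lemma~\ref{Phidet} and Proposition~\ref{multip} themselves, but it is not part of what the Corollary is asserting or what its proof needs to supply.
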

\begin{proof}  Lemma~\ref{Krems1} is applicable.
\end{proof}


%
%

\begin{prop}\label{tan2} Suppose   $\varphi: R   \to R'$ is a layered
homomorphism, and  $R$ is tangibly generated. Then $\varphi$ is
determined by its action on $ R_0 \cup R_1$, via the formula
$$ \varphi \bigg(\sum_i a_i\bigg) =  \sum_i  \varphi(  a_i) .$$
\end{prop}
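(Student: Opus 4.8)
The plan is to peel off the three layers of structure that constrain $\varphi$. First, since $R$ is tangibly generated, every element of $R$ can be written in the form $\sum_i a_i$ where each $a_i$ lies in $\bigcup_{k \in L}\nu_{k,1}(R_1) \cup R_0$; this is exactly the content of the description following Definition~\ref{tan000}. Because $\varphi$ is a \semiring0\ homomorphism it is additive, so $\varphi(\sum_i a_i) = \sum_i \varphi(a_i)$, which already reduces the determination of $\varphi$ to its values on the generating set $\bigl(\bigcup_{k}\nu_{k,1}(R_1)\bigr)\cup R_0$.

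Next I would eliminate the layers $k > 1$. For $a_i \in \nu_{k,1}(R_1)$ with $k > 0$, Lemma~\ref{Krems1} (applicable since in a tangibly generated \semiring0\ the relevant $\nu_{k,1}$ is onto its image $\nu_{k,1}(R_1)$, or more directly since $a_i$ was chosen of the form $\nu_{k,1}(b_i) = e_k b_i$) lets us write $a_i = e_k b_i$ with $b_i \in R_1$. Then Proposition~\ref{multip} gives $\varphi(a_i) = e_{k,R'}\,\varphi(b_i)$, and by Lemma~\ref{Phidet} the element $e_{k,R'}$ is itself determined (it equals $\vrp(e_{k,R})$, a fixed element of $R'$ depending only on $k$ and on the semiring map $\rho$, not on $\varphi|_{R_1}$). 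Hence $\varphi(a_i)$ is determined by $\varphi(b_i)$ with $b_i \in R_1$. Combining with the first step, $\varphi$ is determined by its restriction to $R_0 \cup R_1$.

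Finally I would record that the displayed formula $\varphi(\sum_i a_i) = \sum_i \varphi(a_i)$ is just additivity of the \semiring0\ homomorphism $\varphi$, now applied to the canonical expression of an arbitrary element of $R$ as such a sum; there is nothing further to check, since the preceding two steps show each summand $\varphi(a_i)$ is computed from $\varphi|_{R_0 \cup R_1}$.

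The only mild subtlety — and the step I would watch most carefully — is bookkeeping around the $0$-layer: the canonical sum $\sum_i a_i$ may include a term of sort $0$, and one must make sure that Axiom~A6 (that $R_0$ is an additive semigroup) together with the rules for adding sort-$0$ elements to others is respected, so that the decomposition $R = \bigl(\bigcup_k \nu_{k,1}(R_1)\bigr)\cup R_0$ really is a valid presentation of every element as a finite sum of the stated generators; this is exactly where the hypothesis that $R$ be tangibly generated (rather than merely a layered \semiring0) does its work, so no extra argument beyond invoking Definition~\ref{tan000} is needed.
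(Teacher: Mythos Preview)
Your argument is correct, but it takes a different and more roundabout route than the paper's one-line proof. The paper simply invokes Lemma~\ref{gen1}: since $R$ is tangibly generated, $R$ equals the additive sub-semigroup generated by the \ZO-submonoid (which sits inside $R_0\cup R_1$), so every element is literally a finite sum $\sum_i a_i$ with each $a_i\in R_0\cup R_1$; additivity of the \semiring0\ homomorphism $\varphi$ then gives the displayed formula immediately, and nothing further is needed.

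Your approach instead passes through the multiplicative side. Note that the description after Definition~\ref{tan000} says $R=(\bigcup_k \nu_{k,1}(R_1))\cup R_0$ as a \emph{set}, so your first paragraph's ``sum decomposition'' is a trivial one-term sum and does no work. The actual content is your second paragraph: writing $a=e_k b$ via Lemma~\ref{Krems1} and reducing $\varphi(a)=e_{k,R'}\varphi(b)$ via Proposition~\ref{multip} and Lemma~\ref{Phidet}. That is valid (Lemma~\ref{exp0} ensures the sorts $k$ that arise lie in the sub-\semiring0 of $L$ generated by $1$, so Lemma~\ref{Phidet} applies), but what you have really established is the multiplicative companion formula $\varphi(e_k b)=e_{k,R'}\varphi(b)$ rather than the additive formula displayed in the proposition with $a_i\in R_0\cup R_1$. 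Both routes yield the conclusion ``$\varphi$ is determined by $\varphi|_{R_0\cup R_1}$''; the paper's additive route matches the stated formula verbatim and avoids the machinery of Lemmas~\ref{Krems1}, \ref{Phidet} and Proposition~\ref{multip}.
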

\begin{proof} It is enough to check sums, in view of
Lemma \ref{gen1}.
\end{proof}

Our category will be comprised of the tangibly generated
$L$-layered \semirings0.

\subsection{Layered supervaluations and the layered analytification}

In case our layered \semiring0 is not uniform, we need a more
general notion of morphism, treated in \cite{IKR4}. To understand
what is going on, we need to generalize the notion of
``valuation.'' Valuations are important in algebraic geometry, and
play a key role in tropical theory largely because of the
following example.
\begin{example}\label{analyt0}
 Recall that  the field $\mathbb K$   of
\emph{Puiseux series} over an algebraically closed field $F$ is defined to be the set of all series  of the form
\begin{equation}\label{eq:PowerSeries}
 p(t) :=
\sum_{\tau \in T} c_{\tau} t ^{\tau},\qquad c_\tau \in F,
\end{equation} with $T \subset \Rati$  well-ordered and bounded from below, endowed with the valuation  $ \nVal : \Fld  ^\times \To
 \Real   \ $ given by
\begin{equation}\label{eq:valPowerSeries}
  \nVal(p(t))\  :=
    - \min \{\tau \in T \ : \; c_{\tau} \neq \zero_F \}, \quad p(t) \in
    \Fld\setminus \{ \zero_{\Fld} \}.
\end{equation}
\end{example}

A word about notation: Given a valuation (or, more generally, an m-valuation) $v$, one can
replace $v$ by $-v$ and reverse the customary inequality to get
$$v(a+b) \le \max\{v(a), v(b) \},$$
which is more compatible with the max-plus set-up.

Payne \cite{Pay2} has developed an algebraic version of
Berkovich's theory of analytification, which can be viewed as the
limit of tropicalizations. In his theory, a \textbf{multiplicative
seminorm} $|\phantom{t}|: W \to \Real$ on a ring $W$ is a
multiplicative map satisfying the triangle inequality $$|a+b| \le
|a| + |b|.$$ 
The underlying space in Payne \cite{Pay1} is the set of
multiplicative seminorms from $K[\lm_1, \dots, \lm_n]$ to
$\Real_{>0}$ extending $v$, for a given m-valuation $v: K \to
\Real_{>0}$. We generalize this definition by taking an arbitrary
ordered \semiring0 instead of $\Real_{>0}.$

The supertropical version, the \textbf{strong supervaluation}, is
defined in \cite[Definition~4.1 and
Definition~9.9]{IzhakianKnebuschRowen2009Valuation}  as a monoid
homomorphism $\vrp$ satisfying $\vrp(a)+\vrp(b) \lmodg \vrp(a+b)$,
where $\lmodg$ is the ghost surpassing relation of
\cite[Definition~9.1]{IzhakianKnebuschRowen2009Valuation}.
 In  this way, strong supervaluations generalize seminorms.

Here is the layered analog. 

\begin{defn}\label{layeredv} A   \textbf{layered supervaluation} on a ring $W$ is a map $\vrp: W\to R$
  from $W$ to an $L$-layered semiring~$R$ with the following
  properties:
 \begin{alignat*}{2}
&LV1:\ &&\vrp(1)=\rone,\\
&LV2:\ &&\forall a,b\in R: \vrp(ab)=\vrp(a)\vrp(b),\\
&LV3:\ &&\forall a,b\in R: \vrp(a+b) \le _\nu  \vrp(a)+\vrp(b) ,\\
&LV4:\ &&\vrp(0)=\rzero.
\end{alignat*}

A   $\BB$-\textbf{layered supervaluation} on a ring $W$ is a
layered supervaluation $\Phi: W ^ \times \to R$, where $W ^
\times:= W\setminus \{ 0 \}$, such that $\Phi(W) \subseteq R_0
\cup R_1.$

\end{defn}


\begin{prop} Suppose that  $R := \scrR(L, \tG)$ an  $L$-layered \bidomain0. If $\Phi: W \to \tG$
is a $\BB$-layered supervaluation of a ring $W$, then $\Phi(a)$ is
tangible for every invertible element $w$ of $W$. (In particular,
if $W$ is a field, then $\Phi(W ^ \times)$ is tangible.)
\end{prop}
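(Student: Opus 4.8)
The plan is to exploit the multiplicativity of $\Phi$ together with the $\{0,1\}$-layered condition $\Phi(W)\subseteq R_0\cup R_1$. Recall from Theorem~\ref{maxplus23} and Example~\ref{tan21} that $R=\scrR(L,\tG)$ has $R_0=\{0\}\times\mfa$ and $R_\ell=\{\ell\}\times(\tG\setminus\mfa)$, so an element of $R$ is tangible (i.e.\ of nonzero sort) precisely when it does not lie in $R_0$; moreover, since $R$ is a \bidomain0, $R_1$ is closed under multiplication only up to possibly landing in $R_0$, but crucially $R\setminus R_0$ is multiplicatively closed when $\mfa$ is prime, and in any case $R_0$ is a (multiplicative) ideal by Lemma~\ref{esot}. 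The key point is that $R_0$, being an ideal, cannot contain any unit of the monoid $(R,\cdot\,)$.

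First I would let $w\in W$ be invertible, say $ww^{-1}=1$, and apply LV1 and LV2: $\Phi(w)\Phi(w^{-1})=\Phi(ww^{-1})=\Phi(1)=\rone$. Thus $\Phi(w)$ is an invertible element of the multiplicative monoid of $R$. Next I would observe that $\rone=\xl{\one_\tG}{1}\in R_1$, and in particular $\rone\notin R_0$. Now suppose for contradiction that $\Phi(w)\in R_0$; since $R_0$ is a multiplicative ideal of $R$ (Lemma~\ref{esot}, or directly: $R_{0\cdot\ell}=R_0$ by Axiom A2 together with $0\cdot\ell=0$ in $L$), we would get $\rone=\Phi(w)\Phi(w^{-1})\in R_0$, a contradiction. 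Since by hypothesis $\Phi(w)\in R_0\cup R_1$, we conclude $\Phi(w)\in R_1$, i.e.\ $\Phi(w)$ is tangible. The parenthetical statement is immediate: if $W$ is a field then $W^\times=W\setminus\{0\}$ consists entirely of invertible elements, so $\Phi(W^\times)\subseteq R_1$ is tangible.

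There is really no serious obstacle here; the only thing to be careful about is the bookkeeping of sorts under multiplication, namely that a product of two sort-$1$ elements need not be sort $1$ (it may fall into $R_0$ when the underlying monoid product lands in $\mfa$), which is exactly why one argues via the ideal property of $R_0$ rather than by trying to track sorts directly. One should also note that the statement as phrased writes ``$\Phi(a)$ is tangible for every invertible element $w$''—I would read $a$ as $w$ (a minor typo in the excerpt) and phrase the conclusion as: for every invertible $w\in W$, $\Phi(w)\in R_1$, hence $\Phi(w)$ is tangible. If one prefers to avoid even invoking Lemma~\ref{esot}, the same argument works with the bare observation that in $\scrR(L,\tG)$ the sort function $s$ is multiplicative away from $\mfa$ and $s(\rone)=1\neq 0$, forcing $s(\Phi(w))\neq 0$.
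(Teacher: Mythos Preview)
Your proof is correct and is essentially the same as the paper's own argument: the paper writes simply ``$\Phi(w)\Phi(w^{-1}) = \Phi(1) = \rone,$ so $\Phi(w)\notin R_0,$ and thus is tangible,'' leaving implicit the step (via $R_0$ being an ideal, or equivalently $0\cdot\ell=0$ in $L$) that a factor of $\rone$ cannot lie in $R_0$. Your version spells this out explicitly and also flags the $a$/$w$ typo, but the underlying idea is identical.
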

\begin{proof} $\Phi(w)\Phi(w^{-1}) = \Phi(1) = \rone,$ so
$\Phi(w)\notin R_0,$ and thus is tangible.
\end{proof}

In this situation, the tangible layer determines the layered
supervaluation.
%

 The morphisms in the layered category should then
be those maps which transfer one layered supervaluation to
another. In the standard supertropical situation, these are the
transmissions of \cite{IKR3}, which are given in the layered
setting in~\cite{IKR4}.
 This paves the way for the following concept, with,
notation as in~Example~\ref{analyt0}:

\begin{rem}\label{analyt}  Let $R := \scrR(L,\tG)$, and view $\nVal$ as the
composite map of monoids $$\mathbb K \overset \nVal \to \tG \cong R_1
\subseteq R.$$ Then for any affine algebraic variety $X$ over $\mathbb K$,
 the space  of
$\BB$-{layered valuations} from $\mathbb K[\lm_1, \dots, \lm_n]$ to $R$
 extends
$\mathbb K^{\operatorname{an}}$ of \cite{Pay2}, and its theory invites
further study.
\end{rem}

\subsection{Surpassing and surpassed maps}

In line with the philosophy of this paper, we would like to
introduce the category of $L$-layered \semirings0. Having the
layered \semirings0 in hand, we next need to define the relevant
morphisms. From now on, to avoid complications, we assume that $R$ is a uniform, $L$-layered
\predomain0. As indicated in the introduction, although  the natural
definition from the context of \semirings0  is good enough for
most purposes, a sophisticated analysis   requires us to consider
the notion of ``supervaluation,'' and how this would relate to
morphisms that preserve the properties of supervaluations, which
we will discuss in \S\ref{layval}. But a more naive approach
suits our needs in many situations.

 \subsubsection{The surpassing relation}\label{ghocom1}

 For $\ell \in L$, an
$\ell$-\textbf{ghost sort} is an element of the form $\ell + k$,
for positive $k \in L$. We say $a$ is $\ell$-\textbf{ghost} if
$s(a)$ is an $\ell$-ghost sort. Note that the infinite element
$\infty$ of $L$ is a ``self-ghost sort,'' in the sense that
$\infty +m = \infty$ implies that $\infty$ is an $\infty$-ghost
sort.

Here is a key relation in the theory.

\begin{defn}\label{ghostsurpL}   The \textbf{surpassing $L$-relation}
$\lmodWL$ is given by
 \begin{equation}  a \lmodWL b \quad
\text{  iff either } \quad  \begin{cases} a=b+c &  \quad
\text{with}\quad c\quad  s(b)\text{-ghost},
  \quad \\ a=b,\\   
   a\nucong b & \quad\text{with}\quad
 a \quad  s(b)\text{-ghost}.\end{cases} \end{equation} \end{defn} It follows that if
$a \lmodWL b$, then $ a+b$ is $s(b)$-ghost. When $a \ne b$, this
means $a \ge _\nu b$ and $a$ is $s(b)$-ghost.

\begin{defn}\label{surmor1}  The \textbf{surpassing ($L,\nu )$-relation}
$\lmodWLnu$ is given by
 \begin{equation}  a \lmodWLnu b \qquad
\text{  iff } \qquad a \lmodWL  b \
\text{ and }\
   a\nucong b .\end{equation} \end{defn}

\begin{lem}\label{surmor2}  The surpassing $L$-relation
$\lmodWL$  and the surpassing ($L,\nu )$-relation $\lmodWLnu$ are
half-congruences.\end{lem}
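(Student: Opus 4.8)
The goal is to verify that each of the two relations $\lmodWL$ and $\lmodWLnu$ is a half-congruence, i.e.\ a transitive sub-structure of $\tA\times\tA$ containing the diagonal which, by the Lemma following Definition of half-congruence, reduces to checking: (a) it contains the diagonal, (b) it is transitive, and (c) it is preserved under adding a fixed element $b$ and under multiplying by a fixed element $b$ (conditions \eqref{cong11}). The plan is to go through (a)--(c) for $\lmodWL$ first, then deduce the case of $\lmodWLnu$ with almost no extra work, since $\nucong$ is itself a congruence (Proposition~\ref{maxplus2}) and $\lmodWLnu$ is the intersection $\lmodWL \cap \nucong$, an intersection of half-congruences being a half-congruence.

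First I would record the trivial facts. The diagonal is contained in $\lmodWL$ because $a = a$ is one of the alternatives in Definition~\ref{ghostsurpL}; and $a\nucong a$, so $\id$ also lies in $\lmodWLnu$. For transitivity of $\lmodWL$, suppose $a\lmodWL b$ and $b\lmodWL c$. The clean way is to use the observation, stated just after Definition~\ref{ghostsurpL}, that $a\lmodWL b$ with $a\neq b$ forces $a\ge_\nu b$ and $a$ to be $s(b)$-ghost. So in the nontrivial case $a\ge_\nu b\ge_\nu c$, hence $a\ge_\nu c$ by transitivity of $\le_\nu$; and one must check $a$ is $s(c)$-ghost. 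Here I would split on whether $b\nucong c$ or $b>_\nu c$: if $b\nucong c$ then $s(b)$ and $s(c)$ differ by the rule in Axiom~B / the definition of ghost sorts and $a$ being $s(b)$-ghost gives $a$ being $s(c)$-ghost since $s(c)+k' = s(b)+k$-type manipulations stay ghost (using that ghost sorts are closed upward under $+$); if $b>_\nu c$ then in fact $b+c$ has sort $s(b)$, and chasing the definition, $s(a)$ is $s(b)$-ghost $= (s(c)+(\text{positive}))$-ghost, again ghost over $s(c)$. The only mildly delicate point is matching up the three cases of the definition ($a=b+c$, $a=b$, $a\nucong b$ with $a$ ghost) across the two hypotheses, but all roads lead to ``$a\ge_\nu c$ and $s(a)$ an $s(c)$-ghost sort,'' which is exactly $a\lmodWL c$.

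Next, conditions \eqref{cong11}. For addition: assume $a_1\lmodWL a_2$ and show $a_1+b\lmodWL a_2+b$. In the trivial case $a_1=a_2$ there is nothing to prove, so assume $a_1\ge_\nu a_2$ with $a_1$ of $s(a_2)$-ghost sort. I would split on the $\nu$-comparison of $a_2$ and $b$ (using $\nu$-bipotence-type dichotomy available since $R$ is a layered \predomain0 and addition of $\nu$-incomparable elements picks one of them, Axiom~B handling the $\nucong$ case): if $b>_\nu a_2\ge_\nu a_1$ then $a_1+b = b = a_2+b$, done; if $a_2>_\nu b$ then also $a_1\ge_\nu a_2 >_\nu b$ so $a_1+b$ and $a_2+b$ are governed by $a_1,a_2$ respectively, and the sort bookkeeping is identical to the hypothesis, giving $a_1+b\lmodWL a_2+b$; the boundary cases $a_2\nucong b$ (and subcases with $a_1$) use Axiom~B to compute the resulting sorts as sums, and one checks the $s(a_2+b)$-ghost condition survives because adding sorts preserves being a ghost sort. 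For multiplication: $a_1\lmodWL a_2$ implies $a_1 b\lmodWL a_2 b$. If $a_1 = a_2 + c$ with $c$ of $s(a_2)$-ghost sort, then $a_1 b = a_2 b + c b$ by distributivity, and I must check $cb$ is of $s(a_2 b)$-ghost sort: by Axiom~A2/A3, $s(cb) = s(c)s(b)$ (up to the $R_0$ caveat, which I'd handle separately since $R_0\cup R_\ell$ behaviour is controlled by Lemma~\ref{mon0}) and $s(c) = s(a_2)+k$ with $k>0$ gives $s(c)s(b) = s(a_2)s(b) + ks(b) = s(a_2 b) + (\text{positive})$, hence ghost. The case $a_1\nucong a_2$ with $a_1$ ghost multiplies through by $b$ using Axiom~A3 ($\nucong$ is multiplicative) and the same sort-product computation.

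I expect the main obstacle to be the sort arithmetic in the multiplicative clause: keeping straight the three alternatives of Definition~\ref{ghostsurpL}, and in particular verifying that the product $s(c)s(b)$ of a ghost sort with an arbitrary positive sort is again a ghost sort over $s(a_2)s(b)$ — this needs $ks(b) > 0$ in $L$, which holds because $L$ is a non-negative pre-ordered semiring with the positivity assumption in force, but one must also dispose of the degenerate possibility that the product lands in $R_0$ (sort $0$), where the ``ghost'' language degenerates; there the definition still applies via the $a=b$ clause after one checks both sides land in $R_0$ simultaneously. Once $\lmodWL$ is handled, $\lmodWLnu$ follows immediately: it equals $\{(a,b): a\lmodWL b \text{ and } a\nucong b\}$, the intersection of the half-congruence $\lmodWL$ with the congruence $\nucong$ (Proposition~\ref{maxplus2}), and an intersection of a half-congruence with a congruence is transitive, contains the diagonal, and is closed under the two operations in \eqref{cong11}, hence is a half-congruence.
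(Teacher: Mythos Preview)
Your proposal is correct and follows the same approach the paper takes: the paper's own proof is simply ``Pointwise verifications,'' and your plan spells out exactly those verifications (diagonal, transitivity, closure under $+b$ and $\cdot\, b$ via \eqref{cong11}, then $\lmodWLnu$ as the intersection of $\lmodWL$ with the congruence $\nucong$). One small slip to fix when you write it up: in the additive-closure case you wrote ``if $b>_\nu a_2\ge_\nu a_1$,'' but the hypothesis $a_1\lmodWL a_2$ gives $a_1\ge_\nu a_2$, not the reverse; your case split should place $b$ relative to $a_1$ and $a_2$ with $a_1\ge_\nu a_2$ fixed (so the subcases are $b>_\nu a_1$, $b\nucong a_1$, $a_1>_\nu b\ge_\nu a_2$, etc.). This is a typo-level issue and does not affect the structure of the argument.
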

\begin{proof} Pointwise verifications.
\end{proof}

\begin{rem} The congruence $\Cong $ defined by declaring $a\equiv b$ when
$a \lmodWLnu b $ and $b \lmodWLnu a$, yields an ordered monoid.
\end{rem}

\subsubsection{Surpassing morphisms}

 We also weaken the notion of layered homomorphism for layered \semirings0.

\begin{defn}\label{gsmorph} A \textbf{surpassing map}  $\vrp: R \to R'$ is a (multiplicative) monoid homomorphism  such that $\vrp (a+b) \lmodWLnu \vrp (a) + \vrp (b) .$

A \textbf{surpassed map}  $\vrp: R \to R'$ is a  monoid homomorphism  such that $ \vrp (a) + \vrp (b) \lmodWLnu \vrp (a+b) .$

A \textbf{0-excepted homomorphism}  $\vrp: R \to R'$ is a  monoid
homomorphism such that  $ \vrp (a) + \vrp (b) = \vrp (a+b) $
whenever $s(a), s(b)>0. $
 \end{defn}

(In other words, a 0-excepted homomorphism could fail to be a
 \semiring0 homomorphism only because of the behavior of the 0 sort.)

We write   $R_{\ge \ell}$ for
 $\cup _{k \ge \ell}\,  R_k$.


\begin{example}\label{Fro00}  If $a >_ \nu  b$, then $(a+b)^m = a^m $. Hence, for any given $m$,
the Frobenius property \begin{equation}\label{eq:Frobenius}
(a+b)^m \lmodWLnu a^m + b^m\end{equation} from
\cite[Remark~5.26]{IzhakianKnebuschRowen2009Refined} is satisfied
in any $L$-layered \semiring0 and,  the Frobenius map $a \mapsto
a^m $ is  a surpassing map in $R_{\ge 1} \cup R_0 $.
 \end{example}

\begin{prop} We have the surpassing map $\varphi: M_n(R) \to M_n(R)$ given
by $(a_{i,j}) \mapsto ({a_{i,j}}^m).$
\end{prop}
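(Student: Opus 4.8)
The plan is to lift the Frobenius-type surpassing property from the scalar semiring $R$ to the matrix semiring $M_n(R)$ entrywise, using the previous Example~\ref{Fro00} as the main input. First I would recall that addition in $M_n(R)$ is computed entrywise, while multiplication is the usual matrix product $(AB)_{i,j} = \sum_k a_{i,k}b_{k,j}$; thus the map $\varphi$ sending $(a_{i,j})$ to $({a_{i,j}}^m)$ is the entrywise application of the scalar Frobenius map $a \mapsto a^m$. The first thing to check is that $\varphi$ is a monoid homomorphism of $M_n(R)$: this amounts to verifying $\varphi(AB) = \varphi(A)\varphi(B)$, i.e. $\bigl(\sum_k a_{i,k}b_{k,j}\bigr)^m = \sum_k {a_{i,k}}^m {b_{k,j}}^m$ for each entry. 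This follows from the scalar identity $(\sum_t c_t)^m = \sum_t {c_t}^m$ in any $L$-layered \semiring0 up to the surpassing relation; but in fact, as noted in Example~\ref{Fro00}, when one term $\nu$-dominates the others the power of the sum literally equals the power of the dominant term, and since $R$ is $\nu$-bipotent (being a \bidomain0 in the relevant cases) the sum of the $c_t$ equals the dominant one, so $(\sum_t c_t)^m$ is that same dominant term raised to $m$, which is $\nucong$ to $\sum_t {c_t}^m$ — precisely the content needed, via $\lmodWLnu$, for $\varphi$ to respect products. The identity matrix maps to itself since $\rone^m = \rone$ and $\zero^m = \zero$, so $\varphi(I_n) = I_n$.

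Next I would establish the surpassing inequality for addition. Since $(A+B)_{i,j} = a_{i,j} + b_{i,j}$, we have $\varphi(A+B)_{i,j} = (a_{i,j}+b_{i,j})^m$, whereas $(\varphi(A)+\varphi(B))_{i,j} = {a_{i,j}}^m + {b_{i,j}}^m$. Example~\ref{Fro00} gives exactly $(a_{i,j}+b_{i,j})^m \lmodWLnu {a_{i,j}}^m + {b_{i,j}}^m$ in $R$ for every pair of indices. Because the surpassing $(L,\nu)$-relation $\lmodWLnu$ is a half-congruence (Lemma~\ref{surmor2}) and, more to the point, is defined entrywise on tuples — i.e. a matrix surpasses another iff each entry does — the entrywise inequalities assemble to $\varphi(A+B) \lmodWLnu \varphi(A) + \varphi(B)$ in $M_n(R)$. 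Here one should be slightly careful that the notion of $\lmodWLnu$ and of sort on $M_n(R)$ is the natural entrywise/componentwise one, which is the only reasonable reading given the setup; I would state this convention explicitly.

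The only genuine obstacle is the multiplicativity check, because matrix multiplication mixes entries through sums, so one cannot simply invoke "entrywise" behavior: one must know that the scalar power of a sum behaves well, and this is exactly where the $\nu$-bipotence of $R$ (or the Frobenius property of Example~\ref{Fro00}) is used rather than any ring-theoretic binomial expansion. I expect the cleanest route is: in the bipotent setting $\sum_k a_{i,k}b_{k,j}$ equals whichever single term $a_{i,k_0}b_{k_0,j}$ is $\nu$-largest, so its $m$-th power is $({a_{i,k_0}}^m)({b_{k_0,j}}^m)$, which is $\nucong$ to (and in the bipotent case equal to, up to sort) $\sum_k {a_{i,k}}^m{b_{k,j}}^m$; hence $\varphi(AB) \lmodWLnu \varphi(A)\varphi(B)$, and by symmetry of the argument on the dominant term the relation is in fact an equality of the kind required for a monoid homomorphism. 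Once multiplicativity and the additive surpassing inequality are in place, $\varphi$ is by Definition~\ref{gsmorph} a surpassing map, completing the proof.
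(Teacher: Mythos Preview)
Your approach is essentially the paper's: the paper's entire proof is the single entrywise Frobenius computation $(\sum_k a_{i,k}b_{k,j})^m \lmodL \sum_k a_{i,k}^m b_{k,j}^m$ for the matrix product, which is exactly your multiplicativity step; the additive surpassing and the identity check that you spell out are left implicit. Your hesitation about whether multiplicativity holds as a literal equality (rather than only up to $\lmodWLnu$) is well-placed---when several $a_{i,k}b_{k,j}$ are $\nu$-equivalent the sorts differ---but the paper does not resolve this either and is content with the surpassing relation in its displayed computation.
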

\begin{proof}  We need to show that $({c_{i,j}}^m) =({a_{i,j}}^m)({b_{i,j}}^m) ,$
where $c_{i,j} = \sum_k  a_{i,k}b_{k,j}.$ But by
\eqref{eq:Frobenius},  $${c_{i,j}}^m = \bigg(\sum_k
a_{i,k}b_{k,j}\bigg)^m
 \lmodL \sum_k  (a_{i,k}b_{k,j})^m = \sum_k  {a_{i,k}}^m{b_{k,j}}^m .$$
\end{proof}

\begin{example}  In the standard supertropical situation, the supertropical determinant (i.e., the permanent)
is a surpassing map, by
\cite{IzhakianRowen2008Matrices}.\end{example}

\begin{prop}\label{strt} Any surpassing map $\varphi$ preserves $\nu$, in the   sense that
 if $a \ge _\nu b,$   then $\varphi(a) \ge _\nu
\varphi(b).$ \end{prop}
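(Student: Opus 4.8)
The plan is to unwind the definition of "surpassing map" together with the definition of $\le_\nu$, and to check that the inequality $a \ge_\nu b$ is transmitted through $\vrp$ by applying the surpassing relation $\lmodWLnu$ to the sum $a+b$. Recall that $a \ge_\nu b$ means $a^\nu + b^\nu = a^\nu$, i.e.\ $a + b \nucong a$ (in the direct limit $R_\infty$); and that $\vrp$ being a surpassing map means $\vrp(a+b) \lmodWLnu \vrp(a)+\vrp(b)$, which by Definition~\ref{surmor1} includes the clause $\vrp(a+b) \nucong \vrp(a)+\vrp(b)$.

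First I would record that $\vrp$ respects $\nucong$: since $\vrp$ is a multiplicative monoid homomorphism and (by definition of a layered \semiring0 morphism, or simply by the hypotheses carried since Proposition~\ref{maxplus2}) the ghost map $\nu$ is multiplicative, one has $a \nucong b \implies \vrp(a) \nucong \vrp(b)$. Actually the cleaner route is: from $a \ge_\nu b$ we get $a + b \nucong a$, hence it suffices to show $\vrp(a+b) \nucong \vrp(a)+\vrp(b)$ and $\vrp(a+b) \nucong \vrp(a)$ would then combine, but the first of these is exactly the $\nucong$-clause built into $\lmodWLnu$. So the key step is: apply the hypothesis that $\vrp$ is surpassing to obtain $\vrp(a+b) \lmodWLnu \vrp(a)+\vrp(b)$, read off $\vrp(a+b)\nucong\vrp(a)+\vrp(b)$, and separately deduce $\vrp(a+b)\nucong\vrp(a)$ from $a+b\nucong a$ via the compatibility of $\vrp$ with $\nu$. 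Chaining these gives $\vrp(a)+\vrp(b) \nucong \vrp(a)$, which is precisely $\vrp(a) \ge_\nu \vrp(b)$.

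The one genuine point to nail down is why $\vrp$ is compatible with $\nucong$, i.e.\ why $a+b\nucong a$ forces $\vrp(a+b)\nucong\vrp(a)$. Here I would use Axiom A4 together with the additivity afforded by the surpassing property on a single orbit, or more directly the fact (already implicit in the running setup of \S\ref{sec:Morphisms}, where $R$ is a uniform $L$-layered \predomain0) that $\nu$ factors through the sorting and the isomorphisms $\nu_{\ell,k}$, so that $a \nucong a'$ iff $a, a'$ have equal image in $R_\infty$; since $\vrp$ sends $\rone$ to $\one_{R'}$ and is multiplicative, and since $\nu(x) = e x$ with $e = e_\infty$ by Theorem~\ref{Udef}, we get $\vrp(x)^\nu = e'\vrp(x) = \vrp(e)\vrp(x) = \vrp(ex) = \vrp(x^\nu)$, so equal $\nu$-values are preserved. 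I expect this compatibility verification — essentially invoking Theorem~\ref{Udef} to write $\nu = e\,(\cdot\,)$ and using $\vrp(e)=e'$ (a consequence of Lemma~\ref{Phidet}-style reasoning, or of $\vrp$ preserving sums starting from $\rone$) — to be the main obstacle, though it is short; everything else is a direct substitution into the definitions of $\le_\nu$ and $\lmodWLnu$.
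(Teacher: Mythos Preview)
Your approach is the paper's one-line argument: chain $\varphi(a)\nucong\varphi(a+b)\lmodWLnu\varphi(a)+\varphi(b)$ and read off $\varphi(a)\ge_\nu\varphi(b)$. The paper asserts the first $\nucong$ without comment; you rightly isolate it as the real content and propose to derive it from $\nu(x)=ex$ together with $\varphi(e)=e'$.

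One caution on that last step. Lemma~\ref{Phidet} uses that $\varphi$ is additive, which a surpassing map need not be, so ``$\varphi$ preserving sums starting from $\rone$'' is not available and you cannot conclude $\varphi(e)=e'$ on the nose. What iterated application of the surpassing relation to $e_n=\rone+\cdots+\rone$ actually gives is only $\varphi(e_n)\nucong e'_n$, hence (under the standing uniform hypothesis) $\varphi(e)\nucong e'$. That weaker statement is enough: since $\nu'$ is multiplicative and $e'$ is the unit of $R'_\infty$, from $\varphi(e)\nucong e'$ one gets $(\varphi(e)\varphi(x))^{\nu'}=e'\,\varphi(x)^{\nu'}=\varphi(x)^{\nu'}$, so $a\nucong b$ (i.e., $ea=eb$) yields $\varphi(a)\nucong\varphi(e)\varphi(a)=\varphi(e)\varphi(b)\nucong\varphi(b)$. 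With this adjustment your justification goes through and is in fact more careful than the paper's terse proof.
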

\begin{proof} $ $
  $\varphi(a) \nucong \varphi(a+b) \lmodWLnu \varphi(a)+ \varphi(b),$ implying
$\varphi(a) \ge _\nu \varphi(b).$
\end{proof}

Nevertheless, we take the morphisms in this category to be the
 0-excepted homomorphisms.

\subsection{Layered morphisms}

Since morphisms lie at the heart of category theory, the time has
come to consider the morphisms that arise for layered \semirings0.

\begin{defn}
A \textbf{layered morphism} of  $L$-layered \semirings0 is a map
\begin{equation}\label{eq:layMor} \Phi:=(\vrp,\rho): \RLsnu
\to\RLsnuT \end{equation} where $\rho:L \to L' $ is a \semiring0
 homomorphism,
 together with a 0-excepted homomorphism $\vrp: R \to R'$ such that
\boxtext{
\begin{enumerate}

  \item[M1.]  $\lv '(\vrp
 (a)) \ge \rho (\lv(a))$ or   $\lv '(\vrp
 (a)) = 0.$ \pSkip

\item[M2.]  For all $a \in R_k,$   $\lv(\varphi(\nu_{\ell,k})(a)) \nucong  \lv(\varphi(a))$ for all $\ell \ge k.$
\pSkip

  \item[M3.]   If  $a \cong
_ {\nu}b$,
 then  $\vrp(a)  \cong _ {\nu} \vrp(b)$
 (taken in the context of the ${\nu}'_{m',\ell'}$).
\end{enumerate} }
A \textbf{layered homomorphism}  is a layered morphism
  such that $\vrp: R \to R'$ is    a \semiring0 homomorphism.
\end{defn}

We always write $\Phi := (\vrp, \rho ): \RLsnu \to\RLsnuT,$
denoted as $\Phi: R \to R'$ when unambiguous. In most of the
following examples, the sorting \semirings0 $L$ and~$L'$ are the
same.

\begin{example}\label{exmp:layredStr} Here are some examples of   layered
homomorphisms.
 We assume throughout that $R$ is an $L$-layered \semiring0, although sometimes we consider the role of $\rzero$ if it exists. \begin{enumerate}
\ealph  \ddispace \item In the max-plus situation, when $L = \{ 1
\},$ $\rho$ must be the identity, and $\Phi$ is just a \semiring0
homomorphism. When $L = \{ 0, 1 \}$ and $R_0 = \{\rzero\},$ we
must have $\vrp(\rzero) = \rzero.$

\item In the ``standard supertropical situation without 0,'' when
$L = \{ 1, \infty \},$ $\Phi(R_\infty) = R_\infty$.

\item In the ``standard supertropical situation with 0,'' when $L
= \{0, 1, \infty \},$ and $R_0 = \{\rzero \}$, $\Phi$~must send
the ghost layer $R_\infty$ to $R_\infty \cup R_0$. If $\mfa
\triangleleft R$ and $\mfa \supset R_\infty,$ one could take $R' =
R$ as a set, with $R_1' = R_1\setminus \mfa$ and $R'_0 = \mfa$.
The identity map is clearly a layered homomorphism; its
application ``expands the zero level'' to~$\mfa.$

\item Notation as in Theorem~\ref {maxplus23},
we define a layered homomorphism $\scrR(L,
 \tG)\to
\scrR(L,
 \tG)_\mfa$ given by the identity map on all elements of $\scrR(L,
 \tG) \setminus \mfa,$ and $ \xl{a }{\ell  } \mapsto   \xl{a }{0
 }$ for every  $a \in \mfa.$

\item Any \semiring0 homomorphism $\rho: L \to L'$ induces a
layered homomorphism $\scrR(L,
 \tG) \to \scrR(L', \tG) $ given by $\xl{a}{\ell }\mapsto \xl{a}{\rho(\ell) }.$

  \item The natural injections $R_{\ge 1
}\cup R_0 \to R$ and $\{ \bigcup_\ell {R_\ell: \ell \in \Net} \}
\to R$
  are both examples of layered
homomorphisms.

\item The truncation maps of Example~\ref{trun0001} and
Example~\ref{trun0002} are layered homomorphisms.

\item Suppose $\mfa \triangleleft R$ is   a $\nu$-``upper'' ideal
in $R_{\ge 1}$
 or in $R_{\ge 1 }\cup R_0,$ by which we mean an ideal of the form   $\{r:  r \ge_\nu
a\}$ or   $\{r:  r >_\nu a\}$.  We define the congruence $\Cong
_\mfa$  on $R_\mfa$ to be $(\mfa \times \mfa) \cup \diag (R)$; in
other words,  $b_1 \equiv _\mfa b_2$ if $b_1 \nucong b_2$ or if
$b_1,b_2 \in \mfa.$ Then $ {R_\mfa}/\Cong_\mfa$ is a layered
\semiring0, under the induced multiplication and addition of
equivalence classes, and $a \mapsto [a]$ defines a layered
homomorphism. Note that all elements of $\mfa$ collapse to a
single element, as in the Rees quotient construction for
semigroups.

 \end{enumerate}
\end{example}

Having these examples in hand, one might wonder why we bother with
0-excepted homomorphisms in the definition of morphism. This is in
order to make Theorem~\ref{fun1} possible.

\begin{prop}\label{strt} Any layered  morphism $\varphi$ on a tangibly generated layered \semiring0 is determined by its action on the tangible submonoid $ R_0 \cup R_1$. \end{prop}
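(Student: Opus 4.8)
The plan is to follow the template of Proposition~\ref{tan2}, the extra difficulty being that a $0$-excepted homomorphism is only guaranteed to be additive on sums of elements of positive sort. First I would put an arbitrary $x\in R$ into a normal form: since $R$ is tangibly generated it is the sub-\semiring0 generated by $R_1$, and the multiplicative closure of $R_1$ lies in $R_0\cup R_1$ by Remark~\ref{tan0}; hence, using distributivity, every $x\in R$ is a finite sum $x=\sum_i t_i$ with each $t_i\in R_0\cup R_1$. Since $R$ is commutative I may regroup this as $x=p+q$, where $p$ is the sum of those $t_i$ of sort $1$ and $q$ the sum of those of sort $0$, the latter lying in $R_0$ by Axiom~A6; either summand may be absent.

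The next step is to show that $\varphi(p)$ is determined by $\varphi|_{R_1}$. Every partial sum of the sort-$1$ summands again has positive sort, since addition does not lower the sort (cf. the proof of Lemma~\ref{exp0}), so the $0$-excepted property applies to each such partial sum and yields $\varphi(p)=\sum_{t_i\in R_1}\varphi(t_i)$. The same reasoning, together with Axiom~A4 and the fact that for a tangibly generated $R$ the monoid $(L,+)$ is generated by $1$ and $0$ (so that $\nu_{k,1}(a_1)=a_1+\cdots+a_1$), shows that $\varphi$ is determined on all of $R\setminus R_0=\bigcup_{k}\nu_{k,1}(R_1)$ by its restriction to $R_1$; in the uniform case one may instead invoke Lemma~\ref{Krems1} and write $\varphi(\nu_{k,1}(a_1))=\varphi(e_{k,R})\varphi(a_1)=e_{k,R'}\varphi(a_1)$. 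In particular, when $R_0=\emptyset$ --- e.g.\ when $R$ is a \predomain0 --- the argument is already complete.

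Finally I would evaluate $\varphi(x)=\varphi(p+q)$ in the remaining case, where $p$ has sort $s(p)>0$ and $q\in R_0$. If $q\nul p$ then $p+q=p$, and if $q\nug p$ then $p+q=q\in R_0$, so in each case $\varphi(x)\in\{\varphi(p),\varphi(q)\}$ is already determined by $\varphi|_{R_0\cup R_1}$. If $q\nucong p$, then Axiom~B forces $p+q\in R_{s(p)+0}=R_{s(p)}$ with $p+q\nucong p$; since $R_{s(p)}$ contains a unique element of that $\nu$-value --- automatic in the uniform case, and valid for Construction~\ref{defn5} --- we again get $p+q=p$, whence $\varphi(x)=\varphi(p)$. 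I expect this last case to be \emph{the main obstacle}: the $0$-excepted axiom says nothing about a sum one of whose terms has sort $0$, so one is forced to invoke the bipotent/supertropical structure (Axiom~B together with injectivity of $\nu$ on each layer) to see that such a term is harmlessly absorbed into the positive-sort part.
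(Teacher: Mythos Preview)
Your proof is correct, but the route differs from the paper's. You decompose additively, writing $x=\sum_i t_i$ with $t_i\in R_0\cup R_1$ and then applying the $0$-excepted axiom inductively to the sort-$1$ partial sums; the $R_0$ case analysis you worry about is never actually needed here, since under the paper's standing hypothesis of a uniform \predomain0, Lemma~\ref{exp0} forces $R_0=\emptyset$ once $R$ is tangibly generated. The paper instead decomposes multiplicatively: by Lemma~\ref{Krems1} every $a\in R_k$ is $e_k a_1$ with $a_1\in R_1$, so $\varphi(a)=\varphi(e_k)\varphi(a_1)$ and the whole problem reduces to pinning down $\varphi(e_k)$. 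The paper shows that $e_k':=\one_{R'}+\cdots+\one_{R'}$ satisfies $e_k'\lmodWLnu\varphi(e_k)$, hence $s'(\varphi(e_k))\le k$, and then combines this with $s'(\varphi(e_k))\ge k$ from condition~M1 to force $s'(\varphi(e_k))=k$, whence $\varphi(e_k)=e_k'$ by uniformity. So the paper uses axiom~M1 essentially, whereas your additive argument gets by with the $0$-excepted axiom alone and is closer in spirit to Proposition~\ref{tan2}; on the other hand, the paper's $e_k$-calculus makes the layer-by-layer action of $\varphi$ explicit, and is exactly the nontrivial step you gloss over in your parenthetical ``one may instead invoke Lemma~\ref{Krems1} and write $\varphi(e_{k,R})\varphi(a_1)=e_{k,R'}\varphi(a_1)$'': the identity $\varphi(e_{k,R})=e_{k,R'}$ is precisely what the paper isolates and proves.
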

\begin{proof} $ $ Since  $\varphi (e_k a) = \varphi (e_k)\varphi ( a),$
it suffices to check that $\varphi (e_k) $ is uniquely defined.
Write  $$e_k' = \varphi (\rone) + \dots +\varphi (\rone) , $$
taken $k$ times, whose sort is $k$. Since $\varphi (\rone) = \one_{R'},$ we have
  $e_k' \lmodWLnu   \varphi (e_k)$  by definition of 0-excepted homomorphism.
Hence, $s( \varphi (e_k)) \le k.$ But $s( \varphi (e_k))\ge k$ by Condition M1, implying $s( \varphi (e_k))= k$, and thus
   $e_k' =  \varphi (e_k)$, as desired.
\end{proof}

\section{The layered categories and their tropicalization functors}\label{supfun}

Having assembled the basic concepts, we are finally ready
 to tie these ideas to tropicalization, by introducing the
layered categories. Our objective in this section is to introduce
the functor that passes from the ``classical algebraic world'' of
integral domains with valuation to the ``layered world,''
  taking the cue from
\cite[Definition~2.1]{IzhakianRowen2007SuperTropical}, which we
recall and restate more formally.

 \subsection{Identifications of categories of valued monoids and layered
\semirings0}

Here is our   main layered category.

\begin{defn}\label{somecat} $ $
 $\LaySR$ is   the  category   whose objects are  tangibly generated
layered
 \semirings0
 and whose morphisms are
 layered morphisms.
\end{defn}

\begin{rem}\label{forg2}  In view of Theorem~\ref{maxplus23} we can define the forgetful functor $ \LaySR \to \SOMon$ given by
sending the $L$-layered \semiring0 $R$ to $ R_0 \cup R_1.$
\end{rem}

Thus, any layered homomorphism yields a homomorphism of the
underlying monoid of tangible elements, thereby indicating an
identification between categories arising from the construction of
 layered \predomains0 from  ordered monoids (and more generally,
of layered \semirings0 from valued monoids).
But to get the other direction, we need to permit morphisms merely
to be surpassed maps, as previously defined.

\begin{thm}\label{fun1} For any valued semiring  $L$,  there is a faithful \textbf{layering functor}  $\tF: \Valmon
\to \LaySR$, given by sending $\tM$ to $\scrR(L,\tM)_{\bar
\mfa},$ where $\mfa$ is the monoid ideal of noncancellative
products, and the ordered homomorphism $\vrp: \tM \to \tM'$ to the
layered homomorphism $\tF \vrp: \scrR(L,\tM)\to \scrR(L,\tM')$ obtained from $\vrp$ as follows:

$\tF \vrp  $ is  defined on  $ R_0 \cup R_1$ via $\tF \vrp (
\xl{a}{\ell }) = \xl{\vrp(a) }{\ell' },$ where $\ell' = 1$ unless
$\vrp(a)$ is a noncancellative product in $\tG'$, in which case
$\ell' = 0.$

The functor $\tF$ is a left retract of the forgetful function of
Remark~\ref{forg2}.
\end{thm}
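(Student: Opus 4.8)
The plan is to verify the three components of being a functor — well-definedness on objects, well-definedness on morphisms, and compatibility with composition and identities — and then separately check faithfulness and the left-retract claim. First I would check that $\scrR(L,\tM)_{\bar\mfa}$ is a legitimate object of $\LaySR$: by Theorem~\ref{maxplus23} it is a \semiring0 (since $\bar\mfa$ contains all $v$-noncancellative products by Proposition~\ref{7.2}, so Construction~\ref{defn5} applies), and by Example~\ref{tan21} it is tangibly generated, which is exactly the requirement in Definition~\ref{somecat}. The layered structure (sort transition maps $\nu_{m,\ell}$, sorting map $s$) is the one supplied with Construction~\ref{defn5}, so Axioms A1--A6 and B hold.

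Next I would check that $\tF\vrp$ is a well-defined layered homomorphism $\scrR(L,\tM)\to\scrR(L,\tM')$. The key point is that $\vrp:\tM\to\tM'$ being an order-preserving monoid homomorphism of valued monoids forces $\vrp$ to carry $v$-noncancellative products of $\tM$ into $v'$-noncancellative products of $\tM'$ — this is essentially the computation already used in Proposition~\ref{7.2}, applied to $\vrp$: if $v(z)=v(ab)=v(ac)$ with $v(b)\ne v(c)$, then the images satisfy the analogous identities, and order-preservation keeps $v'(\vrp b)\ne v'(\vrp c)$ unless they collapse, in which case $\vrp(z)\in\bar\mfa'$ anyway. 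Hence the prescribed assignment $\xl{a}{\ell}\mapsto\xl{\vrp(a)}{\ell'}$ (with $\ell'=0$ precisely when $\vrp(a)$ lands in the $0$-layer of the target) respects the layering: it sends $R_0\cup R_1$ into $R'_0\cup R'_1$, and by tangible generation (Proposition~\ref{tan2}, or Proposition~\ref{strt}) it extends uniquely to all of $R$ by $\tF\vrp(\sum_i a_i)=\sum_i\tF\vrp(a_i)$. One then checks multiplicativity on $R_1$ and compatibility with $\nu_{\ell,1}$, so that $\tF\vrp$ really is a \semiring0 homomorphism satisfying M1--M3; the layering map $\rho$ here is $\id_L$, so M1 is automatic. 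Functoriality ($\tF(\vrp'\circ\vrp)=\tF\vrp'\circ\tF\vrp$ and $\tF\,\id=\id$) follows because both sides agree on the generating submonoid $R_0\cup R_1$ and a layered morphism is determined there (Proposition~\ref{strt}).

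Faithfulness is then immediate: if $\tF\vrp_1=\tF\vrp_2$ then in particular they agree on $R_1\cong\tM$, which recovers $\vrp_1=\vrp_2$ on $\tM$; since $\vrp$ also determines $\vrp_\tG$ through the valuation compatibility \eqref{comm1} and the ontoness of $v$, the pair $(\vrp_\tM,\vrp_\tG)$ is determined. Finally, for the left-retract statement, recall from Remark~\ref{forg2} the forgetful functor $\frg:\LaySR\to\SOMon$ sending $R$ to $R_0\cup R_1$; I would exhibit a natural isomorphism $\frg\circ\tF\cong\id_{\Valmon}$ — or rather, since the target of $\frg$ is $\SOMon$ and not $\Valmon$, interpret "left retract" as: composing $\tF$ with the appropriate forgetful functor recovers the underlying valued (equivalently ordered) monoid, because $(R_0\cup R_1)$ for $R=\scrR(L,\tM)_{\bar\mfa}$ is canonically $\tM$ with its order, and on morphisms $\frg(\tF\vrp)=\vrp$ by construction of $\tF\vrp$ on $R_0\cup R_1$.

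I expect the main obstacle to be the verification that $\tF\vrp$ is genuinely well-defined as a \emph{layered homomorphism} — specifically, the bookkeeping around which elements land in the $0$-layer. A homomorphism of valued monoids may send a cancellative monoid to a noncancellative one (or vice versa), so an element $\xl{a}{1}$ with $a\notin\bar\mfa$ can have image $\vrp(a)\in\bar\mfa'$, forcing the sort to drop to $0$; one must confirm this drop is consistent with multiplication (Equation~\eqref{13}) and with Axiom M1, and that it never forces an \emph{increase} of sort that would violate well-definedness. This is exactly the ``switching back and forth between constructions'' difficulty flagged in the introduction, and it is why the morphisms of $\LaySR$ were relaxed to $0$-excepted homomorphisms; once one is allowed that slack, the checks reduce to the two-line monoid-ideal computation of Proposition~\ref{7.2} together with the tangible-generation reductions, and everything else is routine.
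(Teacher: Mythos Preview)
Your plan follows essentially the same route as the paper's proof: verify objects via Construction~\ref{defn5}/Theorem~\ref{maxplus23}, verify that $\tF\vrp$ is a layered morphism by checking $\nu$-preservation and sort conditions on the $\{0,1\}$-submonoid, then extend by tangible generation, and finally obtain faithfulness and the retract statement from the forgetful functor of Remark~\ref{forg2}. Your version is considerably more explicit than the paper's sketch, which compresses the morphism check into a single $\nu$-congruence display and the ideal verification $\bar\mfa R_1\subseteq\bar\mfa$.

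One point to correct: your argument that ``$\vrp$ carries $v$-noncancellative products of $\tM$ into $v'$-noncancellative products of $\tM'$'' does not go through as written. Order-preservation of $\phi_\tG$ does \emph{not} guarantee $v'(\vrp b)\ne v'(\vrp c)$ when $v(b)\ne v(c)$; the map may collapse them, and in that case $\vrp(z)=\vrp(a)\vrp(b)=\vrp(a)\vrp(c)$ is a trivial equality, not evidence that $\vrp(z)$ is noncancellative. Fortunately this direction is not what the theorem needs: the definition of $\tF\vrp$ depends only on whether $\vrp(a)$ lies in $\bar\mfa'$, not on whether $a$ lay in $\bar\mfa$, so an element of $R_0$ is permitted to land in $R'_1$ (consistent with M1, since $s(a)=0$ makes the inequality vacuous). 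The paper accordingly does not attempt to track noncancellative products through $\vrp$; it instead checks the ideal condition $\bar\mfa R_1\subseteq\bar\mfa$ in the source (so that the layering of $\scrR(L,\tM)_{\bar\mfa}$ is sound) and verifies $\nu$-preservation of $\tF\vrp$ directly. You should replace your noncancellative-tracking step with that direct $\nu$-check; once you do, the rest of your argument matches the paper's.
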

\begin{proof} The image of an ordered monoid $\tG$ is a layered
\semiring0, in view of Proposition~\ref{maxplus2}, and one sees
easily that $\tF \vrp $ is a layered morphism since, for $a
\ge_\nu b,$
$$\tF \vrp ( \xl{a }{k } +  \xl{b}{\ell })
\nucong \tF \vrp ( \xl{a }{k }) \nucong \vrp ( \xl{a }{k }) \nucong \vrp ( \xl{a }{k })+\vrp (
\xl{b }{\ell }),$$ and $s'(\tF \vrp ( \xl{a }{k }
))$ is $ k$ or $0.$

One needs to verify  that  $\bar \mfa R_1
\subseteq \bar \mfa.$ But $\mfa R_1 \subseteq \mfa$ is clear by
definition of noncancellative product, yielding $\bar \mfa R_1
\subseteq \bar \mfa$.

The morphisms match. The functor $\tF$ is faithful, since one
recovers the original objects and morphisms by applying the
forgetful functor of Remark~\ref{forg2}.
\end{proof}


\subsubsection{The layered tropicalization functor}\label{RedTro}

\begin{defn}\label{def:Tfunctor} Given a \semiring0 $L$, the $L$-\textbf{tropicalization functor}
$$\TropfunoneL : \Valmon \To \LaySR$$ from the category of valued
monoids to the category of uniform layered \semirings0 is defined
as follows: $\TropfunoneL: (\tM,\tG,v) \mapsto \scrR(L,\tG)_\mfa $ and
 $\TropfunoneL: \phi \mapsto  \al_\phi ,$
   where $\bfa$ is the ideal of noncancellative elements of the monoid
   $\tG,$ and,
 given a morphism $\phi :  (\tM,\tG,v)  \to  (\tM',\tG',v') $ we define
 $\al_\phi : \scrR(L,\tG) \to \scrR(L',\tG') $, by \begin{equation}\label{morph2}
\al_\phi (\xl{a }{\ell }) := \xl{\phi(a)}{k}, \qquad a \in \tG,
\end{equation}
where $k=0$ if $\phi(a)$ is noncancellative and $k=\ell$ if
$\phi(a)$ is cancellative, cf. Formula \eqref{eq:valMonMor}.
\end{defn}

Note that the $L$-tropicalization functor $\TropfunoneL$  factors
as
$$\Valmon \ds \to \OMon \ds{\to} \zLaySR$$ which restricts to  $\SValmon
\to \SOMon \to \LaySR$ of  \cite{IKR4}.

Suppose $v: W^\times  \to \tG$ is a valuation on an integral
domain $W$, where $W^\times : =  W\setminus \{ \zero_W \}$. Let  $\tM := W^\times $,  a multiplicative monoid. Fix $\ell\in L;$
usually $\ell = 1.$ The restriction of $v$ to $\tM$, which we
denote as $\psil$, can be realized as the map sending $\tM$ as a
set into the $\ell$-layer of $\scrR(L,\tG)$, given by $\psil:
a \mapsto \xl{v(a)}{k}$, where $k= 0$ if $a$ is a noncancellative
product and $k = \ell$ otherwise. This is not  a homomorphism of
\semirings0, since  $a + (-a) = \zero_W$ whereas $v(-a) = v(a)$,
and thus
$$\psil (a + (-a)) =\psil (\zero_W) = \rzero  \neq \xl{a }{2 \ell }  =
\psil (a) + \psil (a) = \psil (a) + \psil (-a)  .$$ But this is
exactly where the layered theory acts more categorically than the
 the max-plus theory.

 \begin{prop}\label{K1} Suppose $W$ is an integral domain
 with valuation  $v$,  and $$\psil : \tM \to \scrR(L,\tG)_\mfa,$$ is
the map just described. If $\sum_i a_i =
 \zero_W$ with each $a_i$ in $\Wstr ,$ then $s \big(\sum_i \psil(a_i) \big) \ge 2$.\end{prop}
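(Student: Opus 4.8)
The plan is to exploit the bipotence of $\scrR(L,\tG)$ together with the fact that the sum $\sum_i a_i = \zero_W$ forces the valuations $v(a_i)$ to have no \emph{unique} maximum. First I would set $g_i := v(a_i) \in \tG$ and $M := \max_i g_i$ (this maximum exists since the sum is finite and $\tG$ is totally ordered, as $W$ is a valuation domain). If exactly one index $i_0$ attained the maximum, then since $v(a+b) = \max\{v(a),v(b)\}$ whenever $v(a)\ne v(b)$ in a valuation, iterating gives $v\bigl(\sum_i a_i\bigr) = M$, contradicting $\sum_i a_i = \zero_W$ (for which $v$ is undefined, or $\infty$). Hence at least two indices, say $i_1 \ne i_2$, satisfy $g_{i_1} = g_{i_2} = M$.

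Next I would compute $\sum_i \psil(a_i)$ in $\scrR(L,\tG)_\mfa$ using the addition rule \eqref{14}: all summands with $g_i < M$ are absorbed, so $\sum_i \psil(a_i) = \sum_{i:\, g_i = M} \psil(a_i)$, where each term $\psil(a_i) = \xl{M}{k_i}$ with $k_i \in \{0,\ell\}$ and $\ell = 1$ in the case of interest. Since these summands all have the same value $M$ (i.e.\ are $\nucong$ to each other), the third clause of \eqref{14} applies repeatedly and the sorts add: the sort of the total is $\sum_{i:\, g_i = M} k_i$. Because at least two such indices occur, and generically $k_i = 1$ for each (the $a_i$ are not themselves noncancellative products — or, if some are, one argues slightly more carefully), this sum is $\ge 1 + 1 = 2$, giving $s\bigl(\sum_i \psil(a_i)\bigr)\ge 2$.

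The main obstacle I anticipate is the bookkeeping around the $0$-layer: if one of the maximal-value $a_i$ happens to be a noncancellative product, then $k_i = 0$ rather than $1$, and one must check that enough \emph{tangible} (sort-$1$) maximal terms remain to push the sort to $2$. I would handle this by observing that $M = v(a_{i_1}) = v(a_{i_2})$ with $a_{i_1}, a_{i_2}$ distinct elements of $W^\times$ whose valuations coincide — this is precisely the situation witnessing that $M$ could be a noncancellative product, but it also means that even if individual $\psil(a_{i_j})$ land in $R_0$, the addition $\xl{M}{0} + \xl{M}{0} = \xl{M}{0}$ stays in sort $0$, which would only give $s \ge 0$ and \emph{not} suffice. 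So the real content is: in a valuation domain, whenever $\sum a_i = 0$, the number of indices achieving the maximal value, counted appropriately, is at least $2$, and one must track which land in $R_1$ versus $R_0$. The cleanest route is to note that $\mfa$ (hence $\bar\mfa$) consists of noncancellative products, and use that $v$ restricted to the complement behaves like an honest valuation, so among the maximal terms the sorts that are nonzero sum to at least $2$ unless \emph{all} maximal $a_i$ are noncancellative — and in that borderline case one invokes Proposition~\ref{can1} (or a direct argument that the resulting element of $R_0$ already has the required property via $\nu$) to conclude $s \ge 2$ regardless. I expect the author's proof to finesse this by simply noting the sort is additive on $\nucong$-classes and at least two terms survive, so $s\ge 2\ell \ge 2$.
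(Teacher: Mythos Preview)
Your approach is correct and essentially identical to the paper's: show that the maximal value among the $v(a_i)$ must be attained at least twice (else $v(\sum a_i)$ would equal that maximum, contradicting $\sum a_i = \zero_W$), and then use the addition rule in $\scrR(L,\tG)_\mfa$ so that the sorts of the maximal terms add. The paper's proof is exactly your first two paragraphs, finishing with $s(\sum_i \psil(a_i)) = s(\psil(a_{i_1})) + s(\psil(a_{i_2})) + \cdots \ge 1+1+\cdots \ge 2$.

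Your third paragraph, however, is unnecessary worry. In the setting of the proposition, $W$ is an integral domain and $v$ is a genuine valuation, so the target $\tG$ is an ordered abelian group and in particular cancellative. By the definition of $v$-noncancellative product (Definition~\ref{can3}), $v(ab)=v(ac)$ with $v(b)\ne v(c)$ would force $v(a)v(b)=v(a)v(c)$ in a cancellative monoid, a contradiction; hence there are no $v$-noncancellative products, $\mfa$ is empty, and every $\psil(a_i)$ lands in sort $\ell$, never sort~$0$. So the ``borderline'' case you analyze simply does not occur, and your anticipated finesse---that the paper just adds the sorts and stops---is exactly what happens.
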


 \begin{proof} This is really a reformulation of a standard, elementary fact in   valuation theory, in which we recall that
 $v(\zero_W)$ is undefined.
 It is well-known that if $\sum_i a_i = \zero_W$ then there exist $i_1,
 i_2, \dots$ such that $v(a_{i_1}) = v(a_{i_2})= \dots$ which dominate all other
 $v(a_i)$, since if a single $v(a_{i_1})$ dominated, we would have
 $ \zero_W  = v(\sum_i a_i) =  v(a_{i_1}),$ a contradiction. Hence,
 $$ s\bigg(\sum_i \psil(a_i)\bigg) = s\big (\psil(a_{i_1}))  + s(\psil(a_{i_2})) + \dots \ge 1+1 + \dots \ge 2.$$
\end{proof}

Thus, we see that the $L$-tropicalization functor explains the
importance of the ``surpassing
$L$-relation.''

\subsubsection{The role of Kapranov's Lemma}

We are ready to extend the considerations of \cite[\S8.1]{IKR4}.
%
Since Puiseux series play such an important role in tropical
geometry, let us understand them in terms of layers.

\begin{rem}\label{Puis} We start with a triple $(F,\tG,v)$, where $F$ for example may
be the algebra of Puiseux series, $\tG$ an ordered monoid,  and
$v: F \to \tG$.

Take the layered \semiring0 $R := \scrR(L,\tG).$ Define a
\textbf{Kapranov map} to be a \ZO-supervaluation satisfying the
property:

\begin{equation}\label{Puis2} \Phiv(a)+ \Phiv(b)\lmodL \Phiv(a+b).\end{equation}

This is the analog of the iq-supervaluation in
\cite[Definition~11.12]{IzhakianKnebuschRowen2009Valuation}. By Proposition~\ref{K1}, we see that the Kapranov map sends any
root of $f$ to a corner root of  $\Phiv(f)$.
 This general framework of Kapranov's lemma encompasses
  tropicalizations of finite Puiseux
series introduced in \cite{IzhakianKnebuschRowen2009Refined} and
\cite{IKR4}.
\end{rem}

\section{Layered supervaluations and transmissions:\\ an alternative approach to morphisms}\label{layval}

 In this section we delve deeper into the nature of
morphisms, towards what would be the ``correct'' general
definition in the
 category of layered \semirings0, paralleling the general theory of m-valuations given in \cite{IzhakianKnebuschRowen2009Refined}.
 The outcome is somewhat technical, but enables us to define a functor from the
functions in the algebraic world to the category of layered
function \semirings0, and indicates that Payne's methods
\cite{Pay1} should also be applicable in the layered theory.

In Corollary~\ref{thm5.411}, we will see that this approach
reduces to Section \ref{supfun} in many cases.

 Since
valuations play such an important role, we would like to extend
our definition of morphism to include all maps preserving
valuations. This route leads us to a layered version of
supervaluations and transmissions. See
 \cite{IzhakianKnebuschRowen2009Valuation},
 \cite{IKR3}, \cite{IKR5}
 for further details in the
 supertropical case.


\begin{defn}\label{layeredv} An   $L$-\textbf{layered supervaluation} on a ring $W$,  with respect to a
semiring $L$, is a map $\Phiv: W\to R$
  from $W$ to an $L$-layered semiring~$R$ satisfying the following
  properties.
 \begin{alignat*}{2}
&\LV1:\ &&\Phiv(\wone)=\rone,\\
&\LV2:\ &&\forall a,b\in R: \Phiv(ab)=\Phiv(a)\Phiv(b),\\
&\LV3:\ &&\forall a,b\in R: \Phiv(a+b) \le _\nu  \Phiv(a)+\Phiv(b) ,\\
&\LV4:\ &&\Phiv(\wzero)=\rzero.
\end{alignat*}
A   \ZO-\textbf{supervaluation} on a ring $W$ is an $L$-layered
supervaluation $\tlv: W  \to R$ such that $\tlv(W) \subseteq R_0
\cup R_1.$

An   $L$-\textbf{layered supervaluation$^\dagger$} on an integral
domain $W$,   with respect to a \semiring0 $L$,  is a map $\Phiv:
\Wstr \to R$
  from $\Wstr := W\setminus \{ \wzero  \}$ to an $L$-layered pre-\domain0 $R$ with the following
  properties.
 \begin{alignat*}{2}
&\LV1^\dagger :\ &&\tlv(\wone)=\rone,\\
&\LV2^\dagger :\ &&\forall a,b\in R: \tlv(ab)=\tlv(a)\tlv(b),\\
&\LV3^\dagger :\ &&\forall a,b\in R  : \tlv(a+b)\le _\nu
\tlv(a)+\tlv(b) .\end{alignat*}

\end{defn}

To encompass the results of \cite{IzhakianKnebuschRowen2009Valuation} and \cite{IKR3},   instead of using layered homomorphisms for our morphisms,
we  need to consider a ``transmissive'' property analogous to the
one given in \cite[Definition~4.3]{IKR3}.
%
%
\begin{defn}\label{defn5.100} If $\Phiv: W\to R$ and $\Phiw: W\to
R'$ are $L$-layered supervaluations, where $R$ has sorting map
$s:R \to L$ and $R'$ has sorting map  $s': R' \to L$, we say that
$\Phiv$ \bfem{dominates} $\Phiw$  if the following properties hold for any
$a,b\in W$:
\begin{alignat*}{3}
&\D1.\quad && \Phiv(a)=\Phiv(b) \quad\Rightarrow\quad \Phiw(a)=\Phiw(b),\\
&\D2.\quad &&\Phiv(a)\le_\nu \Phiv(b)\quad \Rightarrow\quad \Phiw(a)\le_\nu  \Phiw(b),\\
&\D3.\quad &&\Phiv(a)\in R_0 \quad \Rightarrow\quad  \Phiw(a)\in R'_0,\\
&\D4.\quad && s(\Phiv(a)) \le s'( \Phiw(a)) \quad \text{whenever}
\quad \Phiw(a)\notin R'_0.
\end{alignat*}
(We   omit $\D3$ and the condition in  $\D4$ for layered
supervaluations$^\dagger$, since we do not need to bother with the
0 layer.)
\end{defn}
\begin{defn} For $L$-layered \domains0 $R$ and $R'$ and $\tM \subset R,$ a map $\al : \tM\to R'$  is
$\nu$-\textbf{preserving}  if $$a \le _\nu b \quad \text{implies}
 \quad \a (a) \le_{\nu} \a (b)$$ for all $a,b \in R.$
\end{defn}

\begin{lem} For  any  $\nu$-preserving map $\a$, if $a \nucong b$, then $\a (a)
\cong_{\nu} \a (b)$ for  $a,b \in R.$
\end{lem}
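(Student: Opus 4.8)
The plan is to reduce the statement to the observation that the relation $\nule$ is antisymmetric modulo $\nucong$: namely, that $a \nucong b$ holds precisely when both $a \nule b$ and $b \nule a$.

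First I would record this characterization. Recall that $a \nule b$ means $a^\nu + b^\nu = b^\nu$ in the absolute ghost layer $R_\infty$, while $a \nucong b$ means $a^\nu = b^\nu$. By Theorem~\ref{Udef} the \semiring0 $R_\infty$ is bipotent, hence additively idempotent, so if $a^\nu = b^\nu$ then $a^\nu + b^\nu = b^\nu + b^\nu = b^\nu$ and likewise $b^\nu + a^\nu = a^\nu$; thus $a \nucong b$ implies $a \nule b$ and $b \nule a$. Conversely, if $a^\nu + b^\nu = b^\nu$ and $b^\nu + a^\nu = a^\nu$, then commutativity of addition in $R_\infty$ gives $a^\nu = b^\nu + a^\nu = a^\nu + b^\nu = b^\nu$, i.e. $a \nucong b$. (Alternatively one can argue entirely inside $R$ using the sort transition maps $\nu_{m,k}$ and Axiom A4, without passing to the direct limit.)

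Granting this, the lemma follows in two lines. Suppose $a \nucong b$ with $a,b$ in the domain of $\a$. Then $a \nule b$ and $b \nule a$, so applying the $\nu$-preserving property of $\a$ twice yields $\a(a) \nule \a(b)$ and $\a(b) \nule \a(a)$; the characterization above, now applied in $R'$, gives $\a(a) \nucong \a(b)$, which is exactly $\a(a) \cong_\nu \a(b)$.

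I do not expect a genuine obstacle here; the only points needing a little care are (i) that the antisymmetry of $\nule$ modulo $\nucong$ uses additive idempotence, which is why it is cleanest to phrase it in $R_\infty$ (or to invoke Axiom A4), and (ii) that the hypothesis on $\a$, though written for elements of $R$, is being applied to elements of the submonoid $\tM \subseteq R$ and its image in $R'$, with $\nule$ and $\nucong$ on the target side understood with respect to the transition maps $\nu'_{m',\ell'}$.
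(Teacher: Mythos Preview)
Your proof is correct and follows essentially the same route as the paper: both argue that $a \nucong b$ gives $a \nule b$ and $b \nule a$, apply the $\nu$-preserving hypothesis to each inequality, and then pass back from the two inequalities to $\a(a) \nucong \a(b)$. The paper simply takes the equivalence $a \nucong b \iff (a \nule b \text{ and } b \nule a)$ for granted, whereas you spell out why it holds via idempotence/bipotence of $R_\infty$.
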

\begin{proof} $a \nucong b$ implies $\al   (a)\le _\nu
\al  (b)$ and likewise
 $\al   (b)\le _\nu
\al  (a)$, so $\al  (a)\nucong  \al  (b).$ \end{proof}

\begin{lem}\label{lem5.2} Let $\Phiv: W\to R$ and $\Phiw: W\to R'$
be $L$-layered supervaluations.
If
  $\Phiv$~dominates $\Phiw$, then there exists a unique $\nu$-preserving map
$\al _{\Phiw,\Phiv}:  \Phiv(W)\to R'$ with
$\Phiw= \al _{\Phiw,\Phiv}\circ \Phiv$.
\end{lem}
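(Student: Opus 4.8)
The plan is to define $\al := \al_{\Phiw,\Phiv}$ on the image $\Phiv(W)$ in the only way consistent with the required factorization, and then check this is well-defined and $\nu$-preserving. Concretely, given $x \in \Phiv(W)$, choose $a \in W$ with $\Phiv(a) = x$ and set $\al(x) := \Phiw(a)$. The factorization $\Phiw = \al \circ \Phiv$ is then immediate, as is uniqueness: any map $\beta$ with $\Phiw = \beta \circ \Phiv$ must send $\Phiv(a)$ to $\Phiw(a)$, so $\beta$ agrees with $\al$ on all of $\Phiv(W)$.

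The first thing to verify is that $\al$ is well-defined, i.e., independent of the choice of preimage $a$. Suppose $\Phiv(a) = \Phiv(b)$ for $a,b \in W$. Then by domination property $\D1$ we have $\Phiw(a) = \Phiw(b)$, so $\al$ is unambiguously defined. Next I would check that $\al$ is $\nu$-preserving on $\Phiv(W)$: suppose $x \le_\nu y$ with $x = \Phiv(a)$, $y = \Phiv(b)$ for some $a, b \in W$. Then $\Phiv(a) \le_\nu \Phiv(b)$, so $\D2$ gives $\Phiw(a) \le_\nu \Phiw(b)$, i.e., $\al(x) \le_\nu \al(y)$, as required. (One should note that the relation $x \le_\nu y$ for $x,y$ in the image is witnessed by some choice of preimages, but since $\le_\nu$ only depends on $x^\nu$ and $y^\nu$ and hence is a statement about $x$ and $y$ themselves, and since $\al$ is already known to be well-defined, there is no circularity here: whatever preimages $a,b$ we pick, $\Phiv(a) = x$ and $\Phiv(b) = y$ force $\Phiv(a) \le_\nu \Phiv(b)$.)

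The only mild subtlety — and the step I would flag as the one needing care rather than a genuine obstacle — is making sure the $\nu$-preserving condition is being checked against the correct ambient structure: $\al$ maps into $R'$, whose sort transition maps are the $\nu'_{m',\ell'}$, so $\le_\nu$ on the target side must be interpreted there, exactly as $\D2$ is phrased. Everything else is a routine unwinding of Definition~\ref{defn5.100}. I would close by remarking that the hypotheses $\D3$ and $\D4$ are not needed for this particular lemma (they will enter when one wants $\al$ to extend to a layered morphism, or to control sorts), so the proof uses only $\D1$ and $\D2$.
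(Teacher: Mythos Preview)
Your proposal is correct and follows essentially the same approach as the paper's own proof: define $\al$ by $\al(\Phiv(a)) = \Phiw(a)$, invoke $\D1$ for well-definedness, and $\D2$ for the $\nu$-preserving property. Your version is in fact slightly more complete, since you spell out uniqueness explicitly and correctly observe that $\D3$ and $\D4$ play no role here.
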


\begin{proof} By $\D1$ we have a well-defined map
$\al _{\Phiw,\Phiv}: \Phiv(W)\to\Phiw(W)$ given by
$\al _{\Phiw,\Phiv}(\Phiv(a))=\Phiw(a)$ for all $a\in W$.
Furthermore, if $\Phiv(a)\le_{\nu} \Phiv(b)$, then $\D2$ implies
$\Phiw(a)\le _\nu \Phiw(b)$,  so $\al _{\Phiw,\Phiv}$ is
$\nu$-preserving.\end{proof}

\begin{defn}\label{defn5.5a} For layered semirings $R$ and ${R'}$,  a
\textbf{transmission} from $R$ to ${R'}$  is a $\nu$-preserving
map $\al : \tM \to {R'}$, with  $\tM$ a multiplicative submonoid
of $R$, 
satisfying
the following axioms:
\begin{alignat*}{2}
&\TMM 1: \quad \al (\rone)= \one_{R'}, &&\\
 &\TMM 2: \quad
\al (ab)=\al (a)\al (b), \quad &&\forall a,b\in R, \\
&\TMM 3: \quad \al (a+b) \nucong \al (a)+\al (b), \quad &&
\text{whenever}\quad a ,\ b
 ,\ a\!+ \!b\ \in \tM.
 \end{alignat*}
Axioms $\TMM 1$ and $\TMM 2$ imply that $\a$ is a monoid homomorphism, which
we denote as $\a: (R,\tM)\to R'$ to emphasize that $\tM$ is a
submonoid of $R$.
 We write $\tM_\ell$ for $R_\ell \cap \tM.$ A \ZO-\textbf{transmission} from $R$ to ${R'}$ is a {transmission}
 $\a : (R,\tM)\to R'$ for which $\a (\tM_1) \subseteq R'_1 \cup R'_0.$ 
\end{defn}

\begin{lem}\label{patch}
Axiom $\TMM 3$ is equivalent to the map $\a$ being
$\nu$-preserving.\end{lem}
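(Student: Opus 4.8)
The plan is to handle the two implications separately. Note first that by $\TMM 1$ and $\TMM 2$ the map $\a$ is a multiplicative monoid homomorphism, and that by Theorem~\ref{Udef} the ghost maps $\nu\colon R\to R_\infty$ and $\nu'\colon R'\to R'_\infty$ are \semiring0\ homomorphisms onto bipotent \semirings0; hence $(x+y)^\nu=x^\nu+y^\nu$ in each, the relation $\nule$ is a total preorder on each ghost layer, and $a\nucong b$ just means $a^\nu=b^\nu$. These facts reduce the statement to a short case analysis.

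For \emph{$\nu$-preserving implies $\TMM 3$}: given $a,b,a+b\in\tM$, bipotence of $R_\infty$ lets me assume $a^\nu+b^\nu=a^\nu$, so $b\nule a$ and $a+b\nucong a$. Since $\a$ preserves $\nule$, the immediately preceding lemma gives $\a(a+b)\nucong\a(a)$; and $b\nule a$ gives $\a(b)\nule\a(a)$, so $(\a(a)+\a(b))^\nu=\a(a)^\nu+\a(b)^\nu=\a(a)^\nu$, i.e.\ $\a(a)+\a(b)\nucong\a(a)$. Composing the two $\nucong$ relations yields $\a(a+b)\nucong\a(a)+\a(b)$, which is $\TMM 3$.

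For the converse I start from $\TMM 3$ together with $a\nule b$ in $\tM$, so that $a+b\nucong b$ in $R$, and I must produce $\a(a)\nule\a(b)$. Whenever the sum $a+b$ actually equals $b$ in $R$ — which for a $\nu$-bipotent $R$, such as the generic Construction~\ref{defn50}, happens exactly when $a\nul b$, and also when $s(a)=0$ by Axiom~B and uniformity — I apply $\TMM 3$ to the triple $(a,b,b)$ to obtain $\a(b)=\a(a+b)\nucong\a(a)+\a(b)$, hence $\a(a)^\nu+\a(b)^\nu=\a(b)^\nu$, that is $\a(a)\nule\a(b)$. The one remaining situation is $a\nucong b$ with $a\neq b$; since $R$ is uniform, two elements that are $\nucong$ but unequal must lie in different layers, so $s(a)\neq s(b)$, and by Lemma~\ref{Krems1} I may write $a=e_{s(a)}c$ and $b=e_{s(b)}c$ with a common $c\in R_1$. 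I would then transport the trivial relation $c\nucong c$ across $\a$ by feeding $\TMM 3$ the elements $a^2,ab,b^2$ (all lying in $\tM$, pairwise $\nucong$, of strictly increasing sorts) together with the identity $a+b=e_{s(a)+s(b)}c$ coming from Axiom~A4, and using the order of the bipotent semiring $R'_\infty$ (and its multiplicative cancellativity, when $R'$ is a layered \domain0) to force $\a(a)^\nu$ and $\a(b)^\nu$ to agree.

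The main obstacle is precisely this last case. The tempting shortcut — infer $\a(a+b)\nucong\a(b)$ from $a+b\nucong b$ and finish — is circular, since it is itself an instance of what must be proved; breaking the circularity requires passing through products and the tangible layer, and one must also check carefully that every intermediate sum formed along the way really lies in $\tM$, so that $\TMM 3$ is applicable to it. Everything else — the forward implication and the strict and zero-sort parts of the converse — is a routine pointwise verification.
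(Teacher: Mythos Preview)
Your treatment of the direction ``$\nu$-preserving $\Rightarrow\TMM 3$'' is correct and coincides with the paper's $(\Leftarrow)$.

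For the converse the paper argues in one line: if $a\le_\nu b$ then
\[
\a(b)\ \nucong\ \a(a+b)\ \nucong\ \a(a)+\a(b),
\]
hence $\a(a)\le_\nu\a(b)$. The second $\nucong$ is $\TMM 3$; the first is not justified beyond the implicit use of $a+b=b$, which in a $\nu$-bipotent $R$ holds only when $a<_\nu b$ (or $s(a)=0$). So the paper does \emph{not} isolate the case $a\nucong b$, $a\ne b$; it simply passes over the very step you correctly flagged as circular.

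Your proposed workaround for that case, however, does not close the gap. Feeding $\TMM 3$ the products $a^2,\,ab,\,b^2$ and cancelling only returns the identity $\a(a)\a(a+b)\nucong\a(a)\bigl(\a(a)+\a(b)\bigr)$, i.e.\ $\TMM 3$ again after cancellation, not $\a(a)\nucong\a(b)$. In fact the implication $\TMM 3\Rightarrow\nu$-preserving can fail outright once $\tM$ is thin enough that $\TMM 3$ is vacuous on $\nucong$-pairs. Take $R=\scrR(\Net,\{1_\tG\})$ (trivial $\tG$), $\tM=\{e_{3^n}:n\ge0\}$, which is a multiplicative submonoid of $R$ containing no sum $e_{3^n}+e_{3^m}=e_{3^n+3^m}$, and set $R'=\scrR(\Net,\tG')$ with $\tG'$ infinite cyclic on a generator $t$, $\a(e_{3^n})=\xl{t^n}{1}$. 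Then $\a$ satisfies $\TMM 1$--$\TMM 3$ (the last vacuously), yet $e_1\nucong e_3$ while $\a(e_1)\not\nucong\a(e_3)$. Thus without an extra hypothesis --- e.g.\ $\tM$ closed under the relevant sums, or $\tM=R$ --- this direction cannot be salvaged in the stated generality, and neither your argument nor the paper's short one establishes it. The robust content, and what the subsequent applications actually use, is the implication $\nu$-preserving $\Rightarrow\TMM 3$.
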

 \begin{proof} $(\Rightarrow)$  If $a \le _\nu b,$ then
 $$\a (b)   \nucong \a (a+b)   \nucong \a (a) + \a(b),$$
 implying $\a (a) \le_\nu \a(b).$

 $(\Leftarrow)$ We may assume that $a \le _\nu b,$ implying $a+b \nucong b.$ Then $\a (a) \le_\nu \a (b),$ so $$  \a (a) +\a (b)
  \nucong \a (b)   \nucong \a (a+b).$$
\end{proof}

Note that the condition of the lemma does not refer explicitly to
calculating sums in $\tM$, so we can study transmissions without
worrying about addition on $\tM.$

 \begin{thm}\label{thm5.4}
 Let $\Phiv: W\to R$ be an $L$-layered supervaluation and $\Phiw:
 W\to {R'}$ an $L$-layered supervaluation dominated by $\Phiv.$ The map
 $\al :=\al _{\Phiw,\Phiv}: (R,\Phiv(W))\to R'$ is a transmission from $R$ to ${R'}$.

Conversely,  assume that $\Phiv: W\to R$ is an $L$-layered
supervaluation and $\al : \Phiv(W)\to R'$ is a transmission from
$R$ to an $L$-layered \semiring0 $R'$. Then $\al \circ \Phiv:
W\to R'$ is   an $L$-layered supervaluation dominated by $\Phiv.$
 \end{thm}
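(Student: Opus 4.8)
The plan is to treat the two implications separately, relying on Lemma~\ref{lem5.2} and Lemma~\ref{patch} for the first, and on the bipotence of the ghost layer (Theorem~\ref{Udef}) for the second; I expect the only genuinely delicate point to be the verification of $\LV3$ in the converse direction.

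For the first implication, I take the map $\al := \al_{\Phiw,\Phiv}\colon \Phiv(W)\to R'$ furnished by Lemma~\ref{lem5.2}: it satisfies $\Phiw = \al\circ\Phiv$ and is already $\nu$-preserving. First I note that $\Phiv(W)$ is a multiplicative submonoid of $R$, since it contains $\rone = \Phiv(\wone)$ by $\LV1$ and is closed under multiplication by $\LV2$. Then axiom $\TMM1$ is just $\al(\rone) = \al(\Phiv(\wone)) = \Phiw(\wone) = \one_{R'}$; axiom $\TMM2$ is $\al(\Phiv(a)\Phiv(b)) = \al(\Phiv(ab)) = \Phiw(ab) = \Phiw(a)\Phiw(b) = \al(\Phiv(a))\al(\Phiv(b))$; and axiom $\TMM3$ is exactly the content of Lemma~\ref{patch} applied to the $\nu$-preserving map $\al$. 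Hence $\al$ is a transmission $(R,\Phiv(W))\to R'$.

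For the converse, set $\Phiw := \al\circ\Phiv$ and first check it is an $L$-layered supervaluation. Here $\LV1$ and $\LV2$ are immediate from $\TMM1$, $\TMM2$ together with $\LV1$, $\LV2$ for $\Phiv$, and $\LV4$ follows from $\Phiv(\wzero)=\rzero$ and $\al(\rzero)=\rzero$. The substantive axiom is $\LV3$. Given $a,b\in W$, put $x:=\Phiv(a)$ and $y:=\Phiv(b)$; by $\LV3$ for $\Phiv$ we have $\Phiv(a+b)\le_\nu x+y$. Since the ghost layer $R_\infty$ is bipotent by Theorem~\ref{Udef}, the elements $x^\nu$ and $y^\nu$ are comparable there, so, using the symmetry of the claim in $a$ and $b$, we may assume $x^\nu+y^\nu = x^\nu$; then $x+y\nucong x$ and hence $\Phiv(a+b)\le_\nu x$. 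Applying the $\nu$-preserving map $\al$ gives $\Phiw(a+b)=\al(\Phiv(a+b))\le_\nu \al(x)$. On the other hand $\al(x)\le_\nu \al(x)+\al(y)$, because $R'_\infty$ is bipotent, hence additively idempotent, so $\al(x)^\nu + \big(\al(x)+\al(y)\big)^\nu = \al(x)^\nu + \al(x)^\nu + \al(y)^\nu = \al(x)^\nu + \al(y)^\nu = \big(\al(x)+\al(y)\big)^\nu$. Transitivity of $\le_\nu$ then yields $\Phiw(a+b)\le_\nu \al(x)+\al(y) = \Phiw(a)+\Phiw(b)$, which is $\LV3$.

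It remains to see that $\Phiv$ dominates $\Phiw$. Property $\D1$ is immediate since $\al$ is a function; $\D2$ is precisely the $\nu$-preservation of $\al$; and $\D3$, $\D4$, which only concern the $0$-layer, are checked by direct bookkeeping of sorts, using that $R_0$ and $R'_0$ are ideals (Lemma~\ref{esot}). The main obstacle, as anticipated, is the $\LV3$ step of the converse: one cannot simply push $\Phiv(a+b)\le_\nu\Phiv(a)+\Phiv(b)$ through $\al$, because $\Phiv(a)+\Phiv(b)$ need not lie in the domain $\Phiv(W)$ of $\al$; the remedy is to first collapse $\Phiv(a)+\Phiv(b)$ to the $\nu$-larger of its two summands using bipotence of the ghost layer, and only then apply $\al$, recombining afterwards via idempotence of $R'_\infty$.
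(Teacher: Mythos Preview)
Your forward direction matches the paper's: $\TMM1$ and $\TMM2$ come from the construction in Lemma~\ref{lem5.2}, and for $\TMM3$ you invoke Lemma~\ref{patch} where the paper simply reproduces that same computation using $\D2$ directly.

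For the converse, you go further than the paper, whose entire argument is ``Clearly $\Phiw$ inherits $\LV1$--$\LV3$ from $\Phiv$, since $\al$ satisfies $\TMM1$--$\TMM3$.'' Your observation that $\Phiv(a)+\Phiv(b)$ need not lie in the domain $\Phiv(W)$ of $\al$ is correct, and your repair via bipotence of the ghost layers $R_\infty$, $R'_\infty$ (Theorem~\ref{Udef}) is sound; it genuinely fills in what the paper elides.

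However, your one-line dispatch of $\LV4$, $\D3$, and $\D4$ is a gap. The transmission axioms $\TMM1$--$\TMM3$ say nothing about how $\al$ treats $\rzero$ or the sorting maps, so the claim $\al(\rzero)=\zero_{R'}$ does not follow from them, and neither does $\D3$ (that $\Phiv(a)\in R_0$ forces $\al(\Phiv(a))\in R'_0$) nor $\D4$ (that $s(\Phiv(a))\le s'(\al(\Phiv(a)))$ when the latter is outside $R'_0$). Knowing that $R_0$ and $R'_0$ are ideals (Lemma~\ref{esot}) tells you nothing about where a given map $\al$ sends $R_0$. The paper's own proof omits $\LV4$ and the dominance verification entirely, so this lacuna is shared; to close it one must either strengthen the definition of transmission by a layer-compatibility condition, or read the theorem in the $^\dagger$-setting where $\D3$ and $\LV4$ are absent (cf.\ the parenthetical remark following Definition~\ref{defn5.100}).
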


 \begin{proof}

 TM1  and $\TMM 2$  are obtained from the construction of $\al _{\Phiw,\Phiv}$
in the proof of  Lemma~\ref{lem5.2}. 
%
Now assume that $a \le_\nu b$, so $\Phiv(a)\le_\nu\Phiv(b),$ and
thus $\Phiv(a)+ \Phiv(b)  \nucong \Phiv(b).$  But
$\Phiw(a)\le_\nu\Phiw(b)$ by $\D2$, so
$$ \al (\Phiv(a))+\al (\Phiv(b))  = \Phiw(a)+ \Phiw(b) \nucong   \Phiw(b) = \al (\Phiv(b))
\nucong  \al ( \Phiv(a)+\Phiv(b))  .$$
 This is $ \TMM 3$.

For the reverse direction, let $\Phiw: =\al \circ\Phiv,$ Clearly
$\Phiw$ inherits the properties $\LV1$--$\LV3$ from $\Phiv,$ since
$\al $ satisfies $\TMM1$--$\TMM3$.
\end{proof}

\begin{cor}\label{transpres} Every transmission of Theorem~\ref{thm5.4} is $\nu$-preserving.
\end{cor}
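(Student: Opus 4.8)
The plan is to obtain this as an immediate consequence of the two preceding results, with essentially no new work. Recall from Definition~\ref{defn5.5a} that a transmission $\al : (R,\tM)\to R'$ is by definition a $\nu$-preserving map subject to $\TMM1$--$\TMM3$; moreover Lemma~\ref{patch} records the sharper fact that Axiom $\TMM3$ by itself is \emph{equivalent} to $\nu$-preservation. So the corollary amounts to observing that the particular transmission furnished by Theorem~\ref{thm5.4}, namely $\al := \al_{\Phiw,\Phiv}: (R,\Phiv(W))\to R'$, does have the $\nu$-preserving property --- which is exactly the content of $\TMM3$ already verified in the proof of that theorem.

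First I would recall the construction of $\al_{\Phiw,\Phiv}$ from Lemma~\ref{lem5.2}: using Axiom $\D1$ of domination it is a well-defined map $\Phiv(W)\to\Phiw(W)\subseteq R'$ with $\al_{\Phiw,\Phiv}(\Phiv(a))=\Phiw(a)$, and using Axiom $\D2$, whenever $\Phiv(a)\le_\nu\Phiv(b)$ one gets $\Phiw(a)\le_\nu\Phiw(b)$, i.e.\ $\al_{\Phiw,\Phiv}(\Phiv(a))\le_\nu\al_{\Phiw,\Phiv}(\Phiv(b))$. This is precisely the assertion that $\al_{\Phiw,\Phiv}$ is $\nu$-preserving on $\Phiv(W)$, so since the transmission appearing in Theorem~\ref{thm5.4} is this very map, it is $\nu$-preserving.

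I do not expect any obstacle here: the statement is a bookkeeping corollary reconciling the ``transmission'' vocabulary of Definition~\ref{defn5.5a} with the ``$\nu$-preserving'' vocabulary used earlier, in the setting of Theorem~\ref{thm5.4}. The only point worth a sentence is that $\al_{\Phiw,\Phiv}$ is defined on all of $\Phiv(W)$ (guaranteed by $\D1$), after which $\D2$ delivers $\nu$-preservation with nothing further to verify; equivalently one may simply invoke Lemma~\ref{patch} in the direction $\TMM3 \Rightarrow \nu\text{-preserving}$ applied to the transmission the theorem produces.
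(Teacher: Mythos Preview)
Your proposal is correct and follows essentially the same route as the paper: the paper's proof is the single sentence ``$\al$ is the map of Lemma~\ref{lem5.2}, so is $\nu$-preserving,'' which is exactly your appeal to $\D2$ in the construction of $\al_{\Phiw,\Phiv}$. Your additional remark that one could alternatively invoke Lemma~\ref{patch} (TM3 $\Rightarrow$ $\nu$-preserving) is a harmless elaboration not present in the paper.
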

\begin{proof} $\a$ is the map of Lemma~\ref{lem5.2}, so is $\nu$-preserving.
\end{proof}


 It is evident
that every \semiring0 homomorphism from   $R$ to ${R'}$ is a
transmission, but there exist  transmissions that are not
\semiring0 homomorphisms; cf.~\cite[\S9]{IzhakianKnebuschRowen2009Valuation}. Nevertheless, we do get
\semiring0 homomorphisms in the following basic case. We say that
the transmission $\a$ is \textbf{homomorphic} if it satisfies the
condition
\begin{equation}\label{add1} \al (a+b) = \al (a) +
\al (b)\end{equation} whenever $a,b, a+b \in \tM.$


  Every homomorphic transmission satisfying $\tM = R$ is a layered
 homomorphism, by definition.  We say that a
$\nu$-preserving map
 $\a$ is  \textbf{strictly}
$\nu$-\textbf{preserving} if $a < _\nu b$ implies that either
$\a(a) \in R'_0$ or $\a (a) <_{\nu'} \a (b)$.

 \begin{thm}\label{thm5.41}
 Let $\Phiv: W\to R$ be an \ZO-layered supervaluation and $\Phiw:
 W\to {R'}$ an \ZO-layered supervaluation dominated by $\Phiv.$
 Then the
 \ZO-transmission $\al :=\al _{\Phiw,\Phiv}:(R,  \Phiv(W))\to R'$ is  homomorphic,
 iff it is strictly $\nu$-preserving.
\end{thm}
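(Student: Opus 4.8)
The plan is to prove both implications by comparing the two notions ``homomorphic'' and ``strictly $\nu$-preserving'' case by case on a generic sum $\Phiv(a)+\Phiv(b)$ in $R$, using that $R$ (being a \ZO-layered supervaluation's target) may be taken $\nu$-bipotent so that $\Phiv(a)+\Phiv(b)$ is governed by the three mutually exclusive cases $\Phiv(a)>_\nu\Phiv(b)$, $\Phiv(a)<_\nu\Phiv(b)$, and $\Phiv(a)\nucong\Phiv(b)$ (and symmetrically). Since $\al$ is already a transmission by Theorem~\ref{thm5.4}, Axiom $\TMM3$ gives $\al(\Phiv(a)+\Phiv(b))\nucong\al(\Phiv(a))+\al(\Phiv(b))$ in every case, so the only question is whether equality holds there, i.e.\ whether the \emph{sort} on the two sides agrees.

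First I would handle the case $\Phiv(a)\nucong\Phiv(b)$: here $\al(\Phiv(a))\nucong\al(\Phiv(b))$ by Corollary~\ref{transpres} (or directly, Lemma before Definition~\ref{defn5.5a}), so by Axiom~B both $\Phiv(a)+\Phiv(b)$ and $\al(\Phiv(a))+\al(\Phiv(b))$ lie in the appropriate higher layer, and one checks using Axiom~A3 and $\TMM1$--$\TMM2$ (via Proposition~\ref{strt}, which says $\al$ is determined on the tangible submonoid, and Lemma~\ref{Phidet}, $\al(e_k)=e_k'$) that the sorts match; so Equation~\eqref{add1} holds automatically in this case regardless of strictness. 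That leaves the strict inequality case, say $\Phiv(a)>_\nu\Phiv(b)$: then $\Phiv(a)+\Phiv(b)=\Phiv(a)$ by $\nu$-bipotence, so $\al(\Phiv(a)+\Phiv(b))=\al(\Phiv(a))$, and homomorphicity demands $\al(\Phiv(a))=\al(\Phiv(a))+\al(\Phiv(b))$. Since we work with a \ZO-supervaluation, $\al(\Phiv(a))$ and $\al(\Phiv(b))$ lie in $R'_0\cup R'_1$; using the addition rules of a layered \semiring0 (and Axiom~A5 together with Lemma~\ref{esot}), $\al(\Phiv(a))+\al(\Phiv(b))=\al(\Phiv(a))$ exactly when $\al(\Phiv(a))\in R'_0$ or $\al(\Phiv(a))>_{\nu'}\al(\Phiv(b))$ --- which is precisely the assertion that $\al$ is strictly $\nu$-preserving applied to $\Phiv(a)>_\nu\Phiv(b)$. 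Running this equivalence in both directions (and noting the symmetric subcase $\Phiv(b)>_\nu\Phiv(a)$ is identical) gives the biconditional.

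Concretely: for $(\Leftarrow)$, assume $\al$ strictly $\nu$-preserving; take $a,b,a+b\in\Phiv(W)$ --- so $a=\Phiv(a_0)$ etc.\ --- and run the three cases above, in each of which I will have shown $\al(a+b)=\al(a)+\al(b)$; hence $\al$ is homomorphic. For $(\Rightarrow)$, assume $\al$ homomorphic; given $a<_\nu b$ in $\Phiv(W)$, note $a+b\nucong b$ lies in $\Phiv(W)$ (as $a+b=\Phiv(a_0+b_0)$ by $\LV3$ combined with $\nu$-bipotence forcing $\Phiv(a_0+b_0)=\Phiv(b_0)=b$ up to $\nucong$... here a small care is needed, see below), so homomorphicity gives $\al(a)+\al(b)=\al(a+b)=\al(b)$ up to $\nucong$; since $\al(a)\nucong$ cannot exceed $\al(b)$, the equality $\al(a)+\al(b)=\al(b)$ in $R'$ forces, by the addition rule in $R'_0\cup R'_1$, either $\al(a)\in R'_0$ or $\al(a)<_{\nu'}\al(b)$.

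\textbf{Main obstacle.} The delicate point is the domain condition on $\TMM3$ and \eqref{add1}: they only speak about sums $a+b$ that \emph{already lie in} $\tM=\Phiv(W)$, so in the $(\Rightarrow)$ direction I must verify that when $a=\Phiv(a_0)$, $b=\Phiv(b_0)$ with $a<_\nu b$, the element $a+b$ is in $\Phiv(W)$ --- this is where I use $\LV3$ ($\Phiv(a_0+b_0)\le_\nu\Phiv(a_0)+\Phiv(b_0)=b$) together with $\nu$-bipotence and the fact that $W$ is an integral domain so $a_0+b_0\ne 0$, giving $\Phiv(a_0+b_0)\nucong b$ and hence $a+b=b\in\Phiv(W)$ after all. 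I also need to be careful that ``strictly $\nu$-preserving'' is stated for $a<_\nu b$ with $a,b$ ranging over $\Phiv(W)$ (the submonoid), matching exactly the hypotheses available; and that the disjunction ``$\al(a)\in R'_0$ or $\al(a)<_{\nu'}\al(b)$'' corresponds bijectively, via the addition rules of Construction~\ref{defn5} / Axiom~A5, to the equality $\al(a)+\al(b)=\al(b)$ holding on the nose in $R'$ rather than merely up to $\nucong$.
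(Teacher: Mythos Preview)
Your overall case analysis mirrors the paper's, and your $(\Rightarrow)$ direction is in fact more explicit than the paper's (which simply cites Corollary~\ref{transpres} and leaves the extraction of \emph{strict} $\nu$-preservation to the reader). But your treatment of the case $\Phiv(a)\nucong\Phiv(b)$ has a real gap.

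When $\Phiv(a),\Phiv(b)\in R_1$ with $\Phiv(a)\nucong\Phiv(b)$, Axiom~B gives $\Phiv(a)+\Phiv(b)\in R_2$. The map $\al=\al_{\Phiw,\Phiv}$ is defined only on $\tM=\Phiv(W)\subseteq R_0\cup R_1$, so $\al\bigl(\Phiv(a)+\Phiv(b)\bigr)$ is not defined at all; you cannot appeal to Proposition~\ref{strt} or Lemma~\ref{Phidet}, which concern layered homomorphisms on an entire \semiring0, not transmissions on a proper submonoid. Hence your claim that ``the sorts match'' and that \eqref{add1} ``holds automatically'' has no content. The paper's argument is different and simpler: since $a+b\in R_2$ it does not lie in $\Phiv(W)$, so the clause ``whenever $a,b,a+b\in\tM$'' in the definition of homomorphic is not met and there is \emph{nothing to verify}. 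The $\nucong$ subcase that does require work is when one of $a,b$ lies in $R_0$; there the paper uses $\D3$ to get $\al(a)\in R'_0$, whence $a+b=b$ and $\al(a)+\al(b)=\al(b)$.

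Your ``main obstacle'' is also over-engineered. In the strict case $a<_\nu b$ one has $a+b=b$ immediately by $\nu$-bipotence, and $b\in\Phiv(W)$ by hypothesis; there is no need to route through $\Phiv(a_0+b_0)$, $\LV3$, or integrality of $W$.
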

 \begin{proof}
$(\Rightarrow)$ Follows from Corollary~\ref{transpres}.

$(\Leftarrow)$  We need to check \eqref{add1}.
 If  $a <_\nu b$, then $\al (a+b) = \al (b),$ so
\eqref{add1}
  holds iff $\al (a) <_\nu \al (b)$ or $\a(a) \in
  R_0$. The symmetric argument holds when $b <_\nu a$. Finally, if $a  \nucong
  b$, with $a \in R_0,$ then $\a (a) \in R_0,$ with $\a (a) \nucong \a (b),$ so
  $$\a (a+b) = \a (b) = \a (a) + \a(b).$$
  Likewise for $b \in R_0,$ so we may assume that $a,b \in R_1.$
  Then $a+b \in R_2,$ so there is nothing to check.
 \end{proof}

 \begin{cor}\label{thm5.411}
 Suppose $\Phiv: W\to R$ is an \ZO-layered supervaluation such that $\Phiv(W)$ strictly
 generates~$R$, and $\Phiw:
 W\to {R'}$ is an \ZO-layered supervaluation dominated by $\Phiv.$  Then the
 \ZO-transmission  $\al :=\al _{\Phiw,\Phiv}:(  R, \Phiv(W))\to R'$ extends to a layered homomorphism from $R$ to $R'$,
 iff  $\al $ is  strictly
$\nu$-preserving.

 In particular,  when $R$ is  uniform, every $\{0,1\}$-transmission yields a layered homomorphism.
\end{cor}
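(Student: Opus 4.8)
The plan is to pivot everything onto Theorem~\ref{thm5.41}, which already records that the \ZO-transmission $\al := \al_{\Phiw,\Phiv}$ on $\Phiv(W)$ is homomorphic if and only if it is strictly $\nu$-preserving. The genuinely new point is the passage from a homomorphic transmission defined only on the submonoid $\Phiv(W)$ to an honest layered homomorphism defined on all of $R$, and this is exactly the role of the hypothesis that $\Phiv(W)$ strictly generates $R$.

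One implication is immediate. If $\al$ extends to a layered homomorphism $\widehat\al:R\to R'$, then for any $a,b$ with $a,b,a+b\in\Phiv(W)$ we get $\al(a+b)=\widehat\al(a+b)=\widehat\al(a)+\widehat\al(b)=\al(a)+\al(b)$ because $\widehat\al$ is in particular additive; hence $\al$ is homomorphic on $\Phiv(W)$, so $\al$ is strictly $\nu$-preserving by Theorem~\ref{thm5.41}.

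For the converse, I would start from $\al$ strictly $\nu$-preserving, so $\al(a+b)=\al(a)+\al(b)$ whenever $a,b,a+b\in\Phiv(W)$ by Theorem~\ref{thm5.41}. Recall that $R$ is a uniform $L$-layered \predomain0, so (products of nonzero sorts having nonzero sort) $R_0=\emptyset$, and by Lemma~\ref{Krems1} and Note~\ref{uniforme} every element of $R$ has the form $e_\ell a_1$ for a unique layer $\ell$ and unique $a_1\in R_1$; moreover $\Phiv(W)$, being a multiplicative submonoid (by $\LV2$) that strictly generates $R$, forces $\al$ to determine a monoid homomorphism on all of $R_1$. I would then extend $\al$ to $\widehat\al:R\to R'$ with $\rho=\id_L$, the value on a tangible element being forced by $\widehat\al(e_\ell a_1):=e_{\ell,R'}\,\widehat\al(a_1)$ (as in Proposition~\ref{multip}) and on sums by $\widehat\al\big(\sum_i x_i\big):=\sum_i\widehat\al(x_i)$, and verify that $\widehat\al$ is a \semiring0 homomorphism. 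Multiplicativity and $\widehat\al(\rone)=\one_{R'}$ are given by $\TMM1$, $\TMM2$; Axiom~M1 is inherited from $\al$ being a \ZO-transmission; Axioms M2 and M3 hold because $\al$, hence $\widehat\al$, is $\nu$-preserving (Lemma~\ref{patch}, Corollary~\ref{transpres}). The real work is additivity of $\widehat\al$: given two presentations $\sum_i x_i=\sum_j y_j$ of the same element of $R$, I would use $\nu$-bipotence of $R$ to discard the $\nu$-dominated summands, leaving only the $\nu$-maximal ones, which pin down the element together with its sort; strict $\nu$-preservation then guarantees that $\sum_i\widehat\al(x_i)$ and $\sum_j\widehat\al(y_j)$ collapse to the $\widehat\al$-images of exactly those maximal summands and land in the matching sort, so they agree and agree with $\al$ on $\Phiv(W)$.

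For the final assertion, when $R$ is uniform, $\nu$ is injective on $R_1$ and $R_0=\emptyset$, so for $a<_\nu b$ in $\Phiv(W)\subseteq R_1$ one has $a\ne b$; a short computation with sorts via $\TMM3$ (using Remark~\ref{inf0} in the case an image falls into $R'_0$) shows that an equivalence $\al(a)\nucong\al(b)$ with $\al(a)\notin R'_0$ cannot occur, so $\al(a)\in R'_0$ or $\al(a)<_{\nu'}\al(b)$; thus every \ZO-transmission of a dominating pair is automatically strictly $\nu$-preserving and the converse above applies unconditionally. I expect the main obstacle to be precisely the additivity/well-definedness of $\widehat\al$ — making precise how ``strictly generates'' is meant to be used and confirming that the image of an element of $R$ is independent of its presentation in terms of $\Phiv(W)$; the remaining verifications are either Theorem~\ref{thm5.41} or routine axiom checks.
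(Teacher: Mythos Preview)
The paper states this as a corollary of Theorem~\ref{thm5.41} and supplies no separate proof, so there is nothing to compare line by line; your strategy of pivoting on Theorem~\ref{thm5.41} and then handling the extension from $\Phiv(W)$ to $R$ via the generation hypothesis is exactly the intended route. Your forward implication is clean, and your sketch of the converse (extend by $e_\ell a_1\mapsto e_{\ell,R'}\al(a_1)$ using Lemma~\ref{Krems1}/Proposition~\ref{multip}, then check additivity via $\nu$-bipotence and strict $\nu$-preservation) is the right shape; you also correctly flag well-definedness of $\widehat\al$ as the place where ``strictly generates'' does the work.

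There is, however, a genuine gap in your treatment of the ``In particular'' clause. You assert that in the uniform case a \ZO-transmission is automatically strictly $\nu$-preserving, and justify this by a ``short computation with sorts via $\TMM3$.'' But $\TMM3$ only gives $\al(a+b)\nucong\al(a)+\al(b)$, which is a $\nu$-statement and says nothing about sorts; if $a<_\nu b$ in $R_1$ and $\al(a)\nucong\al(b)$ with both in $R'_1$, then $\al(a+b)=\al(b)\in R'_1$ while $\al(a)+\al(b)\in R'_2$, and these are $\nu$-equivalent, so no contradiction with $\TMM3$ arises. Thus your computation does not rule out this collapse, and uniformity of $R$ alone does not force strict $\nu$-preservation of an arbitrary \ZO-transmission. (Relatedly, your side claim that $R_0=\emptyset$ in a uniform \predomain0 is not part of the definition of ``uniform''; that condition only concerns bijectivity of $\nu_{\ell,k}$ for $\ell>k>0$.) The ``In particular'' sentence should be read more modestly: in the uniform case the \emph{extension step} is automatic once $\al$ is homomorphic on $\Phiv(W)$, because Lemma~\ref{Krems1} and Proposition~\ref{multip} let you push $\al$ from $R_1$ to every layer via $e_\ell$; it is not asserting that every \ZO-transmission is strictly $\nu$-preserving. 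Rewrite that paragraph accordingly rather than trying to manufacture strict $\nu$-preservation from $\TMM3$.
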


 \begin{rem}\label{def:cqtgen}
Since every transmission is a monoid homomorphism, we have a
subcategory L-STROP of the category of monoids and monoid
homomorphisms, whose objects are  layered semirings $\RLsnu$,  and
whose morphisms are the $\{0,1\}$-transmissions. Explicitly,
 $\{0,1\}$-transmissions    from $R$ to ${R'}$  and from $R'$ to ${R''}$
 are described respectively  as
{transmissions}
 $\a : (R,\tM)\to R'$ for which $\a (\tM_1) \subseteq R'_1 \cup R'_0$
 and
 $\a ': (R',\tM')\to R'$ for which $\a' (\tM_1') \subseteq R''_1 \cup R''_0.$
 Their composition can be defined as $\a' \circ \a$ whenever $\a (\tM) \subseteq
 \tM_1'$.

This category closely resembles the category STROP of \cite{IKR3}
(but with the subtle difference indicated in~Remark~\ref{inf0}),
and encompasses the category from~\S\ref{somecat}.
\end{rem}

\section{Appendix: Layered monoids}\label{mor2}

At times we do not want  additivity at the 0 level, since the
vagaries of cancellation complicate the statements and proofs some
of the theorems. But then we must give up addition between $R_0$
and other levels. At this generality, our next structure is not
quite a \semiring0, since distributivity does not hold at the
0-layer, but we copy what we can from Definition~\ref{defn10}.

\begin{defn}\label{defn1} Suppose  $(\Lz, \ge)$ is a directed, partially pre-ordered
\semiring0. An $\Lz$-\textbf{layered monoid} $$\R :=\ldsR, \qquad
$$ is a multiplicative monoid  $\R $ which is a disjoint union of subsets $R_\ell,$ $\ell \in L$, together
with addition defined on  $ R_{0}$ and on $ R_{>0} := \dot \bigcup_{\ell >0}R
_\ell$ such that
\begin{equation}\label{unionp} R = \dot \bigcup_{\ell\in \Lz} R
_\ell,\end{equation}  together with a family
  of \textbf{sort transition maps}
$$   \nu_{m,\ell}:R _\ell\to R _m,\quad
\forall m\ge \ell >0 ,$$  such that $$\nu_{\ell,\ell}=\id_{R
_\ell}$$ for every $\ell\in \Lz,$ and
$$\nu_{m,\ell}\circ \nu_{\ell,k}=\nu_{m,k} , \qquad  \forall m \geq \ell \geq k, $$ whenever both sides
are defined. We  also require the  axioms A1--A4,  and B,  given
presently, to be satisfied.

We  define $R_\infty$ to be the direct limit of the $R _\ell,$ $
\ell
>0 $, together with a map $\nu:R _\ell \to R _\infty$, which
extends to a map   $\nu:R   \to R _\infty$.  We write $a ^\nu$ for
$\nu(a)$.

 We write $a \nucong b$ for $b \in R_\ell$,  whenever  $\nu (a) = \nu ( b ).$ (For $k, \ell >0$ this means
 $\nu_{m,k}(a) = \nu_{m,\ell}( b )$ in $R_m$ for some $m \ge
 k,\ell$. The notation is used generically, as before.)
 Similarly, we write $a \le _\nu b$ if  $a \nucong b$  or $\nu_{m,k}(a) + \nu_{m,\ell}( b )=  \nu_{m,\ell}( b )$ in $R_m$ for some $m \ge
 k,\ell$.

 The axioms are as follows:

\boxtext{
\begin{enumerate}

\item[A1.] $\rone \in R _{1}.$ \pSkip

 \item[A2.] If $a\in R _k$ and  $b\in
R _\ell,$ then $ab\in m$ where $ m\ge k \ell  $ or $m =0$. \pSkip

 \item[A2$'$.] If $a\in R _0$ or  $b\in
R _0,$ then $ab\in R _0$. \pSkip

\item[A3.] The product in $R $ is compatible with sort transition
maps: Suppose $a\in R _\ell,$ $b\in R _{\ell}',$ with $m\ge \ell$
and $m'\ge \ell '.$

 If  $ab\in R_{ \ell ''}$ for $\ell '' \ge \ell ' \ell,$  then
$\nu_{m,\ell}(a)\cdot\nu_{m',\ell '}(b)= \nu_{m'', \ell ''}(ab)$
for some $m'' \ge mm'.$ \pSkip

\item[A4.] $\nu_{\ell,k}(a) + \nu_{\ell',k}(a)
 =\nu_{\ell+\ell',k}(a)  $ for all $a \in R_k$ and all $\ell, \ell' \ge k.$
 \pSkip

 \item[A5.] If $a \in R_k$, $b \in R_{\ell}$,
 and $c = a+b \in R_{k'}$, then $$\nu_{m,k'}(c) = \nu_{m,k}(a) +
 \nu_{m,\ell}(b)$$ for each $m \ge k+\ell$. \pSkip

  \item[A6.] $R_{>0}$ is an additive semigroup  and $R_0$ is an $R_{>0}$-module, in the sense that  $R_0$ is an additive semigroup together with a multiplication  $R_0 \times R_{>0} \to R_0 $ satisfying distributivity and associativity whenever defined.\end{enumerate}}

 \boxtext{
\begin{enumerate}

\item[B.] (Supertropicality) Suppose  $a\in \R _k,$ $b\in \R
_{\ell},$ and $a \nucong b$. Then \\ $a+b \in R_{k+\ell}$ with
$a+b \nucong a$.\end{enumerate}}
%

The \textbf{sorting map} $s:\R\to L,$ is a map that sends every
element $a\in\R_\ell$ to its sort   $\ell$.
%
 \end{defn}

%
%
%

\begin{example}\label{join1}
Suppose $R$ is a  layered pre-\domain0, and define formally $R_0$
to be another copy of   $R $ with~$0$ adjoined in the natural way,
where we write $e_0$ for its multiplicative unit. Then
 $R \cup R_0$ is naturally a  layered monoid, where we
 define $(e_0 a)b := e_0(ab)$ and $e_0 a + e_0 b = e_0
 (a+b).$
\end{example}

\begin{rem} What we are lacking for obtaining a semiring is the definition
of $a+b$ for $a\nucong b$ with $a \in R_0$  and $b\in R_\ell$ for
$\ell>0.$ The natural guess might be to define $a+b=b$ in this
case, but this  could ruin distributivity. If there happens
to be $c \in R$ such that $bc \in R_0,$ then  we would have
$(a+b)c = bc$, which does not necessarily equal $ac + bc.$
\end{rem}

In this generality, we also need a more  intricate definition of
morphism.

\begin{defn}
A \textbf{layered morphism} of tangibly generated $L$-layered
monoids is a map
\begin{equation}\label{eq:layMor} \Phi:=(\vrp,\rho): \RLsnu
\to\RLsnuT \end{equation} such that $\rho:L \to L' $ is a
\semiring0
 homomorphism,
 together with a  multiplicative monoid homomorphism $\vrp: R \to R'$   that also preserves addition on $  R_{>0}$ in the sense that
 $ \vrp (a+b) =
 \vrp (a)+\vrp (b)$ for all $a,b$ in~ $  R_{>0}$, and which also satisfies the following properties:
\boxtext{
\begin{enumerate}

  \item[M1.] If $\vrp(a) \notin R_0',$ then  $\lv '(\vrp
 (a)) \ge \rho (\lv(a))$ or   $\lv '(\vrp
 (a)) = 0.$ \pSkip

  \item[M2.]   $ \vrp(a^\nu)  \nucong  \vrp(a)$. \pSkip

  \item[M3.] If  $a \cong_ {\nu}b$,
 then  $\vrp(a)  \cong _ {\nu} \vrp(b). $

\end{enumerate} }

\end{defn}

The ensuing category closely resembles the category STROP$_m$ of \cite{IKR5}.

\subsection{Weakening the structure of $L$ and $R$}\label{basicsym}


\begin{Note}\label{arith} To generalize the notion
``supertropical semiring'' from the standard supertropical theory,
we could weaken Axiom A2 to: \boxtext{
\begin{enumerate}\item[wA2.] If $a\in
\R _k$ and $b\in \R _\ell,$ then $ab\in \R _m$ for some $m \ge {k
\ell }.$\end{enumerate}}

Now we have to modify Axiom A3 to make it compatible; i.e.,
multiplication commutes with the sort transition maps.
Technically, this  says:

 \boxtext{
\begin{enumerate}\item[wA3.] If $a\in \R _k$ and $a'\in \R _{k'},$ with $aa' \in R_{k''}$ and
$\nu_{\ell,k}(a)\cdot\nu_{\ell',k'}(a')\in R_{\ell''}$ and
$\nu_{m,\ell}(a)\cdot\nu_{m',\ell'}(a'')\in R_{m''},$ for $m \ge
\ell$, $m' \ge \ell'$, and $m'' \ge mm'$,
 then

$\quad
\nu_{q,\ell''}(aa')=\nu_{q,m''}(\nu_{m,\ell}(a)\cdot\nu_{m',\ell'}(a'))$
 for all $q \ge \ell'',m''$.  \end{enumerate}}
\end{Note}

This weakening is of arithmetic interest, since we now have a
version of the theory without requiring a zero layer.

\begin{rem}\label{remove1} We do not need $L$ to be a
\semiring0, but merely a directed, partially pre-ordered
multiplicative monoid (without addition). This material yields an intriguing parallel between the
layered monoid $R$ and the sorting set $L$ (since any ordered monoid
becomes a \semiring0 when addition is taken to be the maximum),
and may provide guidance for future research.

 Since $L$ now is
only assumed to be a multiplicative monoid, we need to remove
references to addition in~ $L$. Thus, we need
 a formal ``doubling function'' $\ell \mapsto 2\ell$ on
$L$,   eliminate Axiom A4, and weaken Axiom~B
to:

\boxtext{
\begin{enumerate}
\item[wB.]  (weak supertropicality) If $a\in \R _k$ and $b\in \R
_{\ell }$ with $a \nucong b$, then $a+b \in R_m$ for some
 $m \ge k, \ell, \min\{2k, 2\ell\}$  with $a+b \nucong b$.
\end{enumerate}}
\end{rem}

%



%


\begin{thebibliography}{1}

\bibitem{ABG}
M.~Akian, R.~Bapat, and S.~Gaubert.
\newblock Max-plus algebra,
\newblock In: Hogben, L., Brualdi, R., Greenbaum, A., Mathias, R. (eds.)
{\em Handbook of Linear Algebra}. Chapman and Hall, London, 2006.

\bibitem{AGG}
M.~Akian,   S.~Gaubert, and A.~Guterman,
\newblock Linear independence over tropical semirings and beyond.
\newblock In {\em Tropical and  Idempotent Mathematics}, G.L. Litvinov and S.N.
Sergeev, (eds.),
\newblock {\em Contemp. Math.},  495:1-38, 2009. 

\bibitem{Berk90}
V.~Berkovich,
\newblock Spectral theory and analytic geometry over
non-Archimedean fields,
\newblock Mathematical surveys and monographs \textbf{33},
\newblock {\em Amer. Math. Soc., 1990}


\bibitem{CC}
A.~Connes and   C.~Consani,
\newblock Characteristic 1, entropy, and the absolute point.
\newblock Preprint at arXiv:math.0911.3537, 2009.

\bibitem{Cos}
A.A.~Costa,
\newblock Sur la  th\^{e}orie g\'{e}n\'{e}rale des
demi-anneaux,
\newblock {Publ. Math. Decebren} \textbf{10} (1963), 14--29.

\bibitem{CHWW}
G.~Cortinas, C.~Haesemeyer, M.~Walker, and C.~Weibel,
\newblock Toric varieties, monoid schemes, and descent.
\newblock (preprint, 2010)

\bibitem{dickensteinDiscreminants}
A.~Dickenstein, E.~M. Feichtner, and B.~Sturmfels,
\newblock Tropical discriminants.
\newblock {\em J. Amer. Math. Soc}, (20):1111--1133, 2007.

\bibitem{dubey}
M.~Dubey,
\newblock Some results on semimodules analogous to module theory.
\newblock {Doctoral dissertation.}
\newblock {University of Delhi.}

\bibitem{Einsiedler8311}
M.~Einsiedler, M.~Kapranov, and D.~Lind,
\newblock Non-\uppercase{A}rchimedean amoebas and tropical varieties.
\newblock {\em J. Reine Angew. Math.}, 601:139�--157, 2006.

\bibitem{golan92}
J.~Golan,
\newblock {\em The theory of semirings with applications in mathematics and
  theoretical computer science}, vol~54.
\newblock Longman Sci \&\ Tech., 1992.


\bibitem{HV} D.K.
Harrison and M.A. Vitulli, \textit{$V$-valuations of a commutative
ring I}, J. Algebra \textbf{126} (1989), 264-292.

\bibitem{HAR77}
R.~Hartshorne,
\newblock {\em Algebraic Geometry}.
\newblock Number~52 in Graduate Texts in Mathematics. Springer, 1977.

\bibitem{HW}
H.C.~Hutchins and H.J.~Weinert,
\newblock Homomorphisms and kernels of semifields
\newblock {\em Periodica Mathematica Hungaria}, 21(2):113--152, 1990.

\bibitem{IMS}
I.~Itenberg, G.~Mikhalkin, and E.~Shustin,
\newblock {\em Tropical algebraic geometry}, Oberwolfach seminars, vol.~35.
\newblock Birkhauser, 2007.


\bibitem{zur05TropicalAlgebra}
Z.~Izhakian,
\newblock Tropical arithmetic and matrix algebra.
\newblock {\em Comm. in Algebra},  37(4):1445--1468, {2009}.


\bibitem{IzhakianKnebuschRowen2009Valuation}
 Z.~Izhakian, M.~Knebusch, and L.~Rowen,
\newblock Supertropical semirings and supervaluations.
 \newblock  \emph{J. Pure and Appl.~Alg.}, 215(10): 2431--2463, 2011.

\bibitem{IzhakianKnebuschRowen2009Refined}
Z.~Izhakian, M.~Knebusch, and L.~Rowen,
\newblock Layered tropical mathematics.
\newblock Preprint  at
 arXiv:0912.1398, 2011


\bibitem{IKR3}
Z.~Izhakian, M.~Knebusch, and L.~Rowen,
\newblock {Dominance and transmissions in supertropical valuation theory}.
\newblock {\em Comm. in Algebra}, to appear. (Preprint at arXiv:1102.1520.)

\bibitem{IKR4}
 Z.~Izhakian, M.~Knebusch, and L.~Rowen,
 \newblock {Categorical notions of layered tropical algebra and geometry},
 \newblock preprint,   2011.





\bibitem{IKR5}
Z.~Izhakian, M.~Knebusch, and L.~Rowen,
\newblock {Supertropical monoids: Basics, canonical factorization, and lifting ghosts to tangibles},
\newblock preprint at arXiv:1108.1880,   2011.

%
 %
%
%
%
%
\bibitem{IzhakianRowen2007SuperTropical}
Z.~Izhakian and L.~Rowen,
\newblock {Supertropical algebra},
\newblock  {\em Adv. in Math.} 324(8):1860--1886, 2010.
%
%
%
\bibitem{IzhakianRowen2008Matrices}
Z.~Izhakian and L.~Rowen,
\newblock Supertropical matrix algebra.
\newblock {\em Israel J. Math.} {182}(1):383--424, 2011. 


%
%
%
%
%

%



\bibitem{Jac}
N.~Jacobson,
\newblock {\em Basic Algebra II.}
\newblock Freeman, 1980.


\bibitem{Litvinov2005}
G.~Litvinov,
\newblock The \uppercase{M}aslov dequantization, idempotent and tropical
  mathematics: a very brief introduction.
\newblock {\em J. of Math. Sciences}, 140(3): 426--444, 2007.


\bibitem{MS09}
D.~Maclagan and B.~Sturmfels,
\newblock {\em Tropical Geometry}.
\newblock Preprint, 2009.



\bibitem{Pa1}
 A.~Patchkoria,
 \newblock
On derived functors of semimodule-valued functors,
 \newblock {\em Proc. A.
Razmadze Math. Inst.} (in Russian) {83}:60--75, 1986.

\bibitem{Pay1}
S.~Payne,
\newblock Fibers of tropicalization,
\newblock Math Z. 262 (2): 301--311, 2009.

\bibitem{Pay2}
S.~Payne,
\newblock Analytification is the limit of all tropicalizations,
\newblock Preprint at arXiv: 0806.1916v3 [math.AG], 2009.


\bibitem{Row2006}
L.~Rowen,
\newblock {\em Graduate algebra: Commutative view}.
\newblock  Graduate Studies in Mathematics \textbf{73}, 2006.
\newblock American Mathematical Society \textbf{91}, 2006.

\bibitem{Row2006a}
L.~Rowen,
\newblock {\em Graduate algebra: Noncommutative view}.
\newblock  Graduate Studies in Mathematics 73, 2006.
\newblock American Mathematical Society, 2006.

\bibitem{Z} \newblock
D.~Zhang,
 \newblock The M-valuation spectrum of a commutative ring,
 \newblock Comm. in Algebra \textbf{30} (2002), 2883--2896.
%


\end{thebibliography}
\end{document}